\numberwithin{equation}{section}
\def\ker{\operatorname{Ker}}
\def\im{\operatorname{Im}}
\def\max{\operatorname{max}}
\def\Z{\mathbb{Z}}
\def\AA{\mathcal{A}}
\def\QQ{\mathcal{Q}}
\newcommand{\ep}{\varepsilon}
\newcommand{\fib}[4]{#1 \tensor[_{#2}]{\times}{_{#3}} #4}
\renewcommand{\phi}{\varphi}
\tikzstyle{vertex}=[circle]
\tikzstyle{goto}=[->,shorten >=1pt,>=stealth,semithick]
\newtheorem{thm}{Theorem}[section]
\newtheorem{lemma}[thm]{Lemma}
\theoremstyle{definition}
\newtheorem{definition}[thm]{Definition}
\theoremstyle{remark}
\newtheorem{remark}[thm]{Remark}
\newtheorem{example}[thm]{Example}
\newtheorem*{Acknowledgements}{Acknowledgements}
\begin{document}

\date{\today}
\title[Functorial properties of Putnam's homology theory for Smale spaces]{Functorial properties of Putnam's homology theory for Smale spaces}

\author{Robin J. Deeley}
\address{Robin J. Deeley,  Laboratorie de Mathematiques, Universite Blaise Pascal, Clermont-Ferrand II, France}
\email{robin.deeley@gmail.com}
\author[D. Brady Killough]{D. Brady Killough}
\address{D. Brady Killough, Mathematics, Physics and Engineering, Mount Royal University, Calgary, Alberta, Canada T3E 6K6}
\email{bkillough@mtroyal.ca}
\author[Michael F. Whittaker]{Michael F. Whittaker}
\address{Michael F. Whittaker, School of Mathematics and Applied Statistics, University of Wollongong, NSW 2522, Australia}
\email{mfwhittaker@gmail.com}

\thanks{This research was supported by the Natural Sciences and Engineering Research Council of Canada through a postdoctoral fellowship and the Australian Research Council (DP1096001).}

\subjclass[2010]{Primary {37D20}; Secondary {37B10}}

\begin{abstract}
We investigate functorial properties of Putnam's homology theory for Smale spaces. Our analysis shows that the addition of a conjugacy condition is necessary to ensure functoriality. 
Several examples are discussed that elucidate the need for our additional hypotheses. Our second main result is a natural generalization of Putnam's Pullback Lemma from shifts of finite type to non-wandering Smale spaces. 
\end{abstract}

\maketitle

\section{Introduction}

In this paper we consider functorial properties of the homology theory for Smale spaces introduced by Putnam in \cite{put}. The fundamental tool used to define Putnam's homology theory is an s/u-bijective pair. Putnam proves that the homology of a Smale space is independent of the choice of an s/u-bijective pair, provided a pair exists. One of Putnam's seminal results establishes that s/u-bijective pairs exist for every non-wandering Smale space \cite[Theorem 2.6.3]{put}, generalizing a celebrated result of R. Bowen \cite{Bow}. Putnam asserts that the homology theory is a covariant functor with respect to s-bijective maps and a  contravariant functor for u-bijective maps \cite[Theorem 5.4.1 and Theorem 5.4.2]{put}. 

Unfortunately, there is a minor mistake in the statement of Theorem 5.4.1 in \cite{put}: the fibre product maps are only assumed to be onto, but must be conjugacies in order that the proof in \cite{put} is valid. We discuss this issue in detail in Section \ref{modified3511}. In particular, we would like to emphasize that the s/u-bijective pair constructed in \cite[Theorem 5.4.2]{put} and the one used in \cite[Chapter 6]{put} (see Remark \ref{LF_rem}) satisfy our more stringent conjugacy condition. As such, this issue causes no problems with any of the other results in \cite{put}.

Our first main result is to show that Putnam's functoriality results hold under a more stringent (than considered in \cite[Theorem 5.4.1]{put}) class of s/u-bijective pairs that are used to define the induced map on homology from an s-bijective or u-bijective map; the proof is exactly the one given in \cite{put}. However, these results require us to prove that the definition of the induced map is natural with respect to the isomorphism considered in the proof of the independence of the homology on the choice of s/u-bijective pair (see \cite[Section 5.5]{put}). Subtleties arising from the interaction of s-bijective and u-bijective maps (and their induced maps on homology) are a prevailing theme of the proofs of this section and in fact the entire paper.

The second main result is a natural generalization of Theorem 3.5.11 in \cite{put} from the case of shifts of finite type with the dimension group to the case of Smale spaces with Putnam's homology theory. Since Putnam's homology is a generalization of the dimension group, the statement of this result is natural enough; the proof, on the other hand, is rather involved.

One might hope for even more general versions of these results. In Section \ref{examples}, we provide explicit examples showing that the most obvious generalizations of our results do not hold. Interestingly, we were able to construct all these examples using shifts of finite type. The heart of the matter seems to be the failure of \cite[Theorem 3.5.11]{put} to hold without adding assumptions beyond the commutativity of the diagram in the statement of the theorem. Despite all these negative results, Theorem \ref{ontoCloseOneToOne} implies that the (seemingly strong) hypotheses we use hold rather generally. All in all, Theorem 3.5.11 of \cite{put} and possible generalizations of it (along with the failure of possible generalizations) are the joint starting point for both our main results.

Many constructions involving Putnam's homology theory begin by fixing a particular choice of s/u-bijective pair, showing that a desired result holds for that particular pair, and then showing that it is independent of the choice of s/u-bijective pair or that (at least) there is a particular nice class of s/u-bijective pairs for which the result is independent of the choice. The proofs of our main results follow this general framework.

We now give the section by section content of this paper. Section \ref{preliminaries} introduces the fundamental definitions and results required in the sequel. In particular, we introduce Smale spaces and their basic properties with specific focus on maps between Smale spaces, and then introduce Putnam's homology theory for Smale spaces. Our first main result appears in Section \ref{modified3511} where we use \cite[Theorem 3.5.11]{put} to prove a corrected version of Putnam's \cite[Theorem 5.4.1]{put}, showing that the homology theory is functorial. Along the way we provide a discussion of the possibility of weakening the hypotheses required in the theorems. On the one hand, we show that certain natural ones fail in one way or another. On the other hand, Theorem \ref{ontoCloseOneToOne} implies that the more stringent hypotheses holds in rather general situations. In Section \ref{naturality_homology} we show that the homology theory is natural with respect to the choice of s/u-bijective pair used to define the homology groups under our more stringent hypothesis. In Section \ref{main_theorem_section} we show that the natural generalization of \cite[Theorem 3.5.11]{put} extends to general nonwandering Smale spaces. The final section provides examples showing that the hypotheses in our theorems are necessary.

\begin{Acknowledgements}
We thank Ian Putnam for many interesting and useful discussions about the content of this paper, and for his guidance and support during this early part of our academic careers. We also thank the referee for a number of useful suggestions.

The authors are also grateful to the Courant Research Centre at Georg-August-University G\"{o}ttingen, the Fields Institute, the University of Victoria, and the University of Wollongong for facilitating this collaboration by providing funding for research and conference visits.
\end{Acknowledgements}

\section{Preliminaries}\label{preliminaries}

The content of this section provides the essential results required later in the paper. We make no attempt to put the results in context or expand on detail. We refer the reader to the appropriate results in \cite{put} throughout the paper for a detailed treatment. For this reason the reader is advised to have a copy of Putnam's \emph{A Homology Theory for Smale Spaces} \cite{put} handy.

The first part is dedicated to Smale spaces and their properties. In the second part we discuss maps between Smale spaces culminating in Putnam's definition of an s/u-bijective pair associated to a Smale space. Putnam uses an s/u-bijective pair on a Smale space $(X,\varphi)$ to generalize Bowen's seminal theorem: given any irreducible Smale space $(X,\varphi)$, there is a shift of finite type $(\Sigma,\sigma)$ and a finite-to-1 factor map $\pi:(\Sigma,\sigma) \to(X,\varphi)$. An s/u-bijective pair for $(X,\varphi)$ consists of a Smale space $(Y,\psi)$ with totally disconnected unstable sets and a Smale space $(Z,\zeta)$ with totally disconnected stable sets such that the fibre product of $(Y,\psi)$ and $(Z,\zeta)$ is a shift of finite type that recovers Bowen's Theorem for $(X,\varphi)$.  In the final subsection, we summarize Putnam's homology theory. Putnam's key idea is to pass Krieger's dimension group invariant on a shift of finite type to a non-wandering Smale space using an s/u-bijective pair. The theory is reminiscent of \v{C}ech cohomology where the s/u-bijective pair plays the role of the ``good cover"; we refer the reader to the introduction of Putnam's manuscript \cite{put} for further insight into his approach.

\subsection{Smale spaces}\label{Sec:Smale}

A Smale space $(X,\varphi)$ consists of a compact metric $X$ space along with a homeomorphism $\varphi:X \rightarrow X$ such that every point $x \in X$ has a neighbourhood that is the product of two local coordinates, one that contracts under the action $\varphi$ and the other that contracts under the action of $\varphi^{-1}$ (expands under $\varphi$). The precise definition requires the definition of a bracket map satisfying certain axioms as follows. 

\begin{definition}[{\cite[p.19]{put}, \cite{Rue1}}]
A Smale space $(X,\varphi)$ consists of a compact metric space $X$ with metric $d$ along with a homeomorphism $\varphi: X \to X$ such that there exist constants $ \ep_{X} > 0, 0<\lambda < 1$ and a continuous bracket map
\[ (x,y) \in X, d(x,y) \leq \ep_{X} \mapsto [x, y] \in X \]
satisfying the bracket axioms:
\begin{itemize}
\item[B1] $\left[ x, x \right] = x$,
\item[B2] $\left[ x, [ y, z] \right] = [ x, z]$,
\item[B3] $\left[ [ x, y], z \right] = [ x,z ]$, and
\item[B4] $\varphi[x, y] = [ \varphi(x), \varphi(y)]$;
\end{itemize}
for any $x, y, z$ in $X$ when both sides are defined. In addition, $(X,\varphi)$ is required to satisfy the contraction axioms:
\begin{itemize}
\item[C1] For $y,z \in X$ such that $[x,y]=y$, we have $d(\varphi(y),\varphi(z)) \leq \lambda d(y,z)$ and
\item[C2] For $y,z \in X$ such that $[x,y]=x$, we have $d(\varphi^{-1}(y),\varphi^{-1}(z)) \leq d(y,z)$.
\end{itemize}
\end{definition}

For each $x$ in $X$ and $ 0 < \ep \leq \ep_{X}$, there are local stable and unstable sets defined by
\begin{align*}
 X^{s}(x, \ep) & = \{ y \in X \mid d(x,y) \leq \ep, [y,x] =x \}, \\ 
X^{u}(x, \ep) & = \{ y \in X \mid d(x,y) \leq \ep, [x,y] =x \}.
\end{align*}
The following diagram illustrates the bracket with respect to these sets. 
\begin{center}
\begin{tikzpicture}
\tikzstyle{axes}=[]
\begin{scope}[style=axes]
	\draw[<->] (-3,-1) node[left] {$X^s(x,\ep_X)$} -- (1,-1);
	\draw[<->] (-1,-3) -- (-1,1) node[above] {$X^u(x,\ep_X)$};
	\node at (-1.2,-1.4) {$x$};
	\node at (1.1,-1.4) {$[x,y]$};
	\pgfpathcircle{\pgfpoint{-1cm}{-1cm}} {2pt};
	\pgfpathcircle{\pgfpoint{0.5cm}{-1cm}} {2pt};
	\pgfusepath{fill}
\end{scope}
\begin{scope}[style=axes]
	\draw[<->] (-1.5,0.5) -- (2.5,0.5) node[right] {$X^s(y,\ep_X)$};
	\draw[<->] (0.5,-1.5) -- (0.5,2.5) node[above] {$X^u(y,\ep_X)$};
	\node at (0.7,0.2) {$y$};
	\node at (-1.6,0.2) {$[y,x]$};
	\pgfpathcircle{\pgfpoint{0.5cm}{0.5cm}} {2pt};
	\pgfpathcircle{\pgfpoint{-1cm}{0.5cm}} {2pt};
	\pgfusepath{fill}
\end{scope}
\end{tikzpicture}
\end{center}
The bracket then encodes the local product structure as follows: if $d(x,y) < \ep_X$, then $\{[x,y]\} = X^s(x,\ep_X) \cap X^u(y,\ep_X)$. A dynamical system $(X,\varphi)$ with a bracket map is a {\em Smale space}. We note that if a bracket map exists on $(X,\varphi)$ then it is unique. We are interested in Smale spaces satisfying various topological recurrence conditions - namely {\em non-wandering}, {\em irreducible}, and {\em mixing}, see \cite[Definitions 2.1.3, 2.1.4, and 2.1.5]{put} for the precise definitions.

Suppose $(X,\varphi)$ is a Smale space and $x \in X$, there are global stable and unstable equivalence relations on $X$ given by
\begin{eqnarray}
\nonumber
X^s(x) & = & \{y \in X | \lim_{n \rightarrow +\infty} d(\varphi^n(x),\varphi^n(y)) = 0\}, \\
\nonumber
X^u(x) & = & \{y \in X | \lim_{n \rightarrow +\infty} d(\varphi^{-n}(x),\varphi^{-n}(y)) = 0\}.
\end{eqnarray}
We will also denote stable (respectively, unstable) equivalence of points by $x \sim_s y$ ($x \sim_u y)$. Our notation indicates a connection between the global stable and local stable set of a point. Indeed, for any $x$ in $X$ and $\ep >0$, we have $X^s(x,\ep) \subset X^s(x)$. Further, a point $y$ is in $X^s(x)$ if and only if there exists $N \geq 0$ such that $\varphi^n(y)$ is in $X^s(\varphi^n(x),\ep)$ for all $n \geq N$, see \cite[Proposition 2.1.11]{put}. An important consequence of the local stable sets being subsets of the global stable sets is that the local stable sets $X^s(x,\ep)$ form a neighbourhood base for a locally compact and Hausdorff topology on the global stable set $X^s(y)$ as $0 <\ep< \ep_X$ and $x \in X^s(y)$ vary \cite[Proposition 2.1.12]{put}. The same results also hold for the unstable sets.

The building blocks of Putnam's homology theory are the shifts of finite type. These are precisely the zero dimensional Smale spaces and they come equipped with a homology theory called Krieger's dimension group. We briefly recount \cite[Section 2.2]{put} where the dimension group is defined in a way that suits our purpose.

Suppose $G=(G^0,G^1,i,t)$ is a strongly connected finite directed graph (there is a path of edges between every pair of vertices) with vertices $G^0$, edges $G^1$, and each edge $e \in G^1$ is given by a directed edge from vertex $i(e)$ to vertex $t(e)$, see \cite[Definition 2.2.1]{put}. For $K > 0$, the paths of length $K$ in $G$ are $K$-tuples $G^K:=\{(e^1,\cdots,e^K) \mid t(e^i)=i(e^{i+1}) \text{ for } 1 \leq i \leq K-1\}$ and for $K=0$ we define the paths of length zero to be the elements of $G^0$. For every $K \geq 1$ a new graph, denoted $G(K)$, is defined from $G$ with vertices $G(K)^0=G^{K-1}$, edges $G(K)^1=G^K$, with $i,t:G^K \to G^{K-1}$ defined by $i(e^1,\cdots,e^K)=(e^1,\cdots,e^{K-1})$ and $t(e^1,\cdots,e^K)=(e^2,\cdots,e^{K})$. Note that for $1 \leq L \leq K$, iterating the maps $i,t:G^K \to G^{K-1}$ $L$-times we obtain maps $i^L,t^L:G^K \to G^{K-L}$.

To any strongly connected directed graph there is an associated shift of finite type \cite[Definition 2.1.1]{LM}. Suppose $G$ is a strongly connected graph, a shift of finite type $(\Sigma_G,\sigma)$ is obtained taking the compact Hausdorff space $\Sigma_G$ consisting of all bi-infinite paths $(e^k)_{k \in \Z}$ in $G$ and the homeomorphism $\sigma: \Sigma_G \to \Sigma_G$ given by the left shift map $\sigma(e)^k=e^{k+1}$ for all $e \in G$. If $e \in \Sigma_G$ and $0\leq K \leq L$, we define $e^{[K,L]}$ to be the tuple $(e^K,\cdots,e^L)$ and we also define $e^{[K+1,K]}=t(e^K)=i(K+1)$. The bracket map and metric on $(\Sigma_G,\sigma)$ are given in \cite[Definition 2.2.5]{put}. Stable equivalence is right tail equivalence and unstable equivalence is left tail equivalence \cite[Definition 2.2.6]{put}. We note that a Smale space $(X,\varphi)$ is totally disconnected if and only if $(X,\varphi)$ is conjugate to a shift of finite type \cite[Theorem 2.2.8]{put}.

\subsection{Maps on Smale spaces}

Suppose $(X,\varphi)$ and $(Y,\psi)$ are dynamical systems, a map $\pi:(Y,\psi) \to (X,\varphi)$ is a continuous function $\pi:Y \to X$ such that $\pi \circ \psi=\varphi \circ \pi$. A surjective map is called a factor map. Note that maps between Smale spaces are automatically compatible with the bracket map \cite[Theorem 2.3.2]{put}.

We now restrict our attention to maps on Smale spaces. We specify our attention to the stable sets and note that each property has an analogous property with respect to the unstable sets. If $(X,\varphi)$ and $(Y,\psi)$ are Smale spaces and $\pi:(Y,\psi) \to (X,\varphi)$ is a map, then for any $y \in Y$ a routine argument with the bracket map shows that $\pi(Y^s(y)) \subset X^s(\pi(y))$. A map $\pi:(Y,\psi) \to (X,\varphi)$ to be \emph{s-resolving} if for any $y \in Y$ the restriction $\pi|_{Y^s(y)}:Y^s(y) \to X^s(\pi(y))$ is injective \cite{Fri}. Resolving maps have extremely nice properties as described in \cite[Section 2.5]{put}, however Putnam's homology theory requires an even stronger condition on maps. An s-resolving map is called \emph{s-bijective} if for all $y \in Y$ the restriction $\pi|_{Y^s(y)}:Y^s(y) \to X^s(\pi(y))$ is bijective. The importance of s-bijective maps is described in \cite[Theorem 2.5.6]{put}: if $\pi:(Y,\psi) \to (X,\varphi)$ is s-bijective, then $(\pi(Y),\varphi|_{\pi(Y)})$ is a Smale space. Putnam proves \cite[Theorem 2.5.8]{put} that if $\pi:(Y,\psi) \to (X,\varphi)$ is an s-resolving map and $(Y,\psi)$ is non-wandering, then $\pi$ is s-bijective. 

We now introduce one of the primary objects in Putnam's theory, fibre products of Smale spaces \cite[Definition 2.4.1]{put}. Suppose $(X,\varphi)$, $(Y_1,\psi_1)$, and $(Y_2,\psi_2)$ are dynamical systems and $\pi_i:(Y_i,\psi_i) \to (X,\varphi)$ are maps for $i=1,2$, then the fibre product $(\fib{Y_1}{\pi_1}{\pi_2}{Y_2},\psi_1 \times \psi_2)$ consists of the compact Hausdorff space
\[
\fib{Y_1}{\pi_1}{\pi_2}{Y_2} :=\{(y_1,y_2) \in Y_1 \times Y_2 \mid \pi_1(y_1)=\pi_2(y_2)\},
\]
along with a homeomorphism $\psi_1 \times \psi_2$. As shown in \cite[Theorem 2.4.1]{put}, if $(X,\varphi)$, $(Y_1,\psi_1)$, and $(Y_2,\psi_2)$ are Smale spaces, then the fibre product $(\fib{Y_1}{\pi_1}{\pi_2}{Y_2},\psi_1 \times \psi_2)$ is also a Smale space with metric $d((y_1,y_2),(y_1',y_2'))=\max\{d(y_1,y_1'),d(y_2,y_2')\}$.  The construction of a fibre product can be iterated, and we will be interested in $N$-fold fibre products over a single space. Suppose $\pi:(Y,\psi) \to (X, \varphi)$ is a map and define $(Y_N(\pi),\psi)$ to be the fibre product
\[
Y_N(\pi) :=\{(y_0,y_1, \cdots, y_N) \in Y^N \mid \pi(y_i)=\pi(y_j) \text{ for all } 0\leq i,j \leq N\},
\]
where $\psi(y_0,y_1,\dots,y_N)=(\psi(y_0),\psi(y_1),\dots,\psi(y_N))$. The dynamical system $(Y_N(\pi),\psi)$ is a Smale space whenever $(X,\varphi)$ and $(Y,\psi)$ are Smale spaces \cite[Proposition 2.4.4]{put}.

We will need two key results about maps on fibre products. The first is \cite[Theorem 2.5.13]{put}: suppose $(\fib{Y_1}{\pi_1}{\pi_2}{Y_2},\psi_1 \times \psi_2)$ is a fibre product of Smale spaces and let $P_i:\fib{Y_1}{\pi_1}{\pi_2}{Y_2} \to (Y_i,\psi_i)$ be the projection maps for $i=1,2$, then if $\pi_1$ is s-bijective, so is $P_2$. For the second result, we need a preliminary definition. Suppose $(Y_N(\pi), \psi)$ is an $N$-fold fibre product of a Smale space $(Y,\psi)$ over $(X,\varphi)$, then there is a map $\delta_n:Y_N(\pi) \to Y_{N-1}(\pi)$ given by
\begin{equation}\label{delete_n_Cech}
\delta_n(y_0,y_1,\cdots, y_N) = (y_0,y_1,\cdots,\check{y_n},\cdots,y_N),
\end{equation}
where $\check{y}_n$ denotes deleting the $n$th coordinate. Putnam's second key result \cite[Theorem 2.5.14]{put} shows that if $\pi:(Y,\psi) \to (X,\varphi)$ is s-bijective, then so is $\delta_n:(Y_N(\pi),\psi) \to (Y_{N-1}(\pi),\psi)$ for all $N \geq 1$ and $0\leq n \leq N$.

One of the seminal results for Smale spaces is Bowen's Theorem \cite{Bow}: If $(X,\varphi)$ is a non-wandering Smale space, then there exists a shift of finite type $(\Sigma,\sigma)$ and a factor map $\pi:(\Sigma,\sigma) \to (X,\varphi)$ such that $\pi$ is finite-to-1 and 1-to-1 on a dense $G_\delta$ subset of $\Sigma$. We now discuss Putnam's strengthening of Bowen's Theorem. We begin with Putnam's definition of an s/u-bijective pair.

\begin{definition}[{\cite[Definition 2.6.2]{put}}]
Suppose $(X,\varphi)$, $(Y,\psi)$, and $(Z,\zeta)$ are Smale spaces. The tuple $\pi=(Y,\psi,\pi_s,Z,\zeta,\pi_u)$ is called an s/u-bijective pair if
\begin{enumerate}
\item $\pi_s:(Y,\psi) \to (X,\varphi)$ is an s-bijective factor map,
\item $Y^u(y)$ is totally disconnected for all $y \in Y$,
\item $\pi_u:(Z,\zeta) \to (X,\varphi)$ is a u-bijective factor map, and
\item $Z^s(z)$ is totally disconnected for all $z \in Z$.
\end{enumerate}
\end{definition}

Putnam's generalization of Bowen's Theorem is the following.

\begin{thm}[{\cite[Theorem 2.6.3]{put}}]
If $(X,\varphi)$ is a non wandering Smale space, then there exists an s/u-bijective pair for $(X,\varphi)$.
\end{thm}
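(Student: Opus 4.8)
The plan is to use Bowen's Theorem as the only substantial input, to build the two Smale spaces of the pair by symmetric ``resolving'' constructions over a Bowen cover, and then to upgrade resolving to bijective by means of \cite[Theorem 2.5.8]{put}. First I would reduce to the case that $(X,\varphi)$ is irreducible: a non-wandering Smale space is a finite disjoint union of $\varphi$-invariant clopen irreducible Smale spaces, and the disjoint union of s/u-bijective pairs for the pieces is an s/u-bijective pair for $(X,\varphi)$. So assume $(X,\varphi)$ is irreducible. By Bowen's Theorem \cite{Bow} there are a shift of finite type $(\Sigma,\sigma)$ --- which we may take irreducible --- and a finite-to-one factor map $\rho\colon(\Sigma,\sigma)\to(X,\varphi)$, conveniently realized via a Markov partition $\mathcal R=\{R_1,\dots,R_M\}$ of $(X,\varphi)$ with $\rho$ the associated itinerary map. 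The map $\rho$ is in general neither s-resolving nor u-resolving --- it collapses the boundaries of the rectangles --- and the crux is to repair it on the stable side alone.

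For this one constructs a Smale space $(Y,\psi)$ together with a factor map $\pi_s\colon(Y,\psi)\to(X,\varphi)$ that ``resolves $\rho$ on the stable side'': informally, $Y$ is built out of the Markov data of $\mathcal R$ and the point set of $X$ so as to retain the totally disconnected symbolic coordinate that controls the unstable direction, while re-incorporating the genuine stable-direction data of $X$, and $\pi_s$ forgets the symbolic coordinate. One then verifies that (i) $(Y,\psi)$ satisfies the bracket and contraction axioms, the bracket being assembled from the bracket of $X$ and the symbolic bracket of $\Sigma$, with the Markov property controlling how the symbolic coordinate of a point is forced by nearby points; (ii) $Y^u(y)$ is totally disconnected for every $y \in Y$, which one reads off from the symbolic coordinate of the construction; and (iii) $\pi_s$ is a factor map and is s-resolving. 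Since $(X,\varphi)$ is irreducible and the construction is finite-to-one over $X$, the space $(Y,\psi)$ is non-wandering, so \cite[Theorem 2.5.8]{put} upgrades the s-resolving map $\pi_s$ to an s-bijective one; this gives conditions (1) and (2) of the definition of an s/u-bijective pair. Performing the mirror-image construction, with the roles of the stable and unstable directions interchanged, yields a Smale space $(Z,\zeta)$ with totally disconnected stable sets and a u-bijective factor map $\pi_u\colon(Z,\zeta)\to(X,\varphi)$, giving conditions (3) and (4); hence $(Y,\psi,\pi_s,Z,\zeta,\pi_u)$ is an s/u-bijective pair. One may further remark that its fibre product $\fib{Y}{\pi_s}{\pi_u}{Z}$ is automatically a shift of finite type: by \cite[Theorem 2.5.13]{put} the projection onto $Z$ is s-bijective and the projection onto $Y$ is u-bijective, so the fibre product has totally disconnected stable \emph{and} unstable sets, hence is totally disconnected and thus conjugate to a shift of finite type by \cite[Theorem 2.2.8]{put}.

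The main obstacle is the construction of $(Y,\psi)$ itself, together with the proof that it is a genuine Smale space with totally disconnected unstable sets: one has to interleave the discrete symbolic data of $\Sigma$ with the continuous data of $X$ so that the resulting space is compact, carries an honest local product (bracket) structure, has its unstable sets totally disconnected, and admits an s-resolving projection onto $X$ --- all arranged cleanly enough that the disjoint-union reduction from the irreducible to the general non-wandering case goes through. Once $(Y,\psi)$ and $(Z,\zeta)$ have been produced, the rest --- the promotion from s-resolving to s-bijective via \cite[Theorem 2.5.8]{put} and the assembly of the tuple --- is formal.
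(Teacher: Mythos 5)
There is a genuine gap. The statement is quoted in the paper from \cite[Theorem 2.6.3]{put} without proof; Putnam's actual argument takes Bowen's factor map $\rho\colon(\Sigma,\sigma)\to(X,\varphi)$ and then invokes the main theorem of his earlier work on lifting factor maps to resolving maps, which produces a Smale space $(Y,\psi)$ with factor maps $\alpha\colon(Y,\psi)\to(\Sigma,\sigma)$ u-resolving and $\pi_s\colon(Y,\psi)\to(X,\varphi)$ s-resolving; total disconnectedness of $Y^u(y)$ then follows because $\alpha$ is injective on unstable sets and $\Sigma$ is totally disconnected, and \cite[Theorem 2.5.8]{put} upgrades resolving to bijective, with the symmetric construction giving $(Z,\zeta)$. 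Your outline follows this strategy in spirit (Bowen, then a stable-side ``resolution,'' then the resolving-to-bijective upgrade), but the one step that carries all the content of the theorem --- the construction of $(Y,\psi)$ together with the verification that it is compact, satisfies the bracket and contraction axioms, has totally disconnected unstable sets, and admits an s-resolving factor map onto $X$ --- is only described informally (``interleave the symbolic data of $\Sigma$ with the continuous data of $X$'') and never carried out. Your closing paragraph concedes exactly this. That construction is not a routine verification: it is the main theorem of a separate paper of Putnam's, and the naive candidate (pairs of a point of $X$ with a compatible itinerary, with the symbolic coordinate ``controlling the unstable direction'') does not obviously yield a space with a local product structure or an s-resolving projection; the boundary identifications of the Markov partition are precisely where this breaks and where the real work lies. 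As written, the proposal therefore reduces the theorem to an unproved assertion that is essentially equivalent to it.

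Two smaller points. First, the claim that $(Y,\psi)$ is non-wandering ``since $(X,\varphi)$ is irreducible and the construction is finite-to-one over $X$'' is not automatic: a finite-to-one extension of a non-wandering system need not be non-wandering, so this either has to be built into the construction or handled by passing to the non-wandering part and checking that the restricted map is still onto $X$; it cannot simply be asserted, and it is needed before \cite[Theorem 2.5.8]{put} can be applied. Second, the reduction to the irreducible case and the concluding remark about the fibre product being a shift of finite type (via \cite[Theorem 2.5.13]{put} and \cite[Theorem 2.2.8]{put}) are fine, but they are the formal part of the argument; they do not compensate for the missing construction.
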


Suppose $(Y,\psi,\pi_s,Z,\zeta,\pi_u)$ is an s/u-bijective pair for $(X,\varphi)$. For each $L,M \geq 0$,
\[
\Sigma_{L,M}(\pi):=\{(y_0,\cdots,y_L,z_0,\cdots,z_M) \mid y_l \in Y, z_m \in Z, \pi_s(y_l)=\pi_u(z_m), 0 \leq l \leq L, 0 \leq m \leq M\}.
\]
By definition, $\Sigma_{0,0}(\pi)=\Sigma(\pi)$ is the fibre product of $(Y,\psi)$ and $(Z,\zeta)$. Define a map $\sigma: \Sigma_{L,M} \to \Sigma_{L,M}$ by
\[ 
\sigma_{L,M}(y_0,\cdots,y_L,z_0,\cdots,z_M)=(\psi(y_0),\cdots,\psi(y_L),\zeta(z_0),\cdots,\zeta(z_M)).
\]
For $L,M \geq 1$, $0 \leq l \leq L$, and $0 \leq m \leq M$, let $\delta_{l,\,}:\Sigma_{L,M} \to \Sigma_{L-1,M}$ and $\delta_{\,,m}:\Sigma_{L,M} \to \Sigma_{L,M-1}$ be the maps
\begin{align*}
\delta_{l,\,}(y_0,\cdots,y_L,z_0,\cdots,z_M)&=(y_0,\cdots,\check{y_l},\cdots,y_L,z_0,\cdots,z_M), \text{ and} \\
\delta_{\,,m}(y_0,\cdots,y_L,z_0,\cdots,z_M)&=(y_0,\cdots,y_L,z_0,\cdots,\check{z_m},\cdots,z_M), \\
\end{align*}
where $\check{y}_l$ and $\check{z}_m$ again denotes deleting the indicated coordinate. Putnam shows \cite[Theorem 2.6.6]{put} that if $\pi$ is an s/u-bijective pair for $(X,\varphi)$, then $(\Sigma_{L,M}(\pi),\sigma)$ is a shift of finite type for all $L,M \geq 0$. Moreover, \cite[Theorem 2.6.13]{put} shows that $\delta_{l,\,}:\Sigma_{L,M} \to \Sigma_{L-1,M}$ is an s-bijective factor map and $\delta_{\,,m}:\Sigma_{L,M} \to \Sigma_{L,M-1}$ is a u-bijective factor map.

\subsection{Putnam's homology theory on Smale spaces}

In this section we recall the construction of Putnam's homology theory. We begin with Krieger's dimension groups and show how the maps from the last section give rise to a double complex associated to a Smale space $(X,\varphi)$. Again the reader should be familiar with Putnam's manuscript since we are merely giving an overview.

Suppose $(\Sigma,\sigma)$ is a shift of finite type. In \cite{Kri}, Krieger associates two abelian groups to $(\Sigma,\sigma)$ known as Krieger's dimension group, denoted $D^s(\Sigma,\sigma)$ and $D^u(\Sigma,\sigma)$. We present a description of the stable dimension group and note that a similar construction gives the unstable version. Define 
 \[
 CO^s(\Sigma,\sigma):=\{E \in \Sigma^s(e) \mid e \in \Sigma \text{ and } E \text{ is a nonempty clopen subset of } \Sigma^s(e)\},
 \]
and for $E,F \in CO^s(\Sigma,\sigma)$ such that $[E,F]=E$ and $[F,E]=F$ let $\sim$ be the smallest equivalence relation such that $E \sim F$ if and only if $\sigma(E)=\sigma(F)$. Let $[E]$ denote the equivalence class of an element $E \in CO^s(\Sigma,\sigma)$. We are now ready to define the stable dimension group $D^s(\Sigma,\sigma)$.

\begin{definition}[\cite{Kri}, {\cite[Definition 3.3.2]{put}}]
Suppose $(\Sigma,\sigma)$ is a shift of finite type. Define $D^s(\Sigma,\sigma)$ to be the free abelian group on the $\sim$-equivalence classes of $CO^s(\Sigma,\sigma)$ modulo the subgroup generated by $[E \cup F]-[E]-[F]$, where $E,F,E \cup F \in CO^s(\Sigma,\sigma)$ and $E \cap F = \varnothing$.
\end{definition}
 
Krieger's dimension groups are functorial in the following ways.

\begin{thm}[{\cite[Theorem 3.4.1 and 3.5.1]{put}}]\label{Krieger_functorial}
Suppose $\pi:(\Sigma,\sigma) \to (\Sigma',\sigma)$ and $\pi':(\Sigma',\sigma) \to (\Sigma'',\sigma)$ are factor maps between shifts of finite type:
\begin{enumerate}
\item If $\pi$ and $\pi'$ are s-bijective, then $\pi^s([E])=[\pi(E)]$ induces a well-defined group homomorphism from $D^s(\Sigma,\sigma)$ to $D^s(\Sigma',\sigma)$ and $(\pi' \circ \pi)^s=\pi'^s \circ \pi^s$;
\item If $\pi$ and $\pi'$ are u-bijective, then $\pi^u([E])=[\pi(E)]$ induces a well-defined group homomorphism from $D^u(\Sigma,\sigma)$ to $D^u(\Sigma',\sigma)$ and $(\pi' \circ \pi)^u=\pi'^u \circ \pi^u$;
\item If $\pi$ and $\pi'$ are u-bijective, $E' \in CO^s(\Sigma',\sigma)$, and $E_1,\cdots,E_L \in CO^s(\Sigma,\sigma)$ satisfy conditions (1) and (2) of \cite[Theorem 3.5.1]{put}, then $\pi^{s*}([E'])=\sum_{l=1}^L[E_l]$ induces a well-defined group homomorphism from $D^s(\Sigma',\sigma)$ to $D^s(\Sigma,\sigma)$ and $(\pi \circ \pi')^{s*}=\pi'^{s*} \circ \pi^{s*}$; and
\item If $\pi$ and $\pi'$ are s-bijective, $E' \in CO^u(\Sigma',\sigma)$, and $E_1,\cdots,E_L \in CO^u(\Sigma,\sigma)$ satisfy analogues of (1) and (2) of \cite[Theorem 3.5.1]{put}, then $\pi^{u*}([E'])=\sum_{l=1}^L[E_l]$ induces a well-defined group homomorphism from $D^u(\Sigma',\sigma)$ to $D^u(\Sigma,\sigma)$ and $(\pi \circ \pi')^{u*}=\pi'^{u*} \circ \pi^{u*}$.
\end{enumerate}
\end{thm}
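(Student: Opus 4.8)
The plan is to first cut down the number of cases using stable/unstable duality. Passing to the time-reversed systems $(\Sigma,\sigma^{-1})$, $(\Sigma',\sigma^{-1})$, $(\Sigma'',\sigma^{-1})$ interchanges local stable sets with local unstable sets, interchanges s-bijective factor maps with u-bijective ones, and identifies $D^s(\,\cdot\,,\sigma)$ with $D^u(\,\cdot\,,\sigma^{-1})$. Hence (2) is (1) and (4) is (3), each applied to the time-reversed systems, and it suffices to establish (1) and (3).

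For (1), I would argue as follows. By the theory of resolving maps in \cite[Section 2.5]{put}, an s-bijective factor map $\pi$ restricts on each global stable set to an open continuous bijection $\Sigma^s(e)\to\Sigma'^s(\pi(e))$, hence to a homeomorphism; in particular $\pi$ carries $CO^s(\Sigma,\sigma)$ into $CO^s(\Sigma',\sigma)$. Since every factor map intertwines $\sigma$ with $\sigma$ and commutes with the bracket \cite[Theorem 2.3.2]{put}, it sends a pair $E,F$ with $[E,F]=E$, $[F,E]=F$, $\sigma(E)=\sigma(F)$ to a pair with the same three properties, so $[E]\mapsto[\pi(E)]$ respects the generators of $\sim$; and since $\pi$ is injective on each stable set, it sends disjoint clopen subsets of a stable set to disjoint sets, so it respects the relations $[E\cup F]-[E]-[F]$. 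Therefore $\pi^s([E]):=[\pi(E)]$ extends $\Z$-linearly to a well-defined homomorphism $D^s(\Sigma,\sigma)\to D^s(\Sigma',\sigma)$. Functoriality is then immediate from $(\pi'\circ\pi)(E)=\pi'(\pi(E))$ together with the (standard) fact that a composite of s-bijective factor maps of shifts of finite type is s-bijective, and $\id^s=\id$ is clear.

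For (3) the wrong-way homomorphism $\pi^{s*}$ is the substantive point. The structural input, again from \cite[Section 2.5]{put} applied to the u-bijective map $\pi$, is that $\pi$ restricts on each global stable set to a proper, open, finite-to-one local homeomorphism onto $\Sigma'^s(\pi(e))$, with the cardinalities of its fibres uniformly bounded (a u-bijective factor map of shifts of finite type is finite-to-one). Then, given $E'\in CO^s(\Sigma',\sigma)$, a point of its preimage is stably equivalent to one of the finitely many preimages of a fixed $e'$ with $E'\subseteq\Sigma'^s(e')$, so the preimage of $E'$ lies in a finite union of stable sets of $\Sigma$, and the local-homeomorphism property lets one decompose it into finitely many pairwise disjoint clopen sets $E_1,\dots,E_L\in CO^s(\Sigma,\sigma)$ on each of which $\pi$ is injective — an admissible system in the sense of conditions (1) and (2) of \cite[Theorem 3.5.1]{put}. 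One sets $\pi^{s*}([E']):=\sum_{l=1}^{L}[E_l]$ and must check this is independent of the decomposition (pass to the common refinement $\{E_l\cap F_m\}$ of two admissible systems and use additivity in $D^s(\Sigma,\sigma)$), that it descends through the relations $[E'\cup F']-[E']-[F']$ (intersect an admissible decomposition of the preimage of $E'\cup F'$ with the preimages of $E'$ and of $F'$), and that it descends through the generators of $\sim$ (apply $\sigma$, which commutes with $\pi$, to an admissible decomposition). The composition law for $\pi^{s*}$ then follows by composing, via preimages, an admissible system for $\pi$ with one for $\pi'$, using that a composite of u-bijective factor maps of shifts of finite type is u-bijective.

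The hard part is everything concerning the transfer maps $\pi^{s*}$ and $\pi^{u*}$: producing the decomposition $E_1,\dots,E_L$ with the stated properties, showing the resulting class in the dimension group is independent of it, verifying it descends through all three families of defining relations, and --- most delicately --- keeping track of the indexing needed for the composition law. By contrast, the covariant statements (1) and (2) are essentially formal once the behaviour of s-bijective maps on stable sets has been recorded.
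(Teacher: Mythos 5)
First, a remark on the comparison itself: the paper does not prove this statement at all --- Theorem \ref{Krieger_functorial} is background quoted from Putnam's memoir \cite[Theorems 3.4.1 and 3.5.1]{put} --- so your sketch can only be measured against Putnam's arguments. Your reduction of (2) and (4) to (1) and (3) by passing to $\varphi^{-1}$ is legitimate, and your outline of the covariant parts (images of elements of $CO^s$ under an s-bijective map, compatibility with the bracket, with $\sigma$, and with disjoint unions, using continuity and properness of $\pi$ on stable sets) is essentially the standard argument.

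The genuine gap is in the structural input you claim for the transfer map in (3). A u-bijective factor map does \emph{not} in general restrict on a global stable set to a surjection onto $\Sigma'^s(\pi(e))$, nor to a local homeomorphism; if $\pi$ is u-bijective but not s-resolving it fails even to be locally injective on some stable sets. Concretely, let $\Sigma'$ be the full $2$-shift and let $\Sigma$ be the shift of finite type on vertices $\{1,2,3\}$ where each vertex emits one edge labelled $a$ and one labelled $b$, the $a$-edges realizing the cycle $1\to 2\to 3\to 1$ and every $b$-edge ending at vertex $1$; the label map is a finite-to-one factor map onto $\Sigma'$ which is u-bijective (paths lift forward uniquely from a lift of their initial vertex, giving bijectivity on left-tail classes). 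If $e$ is an all-$a$ point of $\Sigma$ which is not at vertex $1$ at time $1$, the point $x''$ agreeing with $\pi(e)$ for $n\geq 1$ and having symbol $b$ at time $0$ lies in $\Sigma'^s(\pi(e))$, but every lift of $x''$ sits at vertex $1$ at time $1$ and then follows a different $a$-orbit forever, so $x''\notin\pi(\Sigma^s(e))$: surjectivity fails. Worse, at a point $z$ whose label sequence has $b$'s arbitrarily far in the past, every clopen neighbourhood of $z$ in $\Sigma^s(z)$ contains two distinct points with the same image (a $b$-edge ending at vertex $1$ can be lifted with source $1$, $2$ or $3$), so you cannot decompose $\pi^{-1}(E')$ into clopen pieces \emph{on each of which $\pi$ is injective}, which is exactly what you propose as the ``admissible system''; Putnam's conditions (1) and (2) cannot and do not require such injectivity. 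What is true, and what the argument must rest on, is the dual of \cite[Theorem 2.5.3]{put} (preimages of stable sets under a u-bijective map are finite unions of stable sets) together with continuity and properness of $\pi$ for the fine topologies, so that $\pi^{-1}(E')$ is a finite disjoint union of elements of $CO^s(\Sigma,\sigma)$ and $\pi^{s*}[E']$ is the sum of their classes, multiplicity being carried by the number of pieces (this is consistent with Example \ref{2to1example}, where $\pi^s\circ\pi^{s*}$ is multiplication by $2$). Once your structural claims are corrected in this way, the remaining verifications --- independence of the decomposition, descent through the relations defining $D^s$, and the composition law --- are precisely the nontrivial content of Putnam's Theorem 3.5.1, which you acknowledge but defer; so as written the proposal does not establish (3) and (4), it reduces them (after repair) to Putnam's published lemmas.
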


Putnam's homology theory is a generalization of Krieger's invariant to Smale spaces possessing an s/u-bijective pair. From this point forward fix a Smale space $(X,\varphi)$ and assume that $\pi$ is an s/u-bijective pair for $(X,\varphi)$. We begin by recalling Putnam's \cite[Definition 5.1.1]{put}: For each $L,M \geq 0$, define $C^s(\pi)_{L,M}=D^s(\Sigma_{L,M}(\pi),\sigma)$ and for either $L<0$ or $M<0$ define $C^s(\pi)_{L,M}=0$. Let $d^s(\pi)_{L,M}:C^s(\pi)_{L,M} \to C^s(\pi)_{L-1,M} \oplus C^s(\pi)_{L,M+1}$ be the map defined by
\[
d^s(\pi)_{L,M}=\sum_{0\leq l\leq L} (-1)^l \delta_{l,\,}^s + \sum_{0\leq m\leq M+1} (-1)^{L+m} \delta_{\,,m}^{s *}.
\]
There is a similar construction for the unstable sets, see \cite[Definition 5.1.1~(2)]{put}.
For all $L,M \geq 0$ there is an action of the permutation groups $S_{L+1} \times S_{M+1}$ on $\Sigma_{L,M}(\pi)$ that commutes with the dynamics. As in \cite[Definition 5.1.5]{put}, for $L,M \geq 0$ taking appropriate quotients and images of $D^s(\Sigma_{L,M}(\pi)$ with respect to the permutation group $S_{L+1} \times S_{M+1}$ leads to a double complex denoted $D_{\QQ,\AA}^s(\Sigma_{L,M}(\pi))$ associated with an s/u-bijective pair $\pi$ of $(X,\varphi)$. Putnam then defines $C_{\QQ,\AA}^s(\pi)_{L,M}=D_{\QQ,\AA}^s(\Sigma_{L,M}(\pi))$ and lets $d_{\QQ,\AA}^s(\pi)_{L,M}$ be the appropriate boundary map on $C_{\QQ,\AA}^s(\pi)_{L,M}$ induced from $d^s(\pi)_{L,M}$. The complex $C_{\QQ,\AA}^s(\pi)$ is shown to have only a finite number of nonzero entries in \cite[Theorem 5.1.10]{put}. We then have the following definition.

\begin{definition}[{\cite[Definition 5.1.11]{put}}]\label{HomDefn}
Suppose $\pi$ is an s/u-bijective pair for $(X,\varphi)$. Then $H^s_*(\pi)$ is the homology of the double complex $(C_{\QQ,\AA}^s(\pi),d_{\QQ,\AA}^s(\pi))$ given by
\[
H^s_N(\pi):=\ker\Big(\bigoplus_{L-M=N} d_{\QQ,\AA}^s(\pi)_{L,M}\Big) \Big/ \im\Big(\bigoplus_{L-M=N+1} d_{\QQ,\AA}^s(\pi)_{L,M}\Big).
\]
\end{definition}

A similar construction gives the homology $H^u_*(\pi)$. Putnam shows in \cite[Theorem 5.5.1]{put} that $H^s_*$ is independent of the s/u-bijective pair $\pi$. Thus defining a homology theory for $(X,\varphi)$ denoted $H^s_*(X,\varphi)$. 


\section{Modified Results from Putnam's Homology Theory}\label{modified3511}

In this section we restate several of Putnam's theorems from \cite{put} that the following sections will rely heavily on. The primary result of the section is an alteration of \cite[Theorem 5.4.1]{put}, where the two product maps are conjugacies rather than surjections. While this might seem to be a step in the wrong direction, the proof of \cite[Theorem 5.4.1]{put} relies on \cite[Theorem 3.5.11]{put} where the maps must be conjugacies. Example \ref{2to1example} shows that the conclusion of \cite[Theorem 3.5.11]{put} fails to hold if the product map is injective but not surjective or surjective but not injective.

\begin{thm}[{\cite[Theorem 3.5.11]{put}}]\label{mod3.5.11}
Suppose that 
\[
\begin{CD}
(\Sigma, \sigma) @>>\eta_1> (\Sigma_1, \sigma) \\
@VV \eta_2 V @VV \pi_1 V \\
(\Sigma_2, \sigma) @>> \pi_2 > (\Sigma_0, \sigma)\\
\end{CD}
\]
is a commutative diagram of non-wandering shifts of finite type in which $\eta_1$ and $\pi_2$ are s-bijective factor maps, and $\eta_2$ and $\pi_1$ are u-bijective factor maps. Moreover, suppose that $\eta_2 \times \eta_1 : (\Sigma, \sigma) \to \fib{(\Sigma_2, \sigma)}{\pi_2}{\pi_1}{(\Sigma_1, \sigma)}$ is a conjugacy.
Then
\[
\eta_1^{s} \circ \eta_2^{s*} = \pi_1^{s*} \circ \pi_2^{s}: D^s(\Sigma_2,\sigma) \to D^s(\Sigma_1,\sigma). 
\]
\end{thm}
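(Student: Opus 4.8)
The plan is to reduce everything to Krieger's dimension group computations on explicit clopen sets, using the conjugacy hypothesis to identify $(\Sigma,\sigma)$ with the fibre product $\fib{(\Sigma_2,\sigma)}{\pi_2}{\pi_1}{(\Sigma_1,\sigma)}$ throughout, and then to unwind both sides of the claimed identity as maps defined on generators $[E'] \in D^s(\Sigma_2,\sigma)$ where $E'$ ranges over clopen subsets of stable sets $\Sigma_2^s(e')$. Since $\pi_2$ is s-bijective, $\pi_2^s([E']) = [\pi_2(E')]$ by Theorem \ref{Krieger_functorial}(1), and since $\pi_1$ is u-bijective, $\pi_1^{s*}$ is the map of Theorem \ref{Krieger_functorial}(3), defined via a choice of clopen sets $F_1,\dots,F_L$ in $\Sigma_1$ covering the $\pi_1$-preimage of $\pi_2(E')$ in a compatible way (conditions (1) and (2) of \cite[Theorem 3.5.1]{put}). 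On the other side, $\eta_2^{s*}([E'])$ is again of the form $\sum_j [G_j]$ for clopen sets $G_j \subset \Sigma$ covering $\eta_2^{-1}(E')$ appropriately (using that $\eta_2$ is u-bijective), and then $\eta_1^s$ sends this to $\sum_j [\eta_1(G_j)]$.

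The key steps, in order, are as follows. First, I would fix a generator $[E']$ with $E' \in CO^s(\Sigma_2,\sigma)$ and pass to a point $e' \in \Sigma_2$ with $E' \subset \Sigma_2^s(e')$. Second, using the conjugacy $\eta_2 \times \eta_1$, I would transport the combinatorial data: a point of $\Sigma$ over $e'$ is a pair $(e', e_1)$ with $\pi_2(e') = \pi_1(e_1)$, and the stable set $\Sigma^s((e',e_1))$ maps homeomorphically onto $\Sigma_2^s(e')$ under $\eta_2$ (because $\eta_2$ is s-bijective being identified with the projection $P_2$, and by \cite[Theorem 2.5.13]{put}) and is locally carried by $\eta_1$ onto pieces of $\Sigma_1^s(e_1)$ lying over $\Sigma_0^s(\pi_2(e'))$. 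Third, I would check that the clopen sets $E_1, \dots, E_L$ selected on the $\Sigma_1$-side to compute $\pi_1^{s*}(\pi_2^s[E']) = \pi_1^{s*}([\pi_2(E')])$ pull back, through the conjugacy, to exactly a choice of clopen sets on $\Sigma$ that is admissible for computing $\eta_2^{s*}([E'])$, and that applying $\eta_1^s$ (i.e. pushing forward by $\eta_1$) to these recovers the very same sets $E_1, \dots, E_L$. Fourth, I would conclude that both composites send $[E']$ to $\sum_{l=1}^L [E_l]$, hence agree on all generators and therefore as homomorphisms.

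The main obstacle I anticipate is the third step: verifying that the selection rule for the clopen sets in the definition of $\pi_1^{s*}$ (which involves choosing a clopen neighbourhood on which $\pi_1$ restricts to a homeomorphism onto a clopen subset of a stable set downstairs, finitely many of which partition the relevant preimage) is genuinely compatible with the analogous selection rule for $\eta_2^{s*}$ once one identifies $\Sigma$ with the fibre product. This requires a careful diagram chase showing that $\pi_1^{-1}$ over $\Sigma_0^s(\pi_2(e'))$ and $\eta_2^{-1}$ over $\Sigma_2^s(e')$ are matched by the projections of the fibre product — precisely the content that the fibre-product projections are s- and u-bijective in the appropriate slots (\cite[Theorems 2.5.13, 2.5.14]{put}) — and that the commutativity $\eta_1 \circ (\text{inclusion into fibre product})$ agrees with $\pi_1$ on the relevant pieces. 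Checking well-definedness (independence of the choices of clopen covers, on both sides simultaneously) is where most of the technical care will be needed, but it is bookkeeping rather than a conceptual difficulty; the conceptual work is entirely in transporting the data across the conjugacy, which is why the conjugacy (rather than mere surjectivity) hypothesis is indispensable.
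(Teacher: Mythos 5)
First, a point of comparison: the paper itself offers no proof of Theorem \ref{mod3.5.11} --- it is quoted verbatim (with the conjugacy hypothesis, which is already present in Putnam's own statement) from \cite[Theorem 3.5.11]{put}, where the proof runs through graph presentations of the four shifts and left/right-covering graph homomorphisms. Your proposal, arguing directly on generators $[E']$, $E' \in CO^s(\Sigma_2,\sigma)$, using the clopen-set descriptions of $\pi^s$ and $\pi^{s*}$ in Theorem \ref{Krieger_functorial}, is therefore a genuinely different route, and its skeleton (compute both composites on $[E']$ and match admissible families across the identification of $\Sigma$ with the fibre product) can be made to work.

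However, your step 2 is wrong as stated, and it is the step on which your step 3 ``matching'' is hung. In the fibre product $\fib{(\Sigma_2,\sigma)}{\pi_2}{\pi_1}{(\Sigma_1,\sigma)}$ the map $\eta_2$ is the projection $P_1$ onto $\Sigma_2$ and is \emph{u}-bijective; it is $\eta_1 = P_2$ that is s-bijective, by \cite[Theorem 2.5.13]{put} applied to the s-bijective map $\pi_2$. Consequently $\eta_2$ does not map $\Sigma^s((e_2,e_1))$ homeomorphically (nor even injectively, nor onto) $\Sigma_2^s(e_2)$ in general: take $\pi_2 = \mathrm{id}$, so that $\Sigma \cong \Sigma_1$ and $\eta_2 = \pi_1$, a u-bijective map, which need not be injective on stable classes. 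The correct mechanism is different: write $E_0 = \pi_2(E')$ and choose $E_1,\dots,E_L \in CO^s(\Sigma_1,\sigma)$ with $\pi_1^{-1}(E_0) = E_1 \sqcup \cdots \sqcup E_L$, so that $\pi_1^{s*}\pi_2^s[E'] = \sum_l [E_l]$. Under the conjugacy one checks $\eta_2^{-1}(E') = \bigsqcup_l G_l$ with $G_l = \eta_2^{-1}(E') \cap \eta_1^{-1}(E_l)$, and the key facts are: (i) since $\pi_2$ is s-bijective it is injective on $\Sigma_2^s(e') \supseteq E'$, so for each $e_1 \in E_l$ there is a unique $e_2 \in E'$ with $\pi_2(e_2) = \pi_1(e_1)$, whence $\eta_1|_{G_l}$ is a bijection onto $E_l$ (injectivity of $\eta_2 \times \eta_1$ is used here, and this is exactly where mere surjectivity fails, cf.\ Example \ref{2to1example}); (ii) the sets $G_l$ generally meet several (finitely many, as $\eta_2$ is u-resolving) stable classes of $\Sigma$, so they must first be refined into elements of $CO^s(\Sigma,\sigma)$ to form an admissible family for $\eta_2^{s*}$, after which the relation $[E \cup F] = [E] + [F]$ in $D^s(\Sigma_1,\sigma)$ reassembles the images into $[E_l]$, giving $\eta_1^s \eta_2^{s*}[E'] = \sum_l [E_l]$. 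With step 2 replaced by (i) and (ii) your outline closes; as written, the decisive verification rests on a false claim about $\eta_2$ and is left unproved.
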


This result underlies much of Putnam's work in \cite{put} as well as our main results in this paper. We now examine the conjugacy condition in the statement of Theorem \ref{mod3.5.11}. In particular, Theorem \ref{ontoCloseOneToOne} gives a rather weak and tractable condition for the map $\eta_2\times \eta_1$ to be a conjugacy (given that it is onto).

\begin{lemma}
Suppose $\pi_1: (X_0,\varphi_0) \to (X_1,\varphi_1)$ and $\pi_2: (X_1,\varphi_1) \to (X_2,\varphi_2)$ are maps between non-wandering Smale spaces.  Moreover, suppose both $\pi_2$ and $\pi_2 \circ \pi_1$ are s-bijective (respectively u-bijective) factor maps.  If $\pi_1$ is onto, then $\pi_1$ is also s-bijective (respectively u-bijective).
\end{lemma}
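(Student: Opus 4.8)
The plan is to prove that $\pi_1$ is s-resolving and then appeal to \cite[Theorem 2.5.8]{put}: an s-resolving map out of a non-wandering Smale space is automatically s-bijective. Since $(X_0,\varphi_0)$ is non-wandering and, by the onto hypothesis, $\pi_1$ is a factor map, this reduction is legitimate, and it does all the work of upgrading injectivity on stable sets to bijectivity on stable sets. The u-bijective case will be handled identically after interchanging the roles of stable and unstable sets throughout and invoking the unstable analogue of \cite[Theorem 2.5.8]{put}.

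To see that $\pi_1$ is s-resolving, I would fix $y \in X_0$ and take $a,b \in X_0^s(y)$ with $\pi_1(a)=\pi_1(b)$; the goal is to deduce $a=b$. Composing with $\pi_2$ gives $(\pi_2\circ\pi_1)(a)=(\pi_2\circ\pi_1)(b)$, while the routine bracket-map argument recalled in Section \ref{preliminaries} shows that $\pi_2\circ\pi_1$ carries $X_0^s(y)$ into the single stable set $X_2^s\big((\pi_2\circ\pi_1)(y)\big)$. Since $\pi_2\circ\pi_1$ is assumed s-bijective, hence s-resolving, its restriction to that stable set is injective, and therefore $a=b$. This establishes that $\pi_1|_{X_0^s(y)}$ is injective for every $y$, i.e.\ that $\pi_1$ is s-resolving, and \cite[Theorem 2.5.8]{put} then yields s-bijectivity.

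The argument is genuinely short, so there is no real obstacle: the core content is the elementary observation that if a composite $g\circ f$ is injective on a subset then so is $f$, combined with Putnam's resolving-implies-bijective theorem. The one point worth flagging is that the non-wandering hypothesis on $(X_0,\varphi_0)$ is essential here — it is exactly what \cite[Theorem 2.5.8]{put} requires — whereas for this direction the hypothesis on $\pi_2$ beyond being a map is not actually used; only the s-bijectivity of the composite $\pi_2\circ\pi_1$ enters.
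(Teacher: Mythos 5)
Your proof is correct, but it follows a genuinely different route from the paper. The paper proves directly that $\pi_1$ restricted to each stable set is a bijection: injectivity exactly as you do (from injectivity of $\pi_2\circ\pi_1$ on stable sets), and surjectivity by lifting a point $y_1\in X_1^s(\pi_1(x_0))$ through the s-bijective composite $\pi_2\circ\pi_1$ to get $y_0\in X_0^s(x_0)$ with $\pi_2\circ\pi_1(y_0)=\pi_2(y_1)$, and then using the s-resolving property of $\pi_2$ to conclude $\pi_1(y_0)=y_1$. You instead stop at showing $\pi_1$ is s-resolving and invoke \cite[Theorem 2.5.8]{put} (s-resolving factor map from a non-wandering Smale space is s-bijective), with surjectivity of $\pi_1$ supplying the factor-map hypothesis and non-wandering of $(X_0,\varphi_0)$ supplying the recurrence hypothesis. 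Both arguments are valid; the trade-off is that your version only uses s-bijectivity of the composite (the hypothesis on $\pi_2$ itself is never invoked, as you correctly flag) but leans essentially on the non-wandering assumption and on Putnam's nontrivial resolving-implies-bijective theorem, whereas the paper's argument is self-contained, uses the s-bijectivity of both $\pi_2$ and $\pi_2\circ\pi_1$, and would go through without any recurrence hypothesis on the spaces.
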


\begin{proof}
We prove the s-bijective case, the u-bijective case is analogous.  Fix $x_0 \in X_0$, we need to show that $\pi_1|_{X^s_0(x_0)}: X^s_0(x_0) \to X^s_1(\pi_1(x_0))$ is bijective.  

For surjectivity,  suppose $y_1 \in X^s_1(\pi_1(x_0))$, then $\pi_2(y_1) \in X^s_2(\pi_2 \circ \pi_1 (x_0))$ and since  $\pi_2 \circ \pi_1$ is s-bijective, there exists a unique $y_0 \in X^s_0(x_0)$ such that $\pi_2 \circ \pi_1 (y_0) = \pi_2(y_1)$. Now since $y_0 \sim_s x_0$, $\pi_1(y_0) \sim_s \pi_1(x_0) \sim_s y_1$.  Since $\pi_2$ is s-bijective, we have that $\pi_1(y_0) = y_1$ and hence $\pi_1|_{X^s_0(x_0)}$ is surjective.

For injectivity,  suppose $y_0, y'_0 \in X^s_0(x_0)$ and $\pi_1(y_0) = \pi_1(y'_0)$, then $\pi_2 \circ \pi_1 (y_0) = \pi_2 \circ \pi_1 (y'_0)$ and $\pi_2 \circ \pi_1$ is injective. Thus, $y_0 = y_0'$, implying that $\pi_1|_{X^s_0(x_0)}$ is injective.
\end{proof}

\begin{thm}\label{thm:n-to-one}
Suppose $\pi: (Y, \psi) \to (X, \varphi)$ is a factor maps between irreducible Smale spaces which is both s-bijective and u-bijective.  Then $\pi$ is $n$-to-$1$ for some positive integer $n$.
\end{thm}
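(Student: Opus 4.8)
The plan is to show that the map $\pi$ has locally constant fibre cardinality and then invoke irreducibility to conclude this cardinality is globally constant. The key structural input is that an s-bijective (respectively u-bijective) factor map restricts to a bijection on each global stable (respectively unstable) set, so the fibre $\pi^{-1}(x)$ over a point $x$ can be understood through the local product structure. First I would fix $x \in X$ and $y \in \pi^{-1}(x)$, and use the fact that $\pi$ is s-bijective to see that within the local stable set $X^s(x,\ep)$ the map $\pi$ is a bijection from $Y^s(y,\ep')$ onto $X^s(x,\ep)$ for suitable $\ep,\ep'$; similarly u-bijectivity gives a bijection from $Y^u(y,\ep')$ onto $X^u(x,\ep)$. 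Combining these via the bracket (both in $Y$ and in $X$, using that maps are automatically bracket-compatible by \cite[Theorem 2.3.2]{put}), one gets that $\pi$ restricts to a homeomorphism from a product neighbourhood of $y$ onto a product neighbourhood of $x$. Hence $\pi$ is a local homeomorphism.

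Next I would use that $\pi$ is a local homeomorphism together with properness (it is a continuous map between compact metric spaces, hence closed, hence proper) to deduce that the fibre $\pi^{-1}(x)$ is finite for every $x$, say of cardinality $n(x)$, and that $n$ is a locally constant function on $X$: around each $x$ there is an open neighbourhood $U$ and disjoint open neighbourhoods $V_1,\dots,V_{n(x)}$ of the points of $\pi^{-1}(x)$ each mapped homeomorphically onto $U$ (shrinking $U$ using compactness of the fibre and the closedness of $\pi$ so that $\pi^{-1}(U) = V_1 \cup \cdots \cup V_{n(x)}$). This is the standard covering-space-type argument, and it shows $n$ is continuous, i.e.\ locally constant.

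Finally I would invoke irreducibility of $(X,\varphi)$: since $X$ is the closure of a single forward orbit that is dense (or more directly, an irreducible Smale space is topologically transitive and in particular has a dense orbit, and irreducibility of shifts of finite type / Smale spaces implies $X$ is not a disjoint union of proper clopen invariant pieces in a way that would let $n$ jump), the locally constant function $n$ must be constant. More carefully: $n$ locally constant means its level sets are clopen; the level set $n^{-1}(k)$ is also $\varphi$-invariant because $\varphi$ and $\psi$ are conjugacies intertwined by $\pi$, so $\pi^{-1}(\varphi(x)) = \psi(\pi^{-1}(x))$ has the same cardinality as $\pi^{-1}(x)$. An irreducible Smale space admits a point whose orbit is dense, so a nonempty clopen invariant set must be all of $X$; therefore $n$ is constant, equal to some positive integer $n$, and $\pi$ is $n$-to-$1$.

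The main obstacle I expect is the first step: carefully establishing that $\pi$ is a local homeomorphism by combining the s-bijective and u-bijective bijections on local stable and unstable sets through the bracket, making sure the radii $\ep$ are chosen uniformly and that the product coordinatization really does transfer under $\pi$. One must check that $\pi$ maps $X^s(y,\ep') \cap X^u(y,\ep')$-type neighbourhoods (i.e.\ the bracket coordinates around $y$) onto the corresponding neighbourhood of $x$ bijectively, which needs the compatibility $\pi[y_1,y_2] = [\pi(y_1),\pi(y_2)]$ together with injectivity on local stable and local unstable sets separately — injectivity on the product then follows, and surjectivity from the two surjectivities. Everything after that (finiteness and local constancy of fibres, then the irreducibility argument) is routine point-set topology combined with the definition of irreducibility.
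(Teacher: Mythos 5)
Your overall strategy (show $\pi$ is a local homeomorphism, deduce that the fibre cardinality is finite and locally constant, then use irreducibility/transitivity and $\varphi$-invariance to conclude it is constant) is genuinely different from the paper's argument, and the last two steps are fine. But there is a real gap at the step you yourself flag as the main obstacle, and your proposed resolution of it does not work as stated. You claim that local surjectivity of $\pi$ near $y$ follows ``from the two surjectivities,'' i.e.\ from the fact that $\pi|_{Y^s(y)}:Y^s(y)\to X^s(\pi(y))$ and $\pi|_{Y^u(y)}:Y^u(y)\to X^u(\pi(y))$ are onto. These are bijections of \emph{global} stable/unstable classes, and global bijectivity plus continuity does not give that points of $X^s(\pi(y),\ep)$ close to $\pi(y)$ have preimages close to $y$: a priori the unique stable preimage of a nearby point could lie far out in $Y^s(y)$, since the inverse of a continuous bijection of (locally compact, non-compact) stable classes need not be continuous. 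What you need is an openness/properness statement for the restriction of an s-bijective map to stable sets --- that $\pi(Y^s(y,\ep))$ contains $X^s(\pi(y),\delta)$ for some uniform $\delta$ (and the unstable analogue). Such results are established in Section 2.5 of \cite{put} (they are what make images of compact open subsets of stable sets again compact open, hence what makes $\pi^s$ well defined), so your route is salvageable by citing them, but as written the ``local homeomorphism'' claim is not justified; this is exactly the point where the nontrivial dynamical input enters.

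For comparison, the paper avoids all local analysis: it cites \cite[Theorem 2.5.3]{put} to write $\pi^{-1}(X^u(x))$ as a disjoint union of $K\le M$ unstable sets, each of which maps bijectively onto $X^u(x)$ by u-bijectivity, so $|\pi^{-1}\{z\}|=K$ for every $z\in X^u(x)$; symmetrically the fibre cardinality is constant on stable classes; and then Smale's Spectral Decomposition gives, for any $x,x'$, an iterate $\varphi^l(x)$ in the same mixing component as $x'$ and a point $z\in X^s(\varphi^l(x))\cap X^u(x')$, linking the two fibre counts. So the paper trades your local product-structure argument for constancy along global stable/unstable classes plus a chain argument, which only uses results already quoted elsewhere in the paper; your approach, once the openness lemma is supplied, additionally shows $\pi$ is a covering-type map, which is more information than the theorem asks for.
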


\begin{proof}
Since $\pi$ is s-bijective, by \cite[Theorem 2.5.3]{put} there exists $M \geq 1$ such that for any $x \in X$, there exist $y_1, y_2, \ldots, y_K \in Y$ with $K \leq M$ such that $\pi^{-1}(X^u(x)) = \cup_{k=1}^K Y^u(y_k)$ where $Y^u(y_i) \cap Y^u(y_j)=\varnothing$ for $i \neq j$. Now since $\pi$ is u-bijective, $\pi|_{Y^u(y_k)}: Y^u(y_k) \to X^u(x)$ is a bijection. 
Thus, $|\pi^{-1}\{z\}| = K$ for all $z \in X^u(x)$.

A similar argument, reversing the roles of s-bijective and u-bijective, shows that for any $x' \in X$ there exists $L \leq M$ such that $|\pi^{-1}\{z\}| = L$ for all $z \in X^u(x')$.

Now suppose that $X$ is irreducible and let $x,x' \in X$. Using Smale's Spectral Decomposition Theorem \cite[Theorem 2.1.13]{put} (also see \cite[Section 7.4]{Rue1} or \cite[Theorem 6.2]{Sma}), which implies that an irreducible Smale space is composed of a finite number of mixing components that cyclically permute under the action of $\varphi$, there exists $l$ such that $\varphi^l(x)$ and $x'$ are in the same mixing component.  Therefore, there exists $z \in X^s(\varphi^l(x)) \cap X^u(x')$, so $|\pi^{-1}\{x\}| = |\pi^{-1}\{\varphi^l(x)\}| = |\pi^{-1}\{z\}| = |\pi^{-1}\{x'\}|$. In other words, $\pi$ is $n$-to-$1$ for some $n$.
\end{proof}

\begin{lemma}\label{Lem:n-1}
Suppose 
\[
\begin{CD}
(X,\varphi) @>>\eta_1> (X_1,\varphi_1) \\
@VV \eta_2 V @VV \pi_1 V \\
(X_2,\varphi_2) @>> \pi_2 > (X_0,\varphi_0) \\
\end{CD}
\]
is a commutative diagram of irreducible Smale spaces in which $\eta_1$ and $\pi_2$ are s-bijective factor maps, and $\eta_2$ and $\pi_1$ are u-bijective factor maps. Moreover, suppose 
$\eta_2 \times \eta_1 : (X, \varphi) \to \fib{(X_2, \varphi_2)}{\pi_2}{\pi_1}{(X_1, \varphi_1)}$ is onto.
Then $\eta_2 \times \eta_1$ is $n$-to-$1$ for some positive integer $n$.
\end{lemma}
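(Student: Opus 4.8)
The plan is to show that the fibre $(\eta_2 \times \eta_1)^{-1}\{(x_2,x_1)\}$ has the same finite cardinality for every point $(x_2,x_1)$ in the target space $W := \fib{(X_2,\varphi_2)}{\pi_2}{\pi_1}{(X_1,\varphi_1)}$. First I would verify that $\eta_2 \times \eta_1$ is a factor map between irreducible Smale spaces: it is onto by hypothesis, the target $W$ is a Smale space by \cite[Theorem 2.4.1]{put}, and $(X,\varphi)$ is irreducible by hypothesis, so $\eta_2\times\eta_1$ is a factor map onto an irreducible Smale space (any factor image of an irreducible Smale space is irreducible). Next, using \cite[Theorem 2.5.13]{put}, since $\pi_2 \colon (X_2,\varphi_2) \to (X_0,\varphi_0)$ is s-bijective, the projection $P_1 \colon W \to (X_1,\varphi_1)$ is s-bijective; symmetrically, since $\pi_1$ is u-bijective, the projection $P_2 \colon W \to (X_2,\varphi_2)$ is u-bijective. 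The commutativity of the diagram gives $P_1 \circ (\eta_2\times\eta_1) = \eta_1$ and $P_2 \circ (\eta_2\times\eta_1) = \eta_2$.

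The main point is to deduce that $\eta_2\times\eta_1$ is itself both s-bijective and u-bijective, and then invoke Theorem \ref{thm:n-to-one}. For s-bijectivity: $\eta_1 = P_1 \circ (\eta_2\times\eta_1)$ is s-bijective, and $P_1$ is s-bijective; the preceding Lemma (the one stating that if $\pi_2$ and $\pi_2\circ\pi_1$ are s-bijective and $\pi_1$ is onto then $\pi_1$ is s-bijective) applies with the composition $\eta_1 = P_1\circ(\eta_2\times\eta_1)$ — here $P_1$ plays the role of the outer map and $\eta_2\times\eta_1$ the inner one, which is onto — yielding that $\eta_2\times\eta_1$ is s-bijective. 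Symmetrically, $\eta_2 = P_2\circ(\eta_2\times\eta_1)$ with $P_2$ u-bijective gives that $\eta_2\times\eta_1$ is u-bijective. Now $\eta_2\times\eta_1$ is a factor map between irreducible Smale spaces that is simultaneously s-bijective and u-bijective, so Theorem \ref{thm:n-to-one} immediately gives that it is $n$-to-$1$ for some positive integer $n$.

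The step I expect to require the most care is the bookkeeping in applying the Lemma: one must check that its hypotheses genuinely match the factorization $\eta_1 = P_1 \circ (\eta_2 \times \eta_1)$, i.e. that "$\pi_2$" in the Lemma corresponds to $P_1$, "$\pi_2 \circ \pi_1$" corresponds to $\eta_1$, and "$\pi_1$" corresponds to $\eta_2\times\eta_1$ (which is onto by hypothesis), and that all four spaces involved — $(X,\varphi)$, $W$, $(X_1,\varphi_1)$ — are non-wandering, which follows since they are irreducible. One should also confirm that $W$ is irreducible, or at least note that the Lemma and Theorem \ref{thm:n-to-one} only need the relevant spaces to be non-wandering (for the Lemma) and irreducible (for Theorem \ref{thm:n-to-one}) respectively; here $W$ need only be non-wandering for the Lemma, while Theorem \ref{thm:n-to-one} is applied to $\eta_2\times\eta_1 \colon (X,\varphi) \to \eta_2\times\eta_1(X,\varphi) = W$ viewed as a factor map between irreducible Smale spaces, so it suffices that $W$, being a factor of the irreducible space $(X,\varphi)$, is irreducible. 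Everything else is a direct citation of the results recalled in Section \ref{preliminaries}.
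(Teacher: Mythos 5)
Your proposal is correct and follows essentially the same route as the paper: the paper's (very terse) proof is precisely the combination of the preceding composition lemma applied to the factorizations $\eta_1 = P_{X_1}\circ(\eta_2\times\eta_1)$ and $\eta_2 = P_{X_2}\circ(\eta_2\times\eta_1)$ (using \cite[Theorem 2.5.13]{put} for the projections) together with Theorem \ref{thm:n-to-one}, exactly as you spell out. Your only deviations are cosmetic, e.g.\ labelling the projection onto $(X_1,\varphi_1)$ as $P_1$, and the explicit verification that the fibre product is an irreducible Smale space, which the paper leaves implicit.
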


\begin{proof}
Combining Theorem \ref{thm:n-to-one} and Lemma \ref{Lem:n-1} gives the result.
\end{proof}

Two obvious ways to relax the hypotheses in Theorem \ref{mod3.5.11} is to abandon the requirement that $\eta_2 \times \eta_1$ is surjective or to abandon the requirement that $\eta_2 \times \eta_1$ is injective.  Lemma \ref{Lem:n-1} shows that if we keep the requirement that $\eta_2 \times \eta_1$ is surjective, then the map is $n$-to-1. Example \ref{2to1example} shows that the surjectivity requirement is necessary, and that if $\eta_2 \times \eta_1$ is surjective but is $n$-to-1 for $n > 1$ the conclusion of Theorem \ref{mod3.5.11} fails to hold. 

Although the condition that $\eta_2 \times \eta_1$ is a conjugacy appears rather strong, the next theorem shows that it holds for a rather general class of maps $\eta_1$ and $\eta_2$.

\begin{thm}\label{ontoCloseOneToOne}
Suppose 
\[
\begin{CD}
(X,\varphi) @>>\eta_1> (X_1,\varphi_1) \\
@VV \eta_2 V @VV \pi_1 V \\
(X_2,\varphi_2) @>> \pi_2 > (X_0,\varphi_0) \\
\end{CD}
\]
is a commutative diagram of irreducible Smale spaces in which $\eta_1$ and $\pi_2$ are s-bijective factor maps, and $\eta_2$ and $\pi_1$ are u-bijective factor maps. Moreover, suppose $\eta_2 \times \eta_1 : (X, \varphi) \to \fib{(X_2, \varphi_2)}{\pi_2}{\pi_1}{(X_1, \varphi_1)}$ is onto and for $i=1$ or $2$ there exists $z \in X_i$ such that $(\eta_i)^{-1}\{z\}$ is a single element. Then $\eta_2 \times \eta_1 $ is a conjugacy.
\end{thm}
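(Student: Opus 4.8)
The plan is to reduce the whole statement to a cardinality count. Write $\Sigma := \fib{(X_2,\varphi_2)}{\pi_2}{\pi_1}{(X_1,\varphi_1)}$ for the fibre product, with coordinate projections $P_1 : \Sigma \to X_1$ and $P_2 : \Sigma \to X_2$, and set $h := \eta_2 \times \eta_1 : X \to \Sigma$. By construction $h$ is a factor map (it intertwines the dynamics, as fibre-product maps do), by hypothesis it is onto, and one has the factorizations $P_1 \circ h = \eta_1$ and $P_2 \circ h = \eta_2$. The goal is to show $h$ is injective; once that is known, $h$ is a continuous bijection from a compact metric space onto a Hausdorff space, hence a homeomorphism, and since it already commutes with the dynamics it is a conjugacy.

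First I would invoke Lemma \ref{Lem:n-1}: under exactly the present hypotheses (irreducibility, the s-/u-bijectivity pattern for $\eta_1,\pi_2$ and $\eta_2,\pi_1$, and surjectivity of $\eta_2 \times \eta_1$) the map $h$ is $n$-to-$1$ for some positive integer $n$; that is, $|h^{-1}\{\sigma\}| = n$ for \emph{every} $\sigma \in \Sigma$. All the structural work is thereby already packaged: the remaining task is simply to force $n = 1$, and this is where the extra hypothesis about a singleton fibre of $\eta_i$ enters.

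Suppose first the distinguished point lies in $X_1$, i.e. there is $z \in X_1$ with $|\eta_1^{-1}\{z\}| = 1$. Since $\eta_1 = P_1 \circ h$ we have $\eta_1^{-1}\{z\} = h^{-1}\bigl(P_1^{-1}\{z\}\bigr)$, a single point; in particular $P_1^{-1}\{z\}$ is nonempty, say of cardinality $k \geq 1$. Because $h$ is exactly $n$-to-$1$ and the fibres of $h$ over distinct points are disjoint, $\bigl|h^{-1}\bigl(P_1^{-1}\{z\}\bigr)\bigr| = kn$. Hence $kn = 1$, which forces $k = n = 1$. If instead the distinguished point lies in $X_2$, the identical argument using $\eta_2 = P_2 \circ h$ and $P_2$ in place of $P_1$ gives $n = 1$. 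In either case $h = \eta_2 \times \eta_1$ is injective, hence a conjugacy, as required.

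The main obstacle, such as it is, is not in the final theorem itself but in Lemma \ref{Lem:n-1} (and the Theorem \ref{thm:n-to-one} and composition lemma it rests on), which supplies the key fact that $h$ has \emph{uniform} fibre cardinality. The one point to handle carefully here is precisely that uniformity: "$n$-to-$1$" must be read as "every fibre has exactly $n$ points", since it is this uniformity together with surjectivity that validates the count $|h^{-1}(S)| = n\,|S|$ used above; with only an upper bound on fibre sizes the argument would not close.
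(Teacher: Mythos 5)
Your argument is correct and is essentially the paper's own proof: both invoke Lemma \ref{Lem:n-1} to get that $\eta_2\times\eta_1$ is exactly $n$-to-$1$ and then use the factorization $\eta_i = P_i\circ(\eta_2\times\eta_1)$ together with the singleton fibre of $\eta_i$ to force $n=1$. Your cardinality count $kn=1$ is just a slightly more explicit version of the paper's one-line conclusion, so there is nothing to add.
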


\begin{proof}
Lemma \ref{Lem:n-1} implies that $\eta_2 \times \eta_1$ is $n$-to-1 for some $n\ge 1$. The assumptions that $(\eta_i)^{-1}\{z\}$ is a single element and $\eta_i = P_i \circ (\eta_2 \times \eta_1)$ imply that $(\eta_2 \times \eta_1)^{-1}\{z\}$ is also a single element. Thus $n$ must be one and hence $\eta_2 \times \eta_1$ is a conjugacy.
\end{proof}

\begin{remark}
The previous theorem implies that the assumption that the relevant map is a conjugacy in the statements of Theorems \ref{mod5.4.1}, \ref{naturality} and \ref{main_thm} can be replaced by the assumption that the map is onto and the relevant set contains a single element in the case when the Smale spaces are irreducible.
\end{remark}

\begin{thm}[cf~{\cite[Theorem 5.4.1]{put}}]\label{mod5.4.1}
Suppose $(X,\phi)$ is a non-wandering Smale space with s/u-bijective pair $\pi = (Y,\psi,\pi_s,Z,\zeta,\pi_u)$ and $(X',\phi')$ is a non-wandering Smale space with s/u-bijective pair $\pi' = (Y',\psi',\pi'_s,Z',\zeta',\pi'_u)$.  Define $\eta = (\eta_X,\eta_Y,\eta_Z)$ to be a triple of factor maps
\begin{align*}
\eta_X: (X,\phi) & \to (X',\phi') \\
\eta_Y: (Y,\psi) & \to (Y',\psi') \\
\eta_Z: (Z,\zeta) & \to (Z',\zeta') \\
\end{align*}
such that the diagrams
\[
\begin{CD}
(Y,\psi) @>>\pi_s> (X,\varphi) \\
@VV \eta_Y V @VV \eta_X V \\
(Y',\psi') @>> \pi'_s > (X',\varphi') \\
\end{CD} \qquad \textrm{and} \qquad 
\begin{CD}
(Z,\zeta) @>>\pi_u> (X,\varphi) \\
@VV \eta_Z V @VV \eta_X V \\
(Z',\zeta') @>> \pi'_u > (X',\varphi') \\
\end{CD}
\]
are both commutative and the maps $\eta_Y \times \pi_s: (Y,\psi) \to  \fib{(Y', \psi')}{\pi'_s}{\eta_X}{(X , \varphi)}$ and $\eta_Z \times \pi_u: (Z,\zeta) \to  \fib{(Z', \zeta')}{\pi'_u}{\eta_X}{(X , \varphi)}$ are both conjugacies.
\begin{enumerate}
\item If $\eta_X$, $\eta_Y$, and $\eta_Z$ are s-bijective, then they induce chain maps between the complexes $C_{\QQ,\AA}^s(\pi)$ and $C_{\QQ,\AA}^s(\pi')$ and hence group homomorphisms
\[
\eta^s: H_N^s(\pi) \to H_N^s(\pi'),
\]
for every integer $N$.
\item If $\eta_X$, $\eta_Y$, and $\eta_Z$ are u-bijective, then they induce chain maps between the complexes $C_{\QQ,\AA}^s(\pi')$ and $C_{\QQ,\AA}^s(\pi)$ and hence group homomorphisms
\[
\eta^{s*}: H_N^s(\pi') \to H_N^s(\pi),
\]
for every integer $N$.

\end{enumerate}
These constructions are functorial in the sense that if $\eta_1$ and $\eta_2$ are both triples of s-bijective factor maps and the ranges of $\eta_1$ are the domains of $\eta_2$, then
\[
(\eta_2 \circ \eta_1)^s = \eta_2^s \circ \eta_1^s.
\]
An analogous statement holds for the composition of u-bijective factor maps.

\end{thm}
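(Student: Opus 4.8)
The plan is to follow Putnam's proof of \cite[Theorem 5.4.1]{put} essentially verbatim, using Theorem \ref{mod3.5.11} in place of \cite[Theorem 3.5.11]{put}; the added conjugacy hypotheses on $\eta_Y\times\pi_s$ and $\eta_Z\times\pi_u$ are precisely what make the invocations of Theorem \ref{mod3.5.11} legitimate. First I would record the maps on the auxiliary shifts. For each $L,M\ge 0$ the triple $\eta$ induces a map $\eta_{L,M}\colon\Sigma_{L,M}(\pi)\to\Sigma_{L,M}(\pi')$ by applying $\eta_Y$ to each $y$-coordinate and $\eta_Z$ to each $z$-coordinate; commutativity of the two squares in the hypothesis guarantees the image lies in $\Sigma_{L,M}(\pi')$, and $\eta_{L,M}$ visibly commutes with $\sigma$, with the deletion maps $\delta_{l,\,}$ and $\delta_{\,,m}$, and with the $S_{L+1}\times S_{M+1}$-actions. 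Recall that each $\Sigma_{L,M}(\pi)$ is a (non-wandering) shift of finite type and the deletion maps are s-bijective, respectively u-bijective, factor maps \cite[Theorems 2.6.6 and 2.6.13]{put}.

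Next I would determine the ``shape'' of $\eta_{L,M}$. Because $\eta_Y\times\pi_s$ and $\eta_Z\times\pi_u$ are conjugacies, a routine fibre-product computation shows that the natural map $\Sigma_{L,M}(\pi)\to\fib{\Sigma_{L,M}(\pi')}{\rho'}{\eta_X}{X}$ is a conjugacy, where $\rho'\colon\Sigma_{L,M}(\pi')\to X'$ sends a point to the common value of $\pi_s'$ on its $y$-coordinates, and that under this conjugacy $\eta_{L,M}$ corresponds to the projection onto $\Sigma_{L,M}(\pi')$. Hence, by \cite[Theorem 2.5.13]{put} and its unstable analogue, $\eta_{L,M}$ is s-bijective when $\eta_X$ is and u-bijective when $\eta_X$ is. The same computation yields more: for each deletion map, the square relating $\eta_{L,M}$, the corresponding $\eta_{L-1,M}$ or $\eta_{L,M+1}$, and the two copies of that deletion map is Cartesian; equivalently, the induced map from $\Sigma_{L,M}(\pi)$ into the relevant fibre product is a conjugacy.

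With this in hand Putnam's argument applies. In case (1) the maps $\eta_{L,M}^s$ are defined via Theorem \ref{Krieger_functorial}(1), and to see that they assemble into a chain map $C^s(\pi)\to C^s(\pi')$ one checks commutation with the two halves of $d^s(\pi)_{L,M}$ separately. Commutation with the $\delta_{l,\,}^s$ terms is immediate from covariant functoriality of $D^s$ on s-bijective maps (all four sides of the relevant square being s-bijective). Commutation with the $\delta_{\,,m}^{s*}$ terms is exactly the conclusion of Theorem \ref{mod3.5.11} applied to the mixed Cartesian square of the previous paragraph (two s-bijective and two u-bijective sides, with the fibre-product map a conjugacy). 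Passing to the $S_{L+1}\times S_{M+1}$-quotients, which is permitted since $\eta_{L,M}$ is equivariant, yields a chain map $C_{\QQ,\AA}^s(\pi)\to C_{\QQ,\AA}^s(\pi')$ and hence the homomorphisms $\eta^s$. Case (2) is symmetric: the $\eta_{L,M}^{s*}$ are defined via Theorem \ref{Krieger_functorial}(3), commutation with the $\delta_{\,,m}^{s*}$ terms is plain functoriality of $(\,\cdot\,)^{s*}$ on u-bijective maps, and commutation with the $\delta_{l,\,}^s$ terms is again Theorem \ref{mod3.5.11} for the appropriate mixed Cartesian square. Finally, $(\eta_2\circ\eta_1)^s=\eta_2^s\circ\eta_1^s$ follows from functoriality of $D^s$ once one notes that $(\eta_2\circ\eta_1)_{L,M}=(\eta_2)_{L,M}\circ(\eta_1)_{L,M}$ and that the composite triple again satisfies the conjugacy hypotheses (a composite of the pullback identifications above is again a pullback), so the equality descends to the quotient complexes and to homology; the u-bijective statement is analogous.

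The step I expect to be the main obstacle is the fibre-product bookkeeping of the second paragraph: verifying that the conjugacy hypotheses on $\eta_Y\times\pi_s$ and $\eta_Z\times\pi_u$ really force the squares governing the $\delta_{l,\,}$ and $\delta_{\,,m}$ terms to be Cartesian --- that is, that the relevant maps into fibre products are conjugacies and not merely onto. This is exactly where the corrected hypothesis (conjugacy rather than surjectivity) is indispensable, mirroring the role of the conjugacy assumption in Theorem \ref{mod3.5.11} itself; everything else is Putnam's original argument together with the functoriality recorded in Theorem \ref{Krieger_functorial}.
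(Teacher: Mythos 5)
Your proposal is correct and follows essentially the same route as the paper: the paper's proof simply defers to Putnam's argument for \cite[Theorem 5.4.1]{put}, observing that the conjugacy hypotheses on $\eta_Y\times\pi_s$ and $\eta_Z\times\pi_u$ are exactly what legitimize the invocations of Theorem \ref{mod3.5.11} (via \cite[Theorem 4.4.1]{put}), and the fibre-product bookkeeping you spell out (the Cartesian property of the mixed squares involving the deletion maps) is precisely the verification the paper carries out in its Theorem \ref{naturality}.
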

\begin{remark}
The statement of \cite[Theorem 5.4.1]{put} only requires the maps $\eta_Y \times \pi_s$ and $\eta_Z \times \pi_u$ to be surjective, not conjugacies as above. Again Example \ref{2to1example} shows that surjectivity alone is not enough. In the following section we show that it is always possible to find s/u-bijective pairs that satisfy our more stringent requirement (in fact, the s/u-bijective pair Putnam constructs in \cite[Theorem 5.4.2]{put} is an example); hence the conclusions regarding functoriality in \cite{put} remain valid. 
\end{remark}

\begin{proof}[Proof of Theorem \ref{mod5.4.1}]
The proof is as in \cite[Theorem 5.4.1]{put}.  However, since the proof of \cite[Theorem 5.4.1]{put} ultimately relies on \cite[Theorem 3.5.11]{put} (via \cite[Theorem 4.4.1]{put}), the maps $\eta_Y \times \pi_s$ and $\eta_Z \times \pi_u$ should be conjugacies.
\end{proof}

\section{Naturality of Putnam's homology theory}\label{naturality_homology}

In the previous section we observed that Theorem \ref{mod5.4.1} required a more stringent hypothesis than the one stated in \cite[Theorem 5.4.1]{put}. Theorem 5.5.1 in \cite{put} shows that $H^s_{*}(X,\phi)$ is independent of the s/u-bijective pair used to compute it, which allows the definition $H^s_{N}(X,\phi):=H^s_{N}(\pi)$ where $\pi$ is any bijective pair used to compute the homology. Provided the hypotheses of Theorem \ref{mod5.4.1} are satisfied, it implies that for an s-bijective map $\eta_X: (X,\phi) \to (X',\phi')$, there is an induced group homomorphism $\eta^s: H^s_{N}(X,\phi) \to H^s_{N}(X',\phi')$, where $\eta^s$ is defined in the statement of Theorem \ref{mod5.4.1}.  
In this section we ensure that the functoriality results remain valid given our more stringent requirements on s/u-bijective pairs. We also prove a naturality result with regard to Putnam's homology theory (see Theorem \ref{main_naturality} for the precise statement). In Theorem \ref{auto_naturality} we give a natural class of maps (i.e., automorphisms) that satisfy the hypotheses of the theorems in this section. This example is related to Putnam's Lefschetz formula (see Remark \ref{LF_rem}). 

Our first result shows that we can relax the hypotheses of Theorem \ref{mod5.4.1}. For example, using the notation of Theorem \ref{mod5.4.1}, if $\eta$ is s-bijective, then we need only assume that $\pi_u \times \eta_Z$ is a conjugacy; this allows slightly more flexibility in the choice of s/u-bijective pairs when defining the induced map on homology.

\begin{thm}\label{naturality}
Let $\pi = (Y,\psi,\pi_s,Z,\zeta,\pi_u)$ and $\pi' = (Y',\psi',\pi'_s,Z',\zeta',\pi'_u)$ be s/u-bijective pairs for the non-wandering Smale spaces $(X,\phi)$ and $(X',\phi')$ respectively.  Let $\eta = (\eta_X,\eta_Y,\eta_Z)$ be a triple of s-bijective factor maps, or a triple of u-bijective factor maps.
\begin{align*}
\eta_X: (X,\phi) & \to (X',\phi') \\
\eta_Y: (Y,\psi) & \to (Y',\psi') \\
\eta_Z: (Z,\zeta) & \to (Z',\zeta')
\end{align*}
such that the following diagram commutes:
\begin{equation}\label{com_diag_nat}
\begin{CD}
(Y,\psi) @>>\pi_s> (X,\varphi) @<<\pi_u< (Z,\zeta)\\
@VV \eta_Y V @VV \eta_X V @VV \eta_Z V\\
(Y',\psi') @>> \pi'_s > (X',\varphi') @<<\pi'_u< (Z',\zeta').\\
\end{CD}
\end{equation}
If $\eta$ is s-bijective, we assume $\pi_u \times \eta_Z: (Z, \zeta) \to \fib{(X, \phi)}{\eta_X}{\pi_u'}{(Z', \zeta')}$ is a conjugacy.
Then, for each $L,M \geq 0$ 
\[
\begin{CD}
\Sigma_{L,M}(\pi) @>>\eta_{L,M}> \Sigma_{L,M}(\pi') \\
@VV \delta_{,m} V @VV \delta_{,m} V \\
\Sigma_{L,M-1}(\pi) @>> \eta_{L,M-1} > \Sigma_{L,M-1}(\pi') \\
\end{CD}
\]
is a commutative diagram and $\eta_{L,M} \times \delta_{,m}: \Sigma_{L,M}(\pi) \to \fib{\Sigma_{L,M}(\pi')}{\delta'_{,m}}{\eta_{L,M-1}}{\Sigma_{L,M-1}(\pi)}$ is a conjugacy.
In particular, $\eta$ induces a chain map on the double complexes used to define $H^s(\pi)$ and $H^s(\pi')$. 

Similarly, if $\eta$ is u-bijective we assume in \eqref{com_diag_nat} that $\pi_s \times \eta_Y: (Y, \psi) \to \fib{(X, \phi)}{\eta_X}{\pi_s'}{(Y', \psi')}$ is a conjugacy.
Then, for each $L,M \geq 0$
\[
\begin{CD}
\Sigma_{L,M}(\pi) @>>\delta_{l,} > \Sigma_{L-1,M}(\pi) \\
@VV \eta_{L,M} V @VV \eta_{L-1,M} V \\
\Sigma_{L,M}(\pi') @>> \delta_{l,} > \Sigma_{L-1,M}(\pi') \\
\end{CD}
\]
is a commutative diagram and  $\delta_{l,} \times \eta_{L,M}: \Sigma_{L,M}(\pi) \to \fib{\Sigma_{L-1,M}(\pi)}{\eta_{L-1,M}}{\delta_{l,}}{\Sigma_{L,M}(\pi')}$ is a conjugacy.
In particular, $\eta$ induces a chain map on the double complexes used to define $H^s(\pi)$ and $H^s(\pi')$.
Moreover, these constructions are functorial in the same sense as in the statement of Theorem \ref{mod5.4.1}.
\end{thm}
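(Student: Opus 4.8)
The plan is to carry out the construction from Putnam's \cite[Theorem 5.4.1]{put} in this setting and then verify the two points that are genuinely new: that the fibre-product maps appearing in the conclusion are conjugacies rather than mere surjections, and that the induced maps on $D^s$ fit together into a chain map.

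First I would introduce the coordinate-wise maps $\eta_{L,M}\colon\Sigma_{L,M}(\pi)\to\Sigma_{L,M}(\pi')$, sending $(y_0,\dots,y_L,z_0,\dots,z_M)$ to $(\eta_Y(y_0),\dots,\eta_Y(y_L),\eta_Z(z_0),\dots,\eta_Z(z_M))$. This lands in $\Sigma_{L,M}(\pi')$ by the commutativity of \eqref{com_diag_nat}, since $\pi'_s(\eta_Y(y_l))=\eta_X(\pi_s(y_l))=\eta_X(\pi_u(z_m))=\pi'_u(\eta_Z(z_m))$, and it is continuous, intertwines the shifts, and is $S_{L+1}\times S_{M+1}$-equivariant because permuting coordinates commutes with applying $\eta_Y$ and $\eta_Z$ coordinate-wise. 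Next I would check that $\eta_{L,M}$ is s-bijective when $\eta_X,\eta_Y,\eta_Z$ are (and u-bijective in the u-bijective case): one builds a candidate stable-set preimage coordinate-wise using s-bijectivity of $\eta_Y$ and $\eta_Z$, and recovers the matching relation $\pi_s=\pi_u$ on that preimage from injectivity of $\eta_X$ on stable sets (alternatively one can induct on $L+M$ using \cite[Theorem 2.5.13]{put}). Commutativity of the two square diagrams in the statement is then immediate, since deleting a coordinate commutes with applying $\eta$ coordinate-wise.

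The crux is showing that $\eta_{L,M}\times\delta_{,m}$ (resp. $\delta_{l,}\times\eta_{L,M}$) is a conjugacy; I would treat the s-bijective case, the other being the same with the $Y$- and $Z$-coordinates exchanged and $\pi_s\times\eta_Y$ in place of $\pi_u\times\eta_Z$. All spaces here are shifts of finite type, hence compact metric, and the map intertwines the shifts, so it suffices to show it is a bijection. Given $(\xi',\xi)\in\fib{\Sigma_{L,M}(\pi')}{\delta'_{,m}}{\eta_{L,M-1}}{\Sigma_{L,M-1}(\pi)}$, write $\xi'=(y'_0,\dots,y'_L,z'_0,\dots,z'_M)$ and $\xi=(y_0,\dots,y_L,z_0,\dots,\check{z}_m,\dots,z_M)$; the relation $\delta'_{,m}(\xi')=\eta_{L,M-1}(\xi)$ forces $y'_l=\eta_Y(y_l)$ and $z'_k=\eta_Z(z_k)$ for $k\ne m$. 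Any preimage $\omega$ must agree with $\xi$ in every coordinate except the $m$-th $Z$-coordinate $\tilde z_m$, and membership of $\omega$ in $\Sigma_{L,M}(\pi)$ forces $\pi_u(\tilde z_m)=\pi_s(y_0)$ (because all the $\pi_s(y_l)$ and all the $\pi_u(z_k)$ with $k\ne m$ already coincide, as $M\ge 1$ leaves at least one such $k$), while $\eta_{L,M}(\omega)=\xi'$ forces $\eta_Z(\tilde z_m)=z'_m$. Now $(\pi_s(y_0),z'_m)$ lies in $\fib{(X,\phi)}{\eta_X}{\pi'_u}{(Z',\zeta')}$, since $\eta_X(\pi_s(y_0))=\pi'_s(y'_0)=\pi'_u(z'_m)$, so the hypothesis that $\pi_u\times\eta_Z$ is a conjugacy (in particular a bijection) produces a unique such $\tilde z_m$, hence a unique $\omega$. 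Thus $\eta_{L,M}\times\delta_{,m}$ is a continuous equivariant bijection of shifts of finite type, i.e. a conjugacy.

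Finally, to get the chain map it is enough, by equivariance of $\eta_{L,M}$, to work on $C^s(\pi)$ before passing to $C_{\QQ,\AA}^s(\pi)$. In the s-bijective case the maps $\eta^s_{L,M}\colon D^s(\Sigma_{L,M}(\pi))\to D^s(\Sigma_{L,M}(\pi'))$ exist by Theorem \ref{Krieger_functorial}(1); they commute with the $\delta^s_{l,}$-summands of $d^s(\pi)$ by functoriality of $D^s$ applied to the (trivially commutative) $\delta_{l,}$-squares, and with the $\delta^{s*}_{,m}$-summands by Theorem \ref{mod3.5.11} applied to the $\delta_{,m}$-squares, whose conjugacy hypothesis is exactly what the previous paragraph supplies (after the canonical identification $\fib{A}{f}{g}{B}\cong\fib{B}{g}{f}{A}$). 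In the u-bijective case the roles swap: one uses the contravariant maps $\eta^{s*}_{L,M}$ of Theorem \ref{Krieger_functorial}(3), functoriality of the $s*$-construction for the $\delta^{s*}_{,m}$-summands, and Theorem \ref{mod3.5.11} applied to the $\delta_{l,}$-squares (using the conjugacy $\delta_{l,}\times\eta_{L,M}$) for the $\delta^s_{l,}$-summands. Functoriality in the sense of Theorem \ref{mod5.4.1} then follows from $(\eta_2\circ\eta_1)_{L,M}=(\eta_2)_{L,M}\circ(\eta_1)_{L,M}$ and functoriality of $D^s$ (resp. the $s*$-construction), once one checks by a short diagram chase that the conjugacy hypotheses are preserved under composition. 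The main obstacle I anticipate is organizational rather than conceptual: matching each summand of the boundary map with exactly the square required by Theorem \ref{mod3.5.11}, and tracking which of the two conjugacy hypotheses is the one actually used in each case; the s-bijectivity of $\eta_{L,M}$, though routine, must also be established carefully since it is what licenses the appeal to Theorem \ref{Krieger_functorial}.
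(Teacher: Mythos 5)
Your proposal is correct and follows essentially the same route as the paper: define $\eta_{L,M}$ coordinate-wise, use the conjugacy hypothesis on $\pi_u\times\eta_Z$ (resp.\ $\pi_s\times\eta_Y$) to show the relevant fibre-product map is a conjugacy, and then obtain the chain map by combining ordinary functoriality of the induced maps with Theorem \ref{mod3.5.11} applied to those squares. The only cosmetic differences are that you argue for general $(L,M)$ and package surjectivity and injectivity as uniqueness of the missing coordinate, whereas the paper treats the case $L=M=1$ in two separate steps and leaves the rest as analogous.
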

\begin{proof}
Assume $\eta$ is s-bijective. The u-bijective case is similar and we omit the details.
We consider the case of $L=M=1$, the proof of the general case is analogous. We must show that $\eta_{L,M} \times \delta_{,m}$ is a conjugacy.  It is clear that it intertwines the dynamics, so it suffices to show that it is a bijection.

We first show that $\eta_{L,M} \times \delta_{,m}$ is surjective.  Let $a = (y'_0,y'_1,z'_0,z'_1) \in \Sigma_{1,1}(\pi')$ and let $b = (y_0,y_1,z_0) \in \Sigma_{1,0}(\pi)$. This implies that 
\begin{itemize}
\item $\pi'_s(y'_0)=\pi'_s(y'_1)=\pi'_u(z'_0)=\pi'_u(z'_1)$
\item $\pi_s(y_0) = \pi_s(y_1) = \pi_u(z_0)$
\item $(y'_0,y'_1,z_0) = (\eta_Y(y_0),\eta_Y(y_1),\eta_Z(z_0))$
\end{itemize}
we will show that $(a,b)$ is the image of some $c \in \Sigma_{1,1}(\pi)$ under the map $\eta_{1,1} \times \delta_{,m}$.  Let $c = (y_0,y_1,z_0,z)$ where $y_0,y_1,z_0$ are as above, and $z$ is given as follows.
Since
\[
\eta_X(\pi_s(y_1)) = \pi'_s(\eta_Y(y_1)) = \pi'_s(y'_1) = \pi'_u(z'_1)
\]
we have that $(z'_1,\pi_s(y_1)) \in \fib{(Z',\zeta')}{\pi'_u}{\eta_X}{(X,\phi)}$. Now, $\eta_Z \times \pi_u : (Z,\zeta) \to (Z',\zeta')_{\pi'_u}\times_{\eta_X} (X,\phi)$ is onto, so there exists $z \in Z$ such that $(\eta_Z(z),\pi_u(z)) = (z'_1,\pi_s(y_1))$.
We must now show that $c = (y_0,y_1,z_0,z) \in \Sigma_{L,M}(\pi_s,\pi_u)$. We have that $\pi_s(y_0) = \pi_s(y_1)=\pi_u(z_0)$ holds since $(y_0,y_1,z_0) \in \Sigma_{1,0}(\pi)$, and from the definition of $z$, we have $\pi_u(z) = \pi_s(y_1)$. So we compute 
\[
\eta_{1,1} \times \delta_{,m}(y_0,y_1,z_0,z)  = \big((\eta_Y(y_0),\eta_Y(y_1),\eta_Z(z_0),\eta_Z(z)),(y_0,y_1,z_0)\big) = \big((y'_0,y'_1,z'_0,z'_1),(y_0,y_1,z_0)\big),
\]
and $\eta_{L,M} \times \delta_{,m}$ is surjective.

We now show that $\eta_{L,M} \times \delta_{,m}$ is injective.  Let $a = (y_0,y_1,z_0,z_1)$ and $\tilde{a} = (\tilde{y_0},\tilde{y_1},\tilde{z_0},\tilde{z_1}) \in \Sigma_{1,1}(\pi)$, with $(\eta_{L,M} \times \delta_{,m})(a) = (\eta_{L,M} \times \delta_{,m})(\tilde{a})$. Then, since $\delta_{,1}(\tilde{a}) = \delta_{,1}(a)$ we have $(y_0,y_1,z_0) = (\tilde{y_0},\tilde{y_1},\tilde{z_0})$ so we need only show that $z_1 = \tilde{z_1}$.  Now $\eta_{1,1}(\tilde{a}) = \eta_{1,1}(a)$ implies $\eta_Z(z_1) = \eta_Z(\tilde{z_1})$, and because $y_1 = \tilde{y_1}$ and $\pi_s$, we have $\pi_s(y_1) = \pi_s(\tilde{y_1})$. Thus
\[
\pi_u(z_1) = \pi_s(y_1) = \pi_s(\tilde{y_1}) = \pi_u(\tilde{z_1}),
\]
and we have shown that both $\eta_Z(z_1) = \eta_Z(\tilde{z_1})$ and $\pi_u(z_1) = \pi_u(\tilde{z_1})$. Since $\eta_Z \times \pi_u$ is injective by hypothesis, we must have that $z_1 \sim_s \tilde{z_1}$. So we have proven that $\tilde{a} = a$, and $\eta_{L,M} \times \delta_{,m}$ is injective.

Applying Theorem \ref{mod3.5.11} shows that 
\[
\delta_{,m}^{s\ast} \circ \eta_{L,M-1}^s = \eta_{L,M}^s \circ \delta_{,m}^{s\ast}: D^s(\Sigma_{L,M-1}(\pi)) \to D^s(\Sigma_{L,M}(\pi')).
\]
In other words, $\eta$ induces a chain map on the double complexes used to define $H^s(\pi)$ and $H^s(\pi')$.

That the construction is functorial follows as in the proof of \cite[Theorem 5.4.1]{put}.
\end{proof}

In the case that $\eta$ is s-bijective (u-bijective), we will define $\eta^s: H^s(X,\phi) \to H^s(X',\phi')$ (respectively $\eta^{s*}: H^s(X',\phi') \to H^s(X,\phi)$) using only s/u-bijective pairs which satisfy the hypotheses of the previous theorem (also see the statement of Theorem \ref{mod5.4.1}). Our next two results show:
\begin{enumerate}
\item the existence of s/u-bijective pairs satisfying the previous theorem, and
\item the definition of $\eta^s$ (and $\eta^{s*}$) is ``natural" with respect to the construction of $H^s(X,\phi)$. 
\end{enumerate}
The next theorem is (more or less) \cite[Theorem 5.4.2]{put}. However, since it answers a rather natural and important question, we include it and its proof.
\begin{thm}
Let $\eta_X:(X,\phi) \to (X',\phi')$ be a factor map between non-wandering Smale spaces. 
 If $\eta_X$ is s-bijective (u-bijective), then there exist s/u-bijective pairs for $(X,\phi)$ and $(X',\phi')$ which satisfy the appropriate hypotheses of Theorem \ref{naturality}.
\end{thm}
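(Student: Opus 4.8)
The statement is essentially \cite[Theorem 5.4.2]{put}, and the plan is to reprove it while arranging the construction so that the extra conjugacy hypothesis of Theorem \ref{naturality} appears for free. I treat the case where $\eta_X$ is s-bijective; the u-bijective case is dual. First apply \cite[Theorem 2.6.3]{put} to $(X,\phi)$ to obtain a Smale space $(Y,\psi)$ with totally disconnected unstable sets together with an s-bijective factor map $\pi_s\colon(Y,\psi)\to(X,\phi)$, and apply it again to $(X',\phi')$, keeping only the $Z$-part: a Smale space $(Z',\zeta')$ with totally disconnected stable sets and a u-bijective factor map $\pi_u'\colon(Z',\zeta')\to(X',\phi')$.

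Now make the cheapest possible choice on the $Y$-side: set $(Y',\psi')=(Y,\psi)$, $\eta_Y=\id_Y$, and $\pi_s'=\eta_X\circ\pi_s$. A composition of s-bijective factor maps is again an s-bijective factor map, so $\pi_s'$ is legitimate, the left-hand square of \eqref{com_diag_nat} commutes trivially, and no conjugacy is required here. On the $Z$-side set $(Z,\zeta):=\fib{(X,\phi)}{\eta_X}{\pi_u'}{(Z',\zeta')}$ with coordinate projections $\pi_u\colon(Z,\zeta)\to(X,\phi)$ and $\eta_Z\colon(Z,\zeta)\to(Z',\zeta')$; this is a Smale space by \cite[Theorem 2.4.1]{put}. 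Then $\eta_Z$ is an s-bijective factor map by \cite[Theorem 2.5.13]{put} (the s-bijective leg being $\eta_X$), $\pi_u$ is a u-bijective factor map by its unstable and coordinate-symmetric analogue (the u-bijective leg being $\pi_u'$), and both are onto since $\eta_X$ and $\pi_u'$ are. Each local stable set $Z^s(z)$ is totally disconnected, because $\eta_Z$ restricts on it to a continuous injection into the totally disconnected set $(Z')^s(\eta_Z(z))$, and a continuous injection into a totally disconnected space has totally disconnected domain. The right-hand square of \eqref{com_diag_nat} commutes by the defining relation of the fibre product, and --- the crucial point --- the map $\pi_u\times\eta_Z\colon(Z,\zeta)\to\fib{(X,\phi)}{\eta_X}{\pi_u'}{(Z',\zeta')}$ is literally the identity, hence a conjugacy. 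Thus $\pi=(Y,\psi,\pi_s,Z,\zeta,\pi_u)$ and $\pi'=(Y,\psi,\pi_s',Z',\zeta',\pi_u')$ are s/u-bijective pairs, $\eta=(\eta_X,\id_Y,\eta_Z)$ is a triple of s-bijective factor maps making \eqref{com_diag_nat} commute with $\pi_u\times\eta_Z$ a conjugacy, so the hypotheses of Theorem \ref{naturality} hold.

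The u-bijective case is obtained by interchanging the two legs: apply \cite[Theorem 2.6.3]{put} to extract a $Z$-part $\pi_u\colon(Z,\zeta)\to(X,\phi)$ of $X$ and a $Y$-part $\pi_s'\colon(Y',\psi')\to(X',\phi')$ of $X'$, set $(Z',\zeta')=(Z,\zeta)$, $\eta_Z=\id_Z$, $\pi_u'=\eta_X\circ\pi_u$, and take $(Y,\psi)=\fib{(X,\phi)}{\eta_X}{\pi_s'}{(Y',\psi')}$, so that $\pi_s\times\eta_Y$ becomes the identity. The main point, and the closest thing to an obstacle, is to recognize that the construction should be asymmetric --- one leg of $\eta$ an identity map, the other an honest fibre product --- which is precisely what forces the relevant conjugacy condition automatically; the remaining checks (s-/u-bijectivity of the projections out of a fibre product, and total disconnectedness of its local stable or unstable sets) are routine consequences of \cite[Theorem 2.4.1]{put} and \cite[Theorem 2.5.13]{put}. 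This arrangement also avoids a pitfall: a more symmetric construction would want to apply \cite[Theorem 2.6.3]{put} to an auxiliary fibre product such as $\fib{(Y',\psi')}{\pi_s'}{\eta_X}{(X,\phi)}$, which need not be non-wandering, whereas here \cite[Theorem 2.6.3]{put} is only ever invoked for the given non-wandering Smale spaces $(X,\phi)$ and $(X',\phi')$.
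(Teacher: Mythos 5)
Your construction is exactly the one in the paper: keep the $Y$-part fixed with $\eta_Y=\id$ and $\pi_s'=\eta_X\circ\pi_s$, and take the $Z$-part of the pair for $(X,\phi)$ to be the fibre product $\fib{(Z',\zeta')}{\pi'_u}{\eta_X}{(X,\phi)}$ with $\eta_Z$ and $\pi_u$ the two projections, so that $\pi_u\times\eta_Z$ is tautologically a conjugacy. The proposal is correct and simply spells out the routine verifications (s-/u-bijectivity of the projections, total disconnectedness of the stable sets) that the paper delegates to the proof of Putnam's Theorem 5.4.2.
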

\begin{proof}
We prove only the case that $\eta_X$ is s-bijective. The u-bijective case is analogous.
Let $(Z',\zeta')$ be any choice of ``u-part'' of an s/u-bijective pair for $(X',\phi')$, and $(Y,\psi)$ be any choice of ``s-part'' of an s/u-bijective pair for $(X,\phi)$ (the existence of these is guaranteed by \cite[Theorem 2.6.3]{put}). We then have the following commutative diagram of s/u-bijective pairs for $(X,\phi)$ and $(X',\phi')$
\[
\begin{CD}
(Y,\psi) @>>\pi_s> (X,\varphi) @<<P_2< \fib{(Z',\zeta')}{\pi'_u}{\eta_X}{(X,\phi)}\\
@VV id V @VV \eta_X V @VV P_1 V\\
(Y,\psi) @>> \eta_X \circ \pi_s > (X',\varphi') @<< \pi'_u< (Z',\zeta')\\
\end{CD}
\]
That these form s/u-bijective pairs follows from the proof of \cite[Theorem 5.4.2]{put}.  The extra hypothesis that the upper right hand corner be conjugate to the fibre product is immediate.
\end{proof}

\begin{thm}\label{main_naturality}
Let $\eta_X:(X,\phi) \to (X',\phi')$ be a factor map between non-wandering Smale spaces along with the following commuting diagrams 
\[
\begin{CD}
(Y,\psi) @>>\pi_s> (X,\varphi) @<<\pi_u< (Z,\zeta)\\
@VV \eta_Y V @VV \eta_X V @VV \eta_Z V\\
(Y',\psi') @>> \pi'_s > (X',\varphi') @<<\pi'_u< (Z',\zeta')\\
\end{CD}
\ \ \textrm{and} \ \ 
\begin{CD}
(\tilde{Y},\tilde{\psi}) @>>\tilde{\pi}_s> (X,\varphi) @<<\tilde{\pi}_u< (\tilde{Z},\tilde{\zeta})\\
@VV \tilde{\eta}_Y V @VV \eta_X V @VV \tilde{\eta}_Z V\\
(\tilde{Y}',\tilde{\psi}') @>> \tilde{\pi}'_s > (X',\varphi') @<<\tilde{\pi}'_u< (\tilde{Z}',\tilde{\zeta}')\\
\end{CD}
\]
\begin{enumerate}
\item If $\eta_X$ is s-bijective, and the above diagrams represent two different sets of data which both satisfy the hypotheses of Theorem \ref{naturality}, then
\[
\Theta' \circ \eta^s = \tilde{\eta}^s \circ \Theta
\]
where $\eta^s$ is defined via the the first diagram (via $\eta = (\eta_X,\eta_Y,\eta_Z$)), $\tilde{\eta}^s$ is defined via the second diagram (via $\tilde{\eta} = (\eta_X,\tilde{\eta_Y},\tilde{\eta_Z}$)), and $\Theta': H^s(\pi') \to H^s(\tilde{\pi}')$ and $\Theta: H^s(\pi) \to H^s(\tilde{\pi})$ are the isomorphisms described in the proof of \cite[Theorem 5.5.1]{put}.
\item If $\eta_X$ is u-bijective, and the above diagrams represent two different sets of data which both satisfy the hypotheses of Theorem \ref{naturality}, then
\[
\Theta \circ \eta^{s*} = \tilde{\eta}^{s*} \circ \Theta'
\]
where $\eta^{s*}$ is defined via the the first diagram (via $\eta = (\eta_X,\eta_Y,\eta_Z$)), $\tilde{\eta}^{s*}$ is defined via the second diagram (via $\tilde{\eta} = (\eta_X,\tilde{\eta_Y},\tilde{\eta_Z}$)), and $\Theta': H^s(\pi') \to H^s(\tilde{\pi}')$ and $\Theta: H^s(\pi) \to H^s(\tilde{\pi})$ are the isomorphisms described in the proof of \cite[Theorem 5.5.1]{put}.
\end{enumerate}
\end{thm}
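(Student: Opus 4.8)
The plan is to reduce the statement to a concrete computation with Krieger dimension groups at the level of the shifts of finite type $\Sigma_{L,M}$, exactly as in Putnam's proof of independence (\cite[Theorem 5.5.1]{put}) and its interaction with the functoriality results. First I would recall precisely how the isomorphisms $\Theta$ and $\Theta'$ are built: in \cite[Section 5.5]{put} one compares two s/u-bijective pairs by passing through a common refinement, obtained as a fibre product of the two pairs; the isomorphism on homology is induced by the chain maps associated to the projection factor maps from the refining pair down to each of the two given pairs, and these projection maps are s-bijective (resp. u-bijective) on the appropriate factors by \cite[Theorem 2.5.13]{put}. So $\Theta$ is, up to the identifications of homology with the cohomology of the double complex, a composite of induced maps $\rho^s$ and $(\rho')^{s\ast}$ coming from such projections.

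The key step is then to show that the diagram of $\Sigma_{L,M}$'s obtained by combining $\eta$ (resp. $\tilde\eta$) with the refinement projections is again a commuting square satisfying the conjugacy hypothesis of Theorem \ref{mod3.5.11}, so that the relevant squares of dimension-group maps commute. Concretely, for the s-bijective case in (1), I would fix $L,M\ge 0$ and form the square whose four corners are $\Sigma_{L,M}(\pi)$, $\Sigma_{L,M}(\pi')$, $\Sigma_{L,M}(\hat\pi)$ (the refinement of $\pi$ and $\tilde\pi$), and $\Sigma_{L,M}(\hat\pi')$; one map is $\eta_{L,M}$, one is the refinement projection, and one checks, by the same explicit surjectivity/injectivity argument used in the proof of Theorem \ref{naturality} (chasing coordinates $y_i,z_j$ and using that $\eta_Z\times\pi_u$ and the analogous refinement map are conjugacies), that the appropriate fibre-product comparison map is a conjugacy. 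Applying Theorem \ref{mod3.5.11} to this square yields the commuting square of dimension-group homomorphisms $\delta^{s\ast}$ and $\delta^s$ that, after passing to the $S_{L+1}\times S_{M+1}$-equivariant quotients $D^s_{\QQ,\AA}$ and assembling over all $(L,M)$ with $L-M=N$, gives $\Theta'\circ\eta^s=\tilde\eta^s\circ\Theta$ on $H^s_N$. The u-bijective case (2) is formally dual: one uses the $\delta_{l,\,}$ maps in place of the $\delta_{\,,m}$ maps, the hypothesis that $\pi_s\times\eta_Y$ is a conjugacy, and the contravariant induced maps $\eta^{s\ast}$, $\Theta$, $\Theta'$; the direction of the composite is reversed accordingly.

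The main obstacle I anticipate is bookkeeping rather than a genuine new idea: one must be careful that the various squares that arise — mixing an $\eta$-type map (which moves the base Smale space from $X$ to $X'$) with a refinement-type projection (which moves the pair but fixes the base) — genuinely satisfy \emph{both} that the relevant legs are s-bijective/u-bijective in the right slots and that the fibre-product comparison map is onto, before one can invoke Theorem \ref{ontoCloseOneToOne} or check conjugacy directly. In particular the interaction noted throughout the paper, that s-bijective and u-bijective maps behave differently under fibre products (\cite[Theorems 2.5.13, 2.5.14]{put}), forces a case split according to whether one is looking at a $\delta_{l,\,}$ square or a $\delta_{\,,m}$ square, and one must verify that the conjugacy hypothesis imposed in Theorem \ref{naturality} on $\eta$ propagates to the refinements constructed in \cite[Section 5.5]{put}; once that propagation is established, the rest is a diagram chase combining Theorem \ref{mod3.5.11}, Theorem \ref{naturality}, and the functoriality in part three of Theorem \ref{mod5.4.1}.
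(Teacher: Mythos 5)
Your proposal follows essentially the same route as the paper: both pass through the common-refinement (fibre product) pairs underlying the isomorphisms $\Theta$, $\Theta'$ from \cite[Theorem 5.5.1]{put}, verify that the conjugacy hypothesis propagates to the squares mixing $\eta$ (resp.\ $\tilde\eta$) with the refinement projections so that Theorem \ref{naturality} and Theorem \ref{mod3.5.11} apply, and then assemble the resulting commuting squares of dimension-group maps over the double complex. The only organisational difference is that the paper carries out the refinement one factor at a time (first with the $Y$-parts equal and refining the $Z$-parts via $\eta_Z\times\tilde\eta_Z$, then the easier $Y$-part case, using transitivity), which is the bookkeeping you correctly anticipated.
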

\begin{proof}
We prove only the case that $\eta_X$ is s-bijective, the u-bijective case is similar. 

We first consider the case that $Y = \tilde{Y}$, $\psi = \tilde{\psi}$, $\pi_s = \tilde{\pi}_s$, and $Y' = \tilde{Y}'$, $\psi' = \tilde{\psi}'$, $\pi'_s = \tilde{\pi}'_s$. Let  We denote $(\eta_X,\eta_Y,\eta_Z\times\tilde{\eta}_Z)$ by $\tilde{\tilde{\eta}}$,
and show that the diagram
\begin{equation}\label{Diag:CD_proofNat}
\begin{CD}
(Y,\psi) @>>\pi_s> (X,\varphi) @<<\tilde{\pi}_u \circ P_2< \fib{(Z,\zeta)}{\pi_u}{\tilde{\pi}_u}{(\tilde{Z},\tilde{\zeta})}\\
@VV \eta_Y V @VV \eta_X V @VV \eta_Z \times \tilde{\eta}_Z V\\
(Y',\psi') @>> \pi'_s > (X',\varphi') @<<\tilde{\pi}'_u \circ P'_2< \fib{(Z',\zeta')}{\pi'_u}{\tilde{\pi}'_u}{(\tilde{Z}',\tilde{\zeta}')}\\
\end{CD}
\end{equation}
satisfies the hypotheses of Theorem \ref{naturality} and hence can be used to define a map ($\tilde{\tilde{\eta}}^s$) on homology.  We need only check that $(\tilde{\pi}_u \circ P_2) \times (\eta_Z \times \tilde{\eta}_Z)$ is a conjugacy onto the fibre product.

To show surjectivity, suppose $(x,z',\tilde{z}') \in \fib{(X,\varphi)}{\eta_X}{\tilde{\pi}'_u \circ P'_2}{\big(\fib{(Z',\zeta')}{\pi'_u}{\tilde{\pi}'_u}{(\tilde{Z}',\tilde{\zeta}')}\big)}$, then $(x,\tilde{z}') \in \fib{(X,\phi)}{\eta_X}{\tilde{\pi}'_u}{(\tilde{Z}',\tilde{\zeta}')}$ and $(x,z') \in \fib{(X,\phi)}{\eta_X}{\pi'_u}{(Z',\zeta')}$. Now recall that the right-hand squares in each of the diagrams in the statement of this theorem satisfy the conjugacy condition, hence there exist $z \in Z$ and $\tilde{z} \in \tilde{Z}$ such that $(\pi_u \times \eta_Z)(z) = (x,z')$ and $(\tilde{\pi}_u \times \tilde{\eta}_Z)(\tilde{z}) = (x,\tilde{z}')$.  We now have that 
\[
\big((\tilde{\pi}'_u \circ P'_2) \times (\eta_Z \times \tilde{\eta}_Z)  \big)(z,\tilde{z}) = (x,z',\tilde{z}') 
\]
and hence the map is surjective.

Now we show that $(\tilde{\pi}_u \circ P_2) \times (\eta_Z \times \tilde{\eta}_Z)$ is injective. Suppose $(z_1,\tilde{z}_1),(z_2,\tilde{z}_2) \in \fib{(Z,\zeta)}{\pi_u}{\tilde{\pi}_u}{(\tilde{Z},\tilde{\zeta})}$ such that $(\tilde{\pi}_u \circ P_2)(z_1,\tilde{z}_1) = (\tilde{\pi}_u \circ P_2)(z_2,\tilde{z}_2)$ and $(\eta_Z \times \tilde{\eta}_Z)(z_1,\tilde{z}_1) = (\eta_Z \times \tilde{\eta}_Z)(z_2,\tilde{z}_2)$.  Then
\[
\pi_u(z_1) = \tilde{\pi}_u(\tilde{z}_1) = (\tilde{\pi}_u \circ P_2)(z_1,\tilde{z}_1) = (\tilde{\pi}_u \circ P_2)(z_2,\tilde{z}_2) = \tilde{\pi}_u(\tilde{z}_2) = \pi_u(z_2).
\]
Also, $\eta_Z(z_1) = \eta_Z(z_2)$ and, $\tilde{\eta}_Z(\tilde{z}_1) = \tilde{\eta}_Z(\tilde{z}_2)$. Now, recalling that the two diagrams in the statement of the theorem each satisfy the conjugacy condition, we see that $z_1 = z_2$ and $\tilde{z}_1 = \tilde{z}_2$, so $(z_1,\tilde{z}_1) = (z_2,\tilde{z}_2)$ and the map is injective.

We can therefore define a map $\tilde{\tilde{\eta}}^s$ on homology using diagram \ref{Diag:CD_proofNat} (via $\tilde{\tilde{\eta}} = (\eta_X,\eta_Y,\eta_Z\times\tilde{\eta}_Z)$). Notice also that if we apply $P_1$ and $P'_1$ to the fibred products on the right hand side of this diagram (and the identity map everywhere else) we get the following diagram
\[
\begin{CD}
(Y,\psi) @>>\pi_s> (X,\varphi) @<<\pi_u< (Z,\zeta)\\
@VV \eta_Y V @VV \eta_X V @VV \eta_Z V\\
(Y',\psi') @>> \pi'_s > (X',\varphi') @<<\pi'_u< (Z',\zeta')\\
\end{CD}
\]
hence $P_1$ induces an isomorphism 
\[
\Theta_{P_1}: H^s(Y,\pi_s, \fib{(Z,\zeta)}{\pi_u}{\tilde{\pi}_u}{(\tilde{Z},\tilde{\zeta})},\tilde{\pi}_u \circ P_2) \to H^s(Y, \pi_s, Z, \pi_u)
\]
and  $P'_1$ induces an isomorphism 
\[
\Theta_{P'_1}: H^s(Y',\pi'_s, \fib{(Z',\zeta')}{\pi'_u}{\tilde{\pi}'_u}{(\tilde{Z}',\tilde{\zeta}')},\tilde{\pi}'_u \circ P'_2) \to H^s(Y', \pi'_s, Z', \pi'_u),
\]
see \cite[Theorem 5.5.1]{put} for details.  It suffices to show that $\Theta_{P'_1} \circ \tilde{\tilde{\eta}}^s = \eta^s \circ \Theta_{P_1}$. In addition we must prove a similar result for $P_2$ and $P'_2$ and then use transitivity, however, the proof is analogous, and hence omitted. 

In order to prove $\Theta_{P'_1} \circ \tilde{\tilde{\eta}}^s = \eta^s \circ \Theta_{P_1}$ we begin by showing that $P_1$ induces a map on the double complexes (that the induced map is an isomorphism follows from \cite[Theorem 5.5.1]{put}).  Since $P_1$ is u-bijective, it suffices to show that 
\[
\begin{CD}
\Sigma_{L,M}(\pi_s,\tilde{\pi}_u \circ P_2) @>>\tilde{\delta}_{l,}> \Sigma_{L-1,M}(\pi_s,\tilde{\pi}_u\circ P_2)\\
@VV P_1 V @VV P_1 V \\
\Sigma_{L,M}(\pi_s,\pi_u) @>>\delta_{l,}> \Sigma_{L-1,M}(\pi_s,\pi_u)\\
\end{CD}
\]
satisfies the hypotheses of Theorem \ref{naturality}, i.e. that 
\[
\tilde{\delta}_{l,} \times P_1: \Sigma_{L,M}(\pi_s,\tilde{\pi}_u\circ P_2) \to \fib{\Sigma_{L-1,M}(\pi_s,\tilde{\pi}_u\circ P_2)}{P_1}{\delta_{l,}}{\Sigma_{L,M}(\pi_s,\pi_u)}
\]
is a conjugacy.

Consider the map $\Phi: \fib{\Sigma_{L-1,M}(\pi_s,\tilde{\pi}_u\circ P_2)}{P_1}{\delta_{l,}}{\Sigma_{L,M}(\pi_s,\pi_u)} \to \Sigma_{L,M}(\pi_s,\tilde{\pi}_u\circ P_2)$ given by 
\[
\Phi(\delta_{l}(y_i)_{i=0}^L,(\tilde{z}_j)_{j=0}^M,(y_i)_{i=0}^L,(\tilde{\pi}_u(\tilde{z}_j))_{j=0}^M) = ((y_i)_{i=0}^L,(\tilde{z}_j)_{j=0}^M),
\]
where $\delta_l$ deletes the $l$th coordinate as in \eqref{delete_n_Cech}.
It is straightforward to check that $\Phi$ is the inverse of $\tilde{\delta}_{l,} \times P_1$.  Therefore, the above diagram satisfies the additional hypotheses of Theorem \ref{naturality} and induces a map on the double complexes, and hence this map is an isomorphism (as in the proof of \cite[Theorem 5.5.1]{put}).

Now consider the diagram
\[
\begin{CD}
\fib{(Z,\zeta)}{\pi_u}{\tilde{\pi}_u}{(\tilde{Z},\tilde{\zeta})} @>>\eta_Z \times \tilde{\eta}_Z> \fib{(Z',\zeta')}{\pi'_u}{\tilde{\pi}'_u}{(\tilde{Z}',\tilde{\zeta}')}\\
@VV P_1 V @VV P_1' V\\
(Z,\zeta) @>>\eta_Z> (Z',\zeta')\\
\end{CD}
\]
we must show that $(\eta_Z \times \tilde{\eta}_Z) \times P_1$ is a conjugacy onto the fibre product.

For surjectivity, suppose $(z',\tilde{z}',z) \in \fib{(\fib{(Z',\zeta')}{\pi'_u}{\tilde{\pi}'_u}{(\tilde{Z}',\tilde{\zeta}')})}{P'_1}{\eta_Z}{(Z,\zeta)}$, then the fiber product conditions imply $z' = \eta_Z(z)$ and $ \pi'_u(z') =\tilde{\pi}'_u(\tilde{z}')$. Hence an arbitrary element looks like $(\eta_Z(z),\tilde{z}',z)$ where $\tilde{\pi}'_u(\tilde{z}') = \pi'_u(\eta_Z(z)) = \eta_X(\pi_u(z))$. So $(\pi_u(z),\tilde{z}') \in \fib{(X,\phi)}{\eta_X}{\pi'_u}{(\tilde{Z}',\tilde{\zeta}')}$ and since 
\[
\tilde{\pi}_u \times \tilde{\eta}_Z: (\tilde{Z},\tilde{\zeta}) \to \fib{(X,\phi)}{\eta_X}{\pi'_u}{(\tilde{Z}',\tilde{\zeta}')}
\]
is onto by assumption, there exists $\tilde{z} \in (\tilde{Z},\tilde{\zeta})$ such that $\tilde{\pi}_u \times \tilde{\eta}_Z(\tilde{z})=(\pi_u(z),\tilde{z}')$. By construction, $\tilde{\pi}_u(\tilde{z}) = \pi_u(z)$, so $(z,\tilde{z}) \in (Z,\zeta)_{\pi_u}\times_{\tilde{\pi}_u}(\tilde{Z},\tilde{\zeta})$.  Moreover
$$
\big((\eta_Z \times \tilde{\eta}_Z )\times P_1\big)(z,\tilde{z}) = (\eta_Z(z), \tilde{\eta}_Z (z), z) = (\eta_Z(z) , \tilde{z}',z).
$$
Hence $(\eta_Z \times \tilde{\eta}_Z )\times P_1$ is surjective.

For injectivity, suppose $(z_1, \tilde{z}_1), (z_2, \tilde{z}_2) \in \fib{(Z,\zeta)}{\pi_u}{\tilde{\pi}_u}{(\tilde{Z},\tilde{\zeta})} $ such that $(\eta_Z \times \tilde{\eta}_Z)(z_1, \tilde{z}_1) = (\eta_Z \times \tilde{\eta}_Z)(z_2, \tilde{z}_2)$ and $P_1(z_1, \tilde{z}_1) = P_1(z_2, \tilde{z}_2)$. In other words $\eta_Z(z_1) = \eta_Z(z_2)$, $\tilde{\eta}_Z(\tilde{z}_1) = \tilde{\eta}_Z(\tilde{z}_2)$, and $z_1 = z_2$. To complete this portion of the proof we must show that $(z_1,\tilde{z}_1) = (z_2,\tilde{z}_2)$.  Since we already have $z_1 = z_2$ we need only show $\tilde{z}_1 = \tilde{z}_2$.  Towards this end, recall that the diagram
\[
\begin{CD}
(\tilde{Z},\tilde{\zeta}) @>> \tilde{\eta}_Z> (\tilde{Z}',\tilde{\zeta}')\\
@VV \tilde{\pi}_u V @VV \tilde{\pi}'_u V\\
(X,\phi) @>>\eta_X> (X',\phi')\\
\end{CD}
\]
satisfies the conjugacy condition.  Moreover, $\tilde{\eta}_Z(\tilde{z}_1) = \tilde{\eta}_Z(\tilde{z}_2)$ and 
\[
\tilde{\pi}_u(\tilde{z}_1) = \pi_u(z_1) = \pi_u(z_2) = \tilde{\pi}_u(\tilde{z}_2).
\]
Thus $\tilde{z}_1 = \tilde{z}_2$ and the map is injective.

We have shown that
\[
\begin{CD}
\fib{(Z,\zeta)}{\pi_u}{\tilde{\pi}_u}{(\tilde{Z},\tilde{\zeta})} @>>\eta_Z \times \tilde{\eta}_Z> \fib{(Z',\zeta')}{\pi'_u}{\tilde{\pi}'_u}{(\tilde{Z}',\tilde{\zeta}')}\\
@VV P_1 V @VV P_1' V\\
(Z,\zeta) @>>\eta_Z> (Z',\zeta')\\
\end{CD}
\]
satisfies the conjugacy condition, and hence, for $L,M \geq 0$, the diagram
\[
\begin{CD}
\Sigma_{L,M}(\pi_s,\tilde{\pi}_u \circ P_2) @>>(\eta_y,\eta_Z \times \tilde{\eta}_Z)> \Sigma_{L,M}(\pi'_s,\tilde{\pi}'_u\circ P'_2)\\
@VV P_1 V @VV P'_1 V \\
\Sigma_{L,M}(\pi_s,\pi_u) @>>(\eta_Y,\eta_Z)> \Sigma_{L,M}(\pi'_s,\pi'_u)\\
\end{CD}
\]
also satisfies the conjugacy condition. Which in turn implies that  $\Theta_{P'_1} \circ \tilde{\tilde{\eta}}^s = \eta^s \circ \Theta_{P_1}$, the required result.

We have therefore proved the result in the case that $Y = \tilde{Y}$, $\psi = \tilde{\psi}$, $\pi_s = \tilde{\pi}_s$, and $Y' = \tilde{Y}'$, $\psi' = \tilde{\psi}'$, $\pi'_s = \tilde{\pi}'_s$.  To prove the general result it suffices to now prove the result in the case that $Z = \tilde{Z}$, $\zeta = \tilde{\zeta}$, $\pi_u = \tilde{\pi}_u$, and $Z' = \tilde{Z}'$, $\zeta' = \tilde{\zeta}'$, $\pi'_u = \tilde{\pi}'_u$.  This result is similar, however the details are less complicated since in this case all of the maps are s-bijective and we can appeal directly to the functoriality of s-bijective maps without needing to use Theorem \ref{naturality}. We omit the details of this part of the proof.
\end{proof}

\begin{thm}\label{auto_naturality}
Suppose $\pi = (Y,\psi,\pi_s,Z,\zeta,\pi_u)$ is an s/u-bijective pair for a non-wandering Smale space $(X,\phi)$. Let 
\begin{align*}
\alpha_X: (X,\phi) & \to (X,\phi) \\
\alpha_Y: (Y,\psi) & \to (Y,\psi) \\
\alpha_Z: (Z,\zeta) & \to (Z,\zeta)
\end{align*}
be automorphisms such that the following diagram commutes:
\begin{equation}\label{com_diag_nat1}
\begin{CD}
(Y,\psi) @>>\pi_s> (X,\varphi) @<<\pi_u< (Z,\zeta)\\
@VV \alpha_Y V @VV \alpha_X V @VV \alpha_Z V\\
(Y,\psi) @>> \pi_s > (X,\varphi) @<<\pi_u< (Z,\zeta).\\
\end{CD}
\end{equation}
Then 
\begin{align}
\label{eq1_auto}
\pi_u \times \alpha_Z&: (Z, \zeta) \to \fib{(X, \phi)}{\alpha_X}{\pi_u}{(Z, \zeta)} \quad \text{ and } \\
\pi_s \times \alpha_Y&: (Y, \psi) \to \fib{(X, \phi)}{\alpha_X}{\pi_s}{(Y, \psi)} \label{eq2_auto}
\end{align}
are conjugacies.
\end{thm}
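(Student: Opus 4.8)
The plan is to show that each of the maps in \eqref{eq1_auto} and \eqref{eq2_auto} is a conjugacy by verifying that it is a bijection intertwining the dynamics; compatibility with the dynamics is automatic since all the maps $\alpha_X$, $\alpha_Y$, $\alpha_Z$, $\pi_s$, $\pi_u$ commute with the respective homeomorphisms. I will treat $\pi_u \times \alpha_Z$ in detail; the argument for $\pi_s \times \alpha_Y$ is identical after exchanging the roles of $s$ and $u$. The key observation is that, because $\alpha_Z$ is an automorphism (in particular a homeomorphism of $Z$), the map $\pi_u \times \alpha_Z$ has an obvious candidate inverse, namely $(x, z') \mapsto \alpha_Z^{-1}(z')$; all that must be checked is that this is well-defined on the fibre product and is a genuine two-sided inverse.

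First I would verify injectivity: if $(\pi_u(z_1), \alpha_Z(z_1)) = (\pi_u(z_2), \alpha_Z(z_2))$, then $\alpha_Z(z_1) = \alpha_Z(z_2)$, and since $\alpha_Z$ is injective we get $z_1 = z_2$. Next I would verify surjectivity: given $(x, z') \in \fib{(X,\phi)}{\alpha_X}{\pi_u}{(Z,\zeta)}$, so that $\alpha_X(x) = \pi_u(z')$, set $z := \alpha_Z^{-1}(z')$. Then $\pi_u(z) = \pi_u(\alpha_Z^{-1}(z'))$, and using the commutativity of \eqref{com_diag_nat1}, which gives $\pi_u \circ \alpha_Z = \alpha_X \circ \pi_u$ and hence $\pi_u \circ \alpha_Z^{-1} = \alpha_X^{-1} \circ \pi_u$, we obtain $\pi_u(z) = \alpha_X^{-1}(\pi_u(z')) = \alpha_X^{-1}(\alpha_X(x)) = x$. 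Thus $(\pi_u \times \alpha_Z)(z) = (\pi_u(z), \alpha_Z(z)) = (x, z')$, proving surjectivity. Finally, since $\pi_u \times \alpha_Z$ is a continuous bijection between compact Hausdorff spaces it is a homeomorphism, and as it intertwines $\zeta$ with the induced dynamics on the fibre product, it is a conjugacy. The argument for $\pi_s \times \alpha_Y$ proceeds verbatim with $\pi_s$, $\alpha_Y$ in place of $\pi_u$, $\alpha_Z$, using the left square of \eqref{com_diag_nat1}.

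There is essentially no serious obstacle here: the entire content is that an automorphism is invertible, so the fibre product over an automorphism collapses. The only point requiring any care is bookkeeping with the commuting diagram to confirm that the candidate inverse lands in the fibre product with the correct first coordinate, i.e.\ that the relation $\pi_u \circ \alpha_Z^{-1} = \alpha_X^{-1} \circ \pi_u$ follows from \eqref{com_diag_nat1}; this is immediate. I would also remark that this theorem is exactly what is needed to feed $\alpha = (\alpha_X, \alpha_Y, \alpha_Z)$ into Theorem \ref{naturality} (in either the s-bijective or u-bijective case, noting that automorphisms are trivially both), so that $\alpha$ induces an endomorphism of $H^s_*(X,\phi)$; this is the connection to Putnam's Lefschetz formula mentioned in Remark \ref{LF_rem}.
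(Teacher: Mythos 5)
Your argument is correct and is essentially the paper's proof: both reduce the statement to bijectivity of $\pi_u \times \alpha_Z$ (and $\pi_s \times \alpha_Y$) and exploit the inverse $(x,z') \mapsto \alpha_Z^{-1}(z')$, with the identity $\pi_u \circ \alpha_Z^{-1} = \alpha_X^{-1} \circ \pi_u$ doing the bookkeeping; the paper phrases it as exhibiting $\Phi = \alpha_Z^{-1}\circ P_2$ as a two-sided inverse rather than checking injectivity and surjectivity separately, which is only a cosmetic difference.
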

\begin{proof}
We prove \eqref{eq1_auto}; the proof of \eqref{eq2_auto} is similar. The commutative diagram \eqref{com_diag_nat1} reduces the proof to showing that $\pi_u \times \alpha_Z$ is bijective. Let $\Phi=\alpha_Z^{-1} \circ P_2:\fib{(X, \phi)}{\alpha_X}{\pi_u}{(Z, \zeta)} \to (Z, \zeta)$ (i.e., $\Phi(x,z)=\alpha_Z^{-1}(z)$).

We show that $\Phi$ is the inverse to $\pi_u \times \alpha_Z$. Indeed, on the one hand, we have
\begin{align*}
\big((\pi_u \times \alpha_Z) \circ \Phi\big)(x,z)&=(\pi_u \times \alpha_Z)(\alpha_Z^{-1}(z)) \\
&= (\pi_u(\alpha_Z^{-1}(z)),z) \\
&= (\alpha_X^{-1}(\pi_u(z)),z) \\
&= (\alpha_X^{-1}(\alpha_X(x)),z) = (x,z),
\end{align*}
where the second to last equality follows because $(x,z) \in \fib{(X, \phi)}{\alpha_X}{\pi_u}{(Z, \zeta)}$. On the other hand,
\[
\big(\Phi \circ (\pi_u \times \alpha_Z)\big)(z)=\Phi(\pi_u(z),\alpha_Z(z))=\alpha_Z^{-1}(\alpha_Z(z))=z.
\]
Hence $\Phi$ is the inverse of $\pi_u \times \alpha_Z$, completing the proof.
\end{proof}

\begin{remark}\label{LF_rem}
Assuming the hypotheses and notation of Theorem \ref{auto_naturality}, we have that the triple $(\alpha_X,\alpha_Y,\alpha_Z)$ and the s/u-bijective pair $\pi$ satisfy the hypotheses of Theorems \ref{mod5.4.1} and \ref{main_naturality}. A particularly relevant example is the case when $\alpha_X=\phi^n$, $\alpha_Y=\psi^n$, and $\alpha_Z=\zeta^n$ for some $n \in \Z$. These triples were used in \cite[Chapter 6]{put} to prove a Lefschetz formula for Smale spaces. Hence Putnam's arguments regarding the Lefschetz formula do not require change, even in light of the issues in \cite[Theorem 4.4.1]{put}. For the Lefschetz formula, Putnam considers both $H^s(X,\phi)$ and $H^u(X,\phi)$; the relevant results for $H^u(X,\phi)$ are discussed in Section \ref{unstable_results}.
\end{remark}

\section{A generalization of Putnam's pullback diagram}\label{main_theorem_section}

In this section we prove a result which generalizes Theorem \ref{mod3.5.11} (i.e., \cite[Theorem 3.5.11]{put}) from shifts of finite type to non-wandering Smale spaces. Putnam's Theorem 3.5.11 is ubiquitous throughout \cite{put} because it shows that Putnam's homology is functorial at the level of the building blocks of the theory. In order to see why this is the case requires a careful inspection of the proof of \cite[Theorem 5.4.1]{put} which heavily relies on \cite[Theorem 4.4.1]{put} whose proof is an almost direct consequence of \cite[Theorem 3.5.11]{put}.

\begin{thm}\label{main_thm}
Suppose 
\[
\begin{CD}
(X,\varphi) @>>\eta_1> (X_1,\varphi_1) \\
@VV \eta_2 V @VV \pi_1 V \\
(X_2,\varphi_2) @>> \pi_2 > (X_0,\varphi_0) \\
\end{CD}
\]
is a commutative diagram of non-wandering Smale spaces in which $\eta_1$ and $\pi_2$ are s-bijective factor maps, and $\eta_2$ and $\pi_1$ are u-bijective factor maps. Moreover, suppose 
$\eta_2 \times \eta_1 : (X, \varphi) \to \fib{(X_2, \varphi_2)}{\pi_2}{\pi_1}{(X_1, \varphi_1)}$ is a conjugacy.
Then
\[
\eta_1^{s} \circ \eta_2^{s*} = \pi_1^{s*} \circ \pi_2^{s}: H_N^s(X_2,\varphi_2) \to H_N^s(X_1,\varphi_1).
\]
\end{thm}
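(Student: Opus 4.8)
The plan is to reduce the statement about Putnam's homology groups $H^s_*$ to the corresponding statement at the level of Krieger dimension groups, namely Theorem \ref{mod3.5.11} ($=$ \cite[Theorem 3.5.11]{put}), by building compatible s/u-bijective pairs for all four Smale spaces in the square and checking that the induced maps on homology are computed by the dimension-group maps appearing in that theorem. First I would choose an s/u-bijective pair $\pi_0 = (Y_0,\psi_0,(\pi_0)_s, Z_0, \zeta_0, (\pi_0)_u)$ for $(X_0,\varphi_0)$ (which exists by \cite[Theorem 2.6.3]{put}), and then pull it back along the maps in the diagram to produce s/u-bijective pairs for $(X_1,\varphi_1)$, $(X_2,\varphi_2)$ and $(X,\varphi)$. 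Concretely, since $\pi_1$ is u-bijective, the fibre product constructions of Section \ref{preliminaries} (in particular \cite[Theorem 2.5.13]{put}) let me form the ``$Z$-part'' over $X_1$ by fibre-producting $Z_0$ with $X_1$, while keeping the ``$Y$-part'' appropriate to $X_1$; dually for $X_2$ using that $\pi_2$ is s-bijective; and over $X$ one takes the joint fibre product. The point of this construction is that it realises the four pairs so that the maps $\eta_1, \eta_2, \pi_1, \pi_2$ and the induced triples of factor maps on the $(Y,Z)$-parts automatically satisfy the conjugacy hypotheses of Theorem \ref{naturality} and Theorem \ref{mod5.4.1} — this is exactly the kind of pullback-of-pairs argument used in the proofs of \cite[Theorem 4.4.1]{put} and Theorem \ref{mod5.4.1} above, and the conjugacy condition on $\eta_2 \times \eta_1$ in the hypothesis is precisely what is needed to propagate conjugacy to the fibre-product level.

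Next, with these compatible pairs in hand, I would identify each of the four maps $\eta_1^s$, $\eta_2^{s*}$, $\pi_1^{s*}$, $\pi_2^s$ on the homology groups $H^s_N$ with the corresponding chain maps on the double complexes $C^s_{\QQ,\AA}(\cdot)$, and then, degree by degree in $(L,M)$, with maps of the form $\delta^{s}$, $\delta^{s*}$ between the dimension groups $D^s(\Sigma_{L,M}(\cdot))$. Here the key input is that the constituent shifts of finite type $\Sigma_{L,M}$ of the pulled-back pairs are themselves fibre products of the $\Sigma_{L,M}$'s of the neighbouring pairs, fitted into commuting squares of s-bijective and u-bijective factor maps to which Theorem \ref{mod3.5.11} applies. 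So for each $(L,M)$ one gets a commuting square of shifts of finite type with the conjugacy property, and Theorem \ref{mod3.5.11} yields $\eta_{1,(L,M)}^{s} \circ \eta_{2,(L,M)}^{s*} = \pi_{1,(L,M)}^{s*} \circ \pi_{2,(L,M)}^{s}$ on $D^s(\Sigma_{L,M}(\pi_2\text{-pair}))$. Passing to the $S_{L+1}\times S_{M+1}$-quotient complexes $C^s_{\QQ,\AA}$ (as in \cite[Definition 5.1.5]{put}) and then to homology gives the equality of the four composite maps $H^s_N(X_2,\varphi_2) \to H^s_N(X_1,\varphi_1)$.

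Finally, I would invoke the naturality/independence results to remove the auxiliary choice of $\pi_0$: Theorem \ref{main_naturality} (together with \cite[Theorem 5.5.1]{put}) shows that the induced maps $\eta_1^s, \eta_2^{s*}, \pi_1^{s*}, \pi_2^s$ defined via these particular pulled-back pairs agree, under the canonical isomorphisms $\Theta$, with the maps defined via any admissible pairs; hence the identity proved for the special pairs descends to an identity of maps $H^s_N(X_2,\varphi_2) \to H^s_N(X_1,\varphi_1)$ between the intrinsic homology groups. I expect the main obstacle to be the bookkeeping in the second paragraph: one must set up the four families of s/u-bijective pairs, the induced s-bijective and u-bijective triples of factor maps between them, and verify all the conjugacy conditions simultaneously and compatibly across all bidegrees $(L,M)$ — keeping straight which maps are s-bijective (inducing $\delta^s$, covariant) and which are u-bijective (inducing $\delta^{s*}$, contravariant), since, as the authors emphasise, the interaction of the two types of maps is the delicate point. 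Once the pullback square of pairs is correctly assembled, the verification that the conjugacy condition of Theorem \ref{mod3.5.11} holds at each $(L,M)$ is essentially the same fibre-product computation repeated, and the descent to homology is formal.
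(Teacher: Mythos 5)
Your outline is essentially the paper's proof: fix an s/u-bijective pair for $(X_0,\varphi_0)$, build compatible pairs over $X_1$, $X_2$ and $X$ by fibre products so that all squares satisfy the conjugacy hypotheses of Theorems \ref{mod5.4.1} and \ref{naturality}, verify bidegree by bidegree that the shifts $\Sigma_{L,M}$ of these pairs form commuting squares of s/u-bijective maps whose product map is a conjugacy (the hypothesis that $\eta_2\times\eta_1$ is a conjugacy is exactly what makes this work), apply Theorem \ref{mod3.5.11} at each $(L,M)$, and descend to the complexes $C^s_{\QQ,\AA}$ and to homology, with naturality handling the choice of pairs. The one correction to your ``concrete'' step is that the roles of the two halves are interchanged and a pure pullback does not suffice: it is the $Y$-part that pulls back along the u-bijective $\pi_1$ (Lemma \ref{Lem1main}, $Y_1=Y_0\times_{X_0}X_1$) and the $Z$-part that pulls back along the s-bijective $\pi_2$, whereas a fibre product $Z_0\times_{X_0}X_1$ has both projections u-bijective, so \cite[Theorem 2.5.13]{put} gives no control on its stable sets being totally disconnected; the $Y$-part over $X_2$ and the $Z$-part over $X_1$ instead require auxiliary choices $\tilde Y_2$, $\tilde Z_1$ (Lemma \ref{Lem2main}), and the pair over $X$ is the constrained triple fibre product \eqref{DoubleFibre} rather than a plain joint fibre product --- with these adjustments (Lemmas \ref{Lem1main}--\ref{Lem4main} and the subsequent lemma on the $\Sigma_{L,M}$) your plan is the paper's argument.
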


We break the proof of the theorem into a number of lemmas. The general strategy is to take an s/u-bijective pair for $(X_0, \varphi_0)$ and construct pairs for $(X_1,\varphi_1)$, $(X_2, \varphi_2)$, and $(X,\varphi)$ whose associated maps fit into a (rather ominous looking, but natural) commutative diagram (see \eqref{3d_diagram} on page \pageref{3d_diagram}).

In the general case, this construction is rather long (although not overly difficult). However, there is a particularly nice special case - namely, when $(X_2,\varphi_2,\pi_2,X_1,\varphi_1,\pi_1)$ is an s/u-bijective pair for $(X_0,\varphi_0)$. In this case, one has that the double complex for $(X_0,\varphi_0)$ contains the complex for $(X_1,\varphi_1)$ along its bottom row, the complex for $(X_2,\varphi_2)$ as its left column, and $(X,\varphi)$ as its lower left entry (see Definition \ref{HomDefn} or \cite[Definition 5.1.11]{put} for the double complex in question). In this case, the proof of Theorem \ref{main_thm} simplifies significantly.

Returning to the general situation, for the remainder of this section, fix the following diagram of non-wandering Smale spaces:
\begin{equation}\label{main_diagram}
\begin{CD}
(X,\varphi) @>>\eta_1> (X_1,\varphi_1) \\
@VV \eta_2 V @VV \pi_1 V \\
(X_2,\varphi_2) @>> \pi_2 > (X_0,\varphi_0). \\
\end{CD}
\end{equation}

\begin{lemma}\label{Lem1main}
Suppose we have the commutative diagram \eqref{main_diagram} satisfying the hypotheses of Theorem \ref{main_thm}. Let $(Y_0, \psi_0)$ be a non-wandering Smale space with totally disconnected unstable sets such that $\rho^{YX}_0 : Y_0 \to X_0$ is s-bijective.
Define 
\[
(Y_1, \psi_1) = \fib{(Y_0, \psi_0)}{\rho^{YX}_0}{\pi_1}{(X_1 , \varphi_1)}.
\]
Then 
\begin{enumerate}
\item $Y_1$ has totally disconnected unstable sets
\item $\rho^{YX}_1 = P_2 : Y_1 \to X_1$ is s-bijective
\item $\pi^{Y}_1 = P_1 : Y_1 \to Y_0$ is u-bijective
\end{enumerate}
\end{lemma}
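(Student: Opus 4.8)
The plan is to verify each of the three conclusions directly from the defining properties of fibre products together with the transfer results from \cite{put} already recorded in the preliminaries (namely \cite[Theorem 2.5.13]{put} and \cite[Theorem 2.2.8]{put}). Throughout, write $Y_1 = \fib{(Y_0,\psi_0)}{\rho^{YX}_0}{\pi_1}{(X_1,\varphi_1)}$, so a point of $Y_1$ is a pair $(y_0,x_1)$ with $\rho^{YX}_0(y_0)=\pi_1(x_1)$, and $P_1(y_0,x_1)=y_0$, $P_2(y_0,x_1)=x_1$.

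First I would handle (2) and (3), since they are immediate applications of \cite[Theorem 2.5.13]{put}: in the fibre product diagram, $\pi_1$ is u-bijective by hypothesis, so the opposite projection $P_2=\rho^{YX}_1:Y_1\to X_1$ is s-bijective; and $\rho^{YX}_0$ is s-bijective by hypothesis, so the opposite projection $P_1=\pi^{Y}_1:Y_1\to Y_0$ is u-bijective. (One should note in passing that $Y_1$ is a Smale space by \cite[Theorem 2.4.1]{put}, and that both projections are factor maps because a fibre product of factor maps along factor maps is onto — a routine point, but worth stating.) Next I would prove (1). The cleanest route is to show that $Y_1$ is totally disconnected and invoke \cite[Theorem 2.2.8]{put} — but that would be too strong a claim and is in fact false in general, so instead I would argue directly at the level of unstable sets. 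Fix $(y_0,x_1)\in Y_1$. Since $P_1=\pi^Y_1$ is u-bijective, its restriction $P_1|_{Y_1^u(y_0,x_1)}:Y_1^u(y_0,x_1)\to Y_0^u(y_0)$ is a bijection; moreover $P_1$ is continuous, so it is a continuous bijection from the unstable set of $(y_0,x_1)$ onto the unstable set of $y_0$. The unstable set $Y_0^u(y_0)$ is totally disconnected by hypothesis on $(Y_0,\psi_0)$.

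The remaining point is to promote "continuous bijection onto a totally disconnected space" to "totally disconnected domain." A continuous bijection need not reflect total disconnectedness in general, so here I would use the extra structure: the local unstable sets $Y_1^u((y_0,x_1),\varepsilon)$ form a neighbourhood base for the (locally compact Hausdorff) topology on $Y_1^u(y_0,x_1)$, by \cite[Proposition 2.1.12]{put} as recalled in Section~\ref{Sec:Smale}, and similarly for $Y_0$. One checks that the metric on $Y_1$ is $d((y_0,x_1),(y_0',x_1'))=\max\{d(y_0,y_0'),d(x_1,x_1')\}$ by \cite[Theorem 2.4.1]{put}, so $P_1$ restricted to a small local unstable set of $(y_0,x_1)$ is a homeomorphism onto a local unstable set of $y_0$: it is a continuous injection between compact Hausdorff spaces once $\varepsilon$ is small enough, hence a homeomorphism onto its image, and that image is a (compact) neighbourhood of $y_0$ in $Y_0^u(y_0)$ because $P_1|_{Y_1^u(y_0,x_1)}$ is a bijection onto $Y_0^u(y_0)$ carrying the neighbourhood base to a neighbourhood base (one uses here that $P_1$ also commutes with the dynamics, so $\psi_0^n\circ P_1 = P_1\circ\psi_1^n$, to transfer the "eventually local" characterization of unstable equivalence from \cite[Proposition 2.1.11]{put}). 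Therefore every point of $Y_1^u(y_0,x_1)$ has a neighbourhood base consisting of clopen-in-$Y_0^u(y_0)$, hence clopen-in-$Y_1^u(y_0,x_1)$, sets, which gives total disconnectedness of $Y_1^u(y_0,x_1)$.

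The main obstacle is precisely this last step: making rigorous that total disconnectedness of the unstable sets is inherited along a u-bijective factor map. The two ingredients that make it work are (i) u-bijectivity gives a \emph{global} bijection on unstable sets, not merely injectivity, so one can pull back a full neighbourhood base, and (ii) $P_1$ is proper/closed enough on small local unstable pieces (compactness of local unstable sets) that it restricts to honest homeomorphisms there. I would take care to cite \cite[Proposition 2.1.12]{put} for the topology on global unstable sets and \cite[Proposition 2.1.11]{put} for the relation between global and local unstable sets, and to note the analogous statements for the unstable side are what is being used; once these are in place, conclusions (1)--(3) all follow, and the lemma is proved.
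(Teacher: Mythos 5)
Your handling of (2) and (3) is exactly the paper's: both are immediate from \cite[Theorem 2.5.13]{put}, and your passing remarks about $Y_1$ being a Smale space and the projections being factor maps are fine. For (1) you also start where the paper does — use the u-bijectivity of $P_1=\pi^Y_1$ together with total disconnectedness of $Y_0$'s unstable sets — but you then abandon the direct deduction on the grounds that ``a continuous bijection need not reflect total disconnectedness in general.'' That claim is false: if $f\colon A\to B$ is continuous and \emph{injective} and $C\subseteq A$ is connected with at least two points, then $f(C)$ is a connected subset of $B$ with at least two points; so total disconnectedness (components are singletons) always pulls back along continuous injections. Applied to $P_1|_{Y_1^u(y_0,x_1)}\colon Y_1^u(y_0,x_1)\to Y_0^u(y_0)$, this gives (1) in one line, and this is precisely the paper's (implicit) argument ``(1) follows from (3) and the fact that $Y_0$ has totally disconnected unstable sets.'' What does \emph{not} pull back in general is zero-dimensionality (existence of a clopen base), which may be what you had in mind; but the hypothesis and conclusion here are total disconnectedness, and for the locally compact Hausdorff unstable sets the notions coincide anyway.

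Because of this, the long replacement argument you build is unnecessary, and as written it is also not self-contained: the step where $P_1$ ``carries the neighbourhood base to a neighbourhood base,'' i.e.\ that the image of a small local unstable set $Y_1^u((y_0,x_1),\varepsilon)$ is a neighbourhood of $y_0$ inside $Y_0^u(y_0)$, is an openness-on-unstable-sets property of u-bijective maps that you assert rather than prove (it is true, but requires Putnam's results on resolving maps restricted to stable/unstable sets, not just global bijectivity plus compactness). So: conclusions correct, (2) and (3) as in the paper, but for (1) you rejected the valid elementary argument for an incorrect reason and substituted a heavier argument with an unjustified step; replace it with the two-sentence connectedness argument above and the lemma is proved.
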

\begin{proof}
Parts (2) and (3) follow immediately from \cite[Theorem 2.5.13]{put}.  Part (1) then follows from (3) and the fact that $Y_0$ has totally disconnected unstable sets.
\end{proof}

\begin{lemma}\label{Lem2main}
Suppose we have the commutative diagram \eqref{main_diagram} satisfying the hypotheses of Theorem \ref{main_thm} and  $(\tilde{Y}_2, \tilde{\psi}_2)$ is a non-wandering Smale space with totally disconnected unstable sets such that $\tilde{\rho}^{YX}: \tilde{Y}_2 \to X_2$ is s-bijective. Define
\[
(Y_2, \psi_2) = \fib{(Y_0, \psi_0)}{\rho^{YX}_0}{\pi_2 \circ \tilde{\rho}^{YX}}{(\tilde{Y}_2, \tilde{\psi}_2)},
\] 
then
\begin{enumerate}
\item $Y_2$ has totally disconnected unstable sets
\item $\rho^{YX}_2 = \tilde{\rho}^{YX} \circ P_2 : Y_2 \to X_2$ is s-bijective
\item $\pi^{Y}_2 = P_1 : Y_2 \to Y_0$ is s-bijective
\end{enumerate}
\end{lemma}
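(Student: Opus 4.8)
The plan is to mirror the proof of Lemma \ref{Lem1main} exactly, substituting the composite $\pi_2 \circ \tilde{\rho}^{YX}$ for the single map $\pi_1$ and reading off the structural conclusions from Putnam's fibre-product theorem \cite[Theorem 2.5.13]{put}. First I would record that $(Y_2, \psi_2)$ is a genuine Smale space: this is the fibre product of $(Y_0,\psi_0)$ and $(\tilde{Y}_2,\tilde{\psi}_2)$ over $(X_2,\varphi_2)$ via the maps $\rho^{YX}_0 \colon Y_0 \to X_0$ composed appropriately and $\pi_2 \circ \tilde{\rho}^{YX} \colon \tilde{Y}_2 \to X_2 \to X_0$; wait, the base is $X_0$, so I should be careful: $(Y_2,\psi_2) = Y_0 \tensor[_{\rho^{YX}_0}]{\times}{_{\pi_2\circ\tilde\rho^{YX}}} \tilde{Y}_2$ is the fibre product over $(X_0,\varphi_0)$ with the two legs $\rho^{YX}_0$ and $\pi_2\circ\tilde\rho^{YX}$, both landing in $X_0$. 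By \cite[Theorem 2.4.1]{put} this is a Smale space.

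Next I would establish the three numbered claims. For part (3), apply \cite[Theorem 2.5.13]{put} to the fibre product with the leg $\pi_2 \circ \tilde\rho^{YX}$: since $\pi_2$ is s-bijective (hypothesis of Theorem \ref{main_thm}) and $\tilde\rho^{YX}$ is s-bijective (hypothesis of this lemma), the composite $\pi_2 \circ \tilde\rho^{YX}$ is s-bijective by the functoriality of s-bijective maps stated in Theorem \ref{Krieger_functorial}-style composition (more precisely, compositions of s-bijective factor maps are s-bijective — this follows since the restriction to each stable set is a composite of bijections). Then \cite[Theorem 2.5.13]{put} says that because $\pi_2\circ\tilde\rho^{YX}$ is s-bijective, the opposite projection $P_1 \colon Y_2 \to Y_0$ is s-bijective; this is $\pi^Y_2 = P_1$, giving (3). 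For part (2), note $\rho^{YX}_2 = \tilde\rho^{YX}\circ P_2$ where $P_2 \colon Y_2 \to \tilde Y_2$; since $\rho^{YX}_0$ is s-bijective, \cite[Theorem 2.5.13]{put} gives that $P_2 \colon Y_2 \to \tilde Y_2$ is s-bijective, and composing with the s-bijective map $\tilde\rho^{YX} \colon \tilde Y_2 \to X_2$ yields that $\rho^{YX}_2$ is s-bijective. For part (1), $Y_2$ has totally disconnected unstable sets because $P_1 \colon Y_2 \to Y_0$ is s-bijective, hence (by \cite[Theorem 2.5.6]{put} and the structure of s-bijective maps) $P_1$ restricted to each unstable set is injective; since $Y_0$ has totally disconnected unstable sets, so does $Y_2$. (Alternatively, $\tilde Y_2$ has totally disconnected unstable sets and $P_2$ is s-bijective, hence injective on unstable sets, giving the same conclusion; the cleanest route is whichever of $Y_0$, $\tilde Y_2$ is already assumed totally disconnected — here both are.)

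The main obstacle, such as it is, is bookkeeping rather than mathematics: one must keep straight which projection is $P_1$ and which is $P_2$, and which base space ($X_0$ versus $X_2$) each fibre product is taken over, since the lemma composes a map through $X_2$ before landing in $X_0$. The only genuinely substantive point is invoking that a composite of two s-bijective factor maps is again s-bijective and that \cite[Theorem 2.5.13]{put} then applies to the composite leg — so I would state that composition fact explicitly (it is immediate from the definition, as the restriction to a stable set becomes a composite of bijective maps between stable sets, using that s-bijective maps send stable sets onto stable sets) before quoting \cite[Theorem 2.5.13]{put}. Everything else is a direct transcription of the proof of Lemma \ref{Lem1main}.
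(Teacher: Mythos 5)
Your treatment of parts (2) and (3) is essentially the paper's: both follow at once from \cite[Theorem 2.5.13]{put} once one records that a composition of s-bijective factor maps is again s-bijective, and your identification of the fibre product as being taken over $(X_0,\varphi_0)$ with legs $\rho^{YX}_0$ and $\pi_2\circ\tilde{\rho}^{YX}$ is correct. (The passing appeal to Theorem \ref{Krieger_functorial} for the composition fact is the wrong citation --- that theorem is about induced maps on dimension groups --- but your parenthetical direct argument from the definition is the right justification, so this is only cosmetic.)

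Part (1), however, is argued from a false principle. You claim that $P_1\colon Y_2\to Y_0$ (and, in your alternative, $P_2\colon Y_2\to\tilde{Y}_2$) is injective on unstable sets \emph{because it is s-bijective}. An s-bijective map is bijective on \emph{stable} equivalence classes; it gives no injectivity on unstable sets. Injectivity on unstable sets is exactly the u-resolving condition, and an s-bijective map need not be u-resolving: if it were, every s-bijective factor map between non-wandering Smale spaces would automatically be u-bijective (by the u-analogue of \cite[Theorem 2.5.8]{put}), which already fails for $1$-block codes between shifts of finite type that are left-covering but not right-covering, and for the map $\pi_s$ of a typical s/u-bijective pair. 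The citation of \cite[Theorem 2.5.6]{put} does not help either; it only says the image of an s-bijective map is a Smale space. Note the contrast with Lemma \ref{Lem1main}: there the projection $P_1\colon Y_1\to Y_0$ is \emph{u}-bijective (because the other leg $\pi_1$ is u-bijective), and u-bijective maps are injective on unstable sets, so total disconnectedness does pull back along $P_1$; in Lemma \ref{Lem2main} both projections are only s-bijective and that route is unavailable. The paper's argument for (1) instead uses the product structure directly: since the metric and bracket on the fibre product are coordinatewise, for any $(y_0,\tilde{y}_2)\in Y_2$ and $\ep>0$ one has
\[
Y_2^u\big((y_0,\tilde{y}_2),\ep\big)\subset Y_0^u(y_0,\ep)\times\tilde{Y}_2^u(\tilde{y}_2,\ep),
\]
which is totally disconnected because both factors are (this is where the hypotheses on $Y_0$ and $\tilde{Y}_2$ enter), and any subset of a totally disconnected set is totally disconnected. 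You had both hypotheses in hand, so the repair is short, but the mechanism you invoked does not prove the statement.
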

\begin{proof}
Part (2) and (3) again follow immediately from \cite[Theorem 2.5.13]{put}.  For (1) notice that for any point $(y_0, \tilde{y}_2) \in Y_2$ and any $\epsilon > 0$,  $Y_2^u((y_0, \tilde{y}_2), \epsilon) \subset Y_0^u(y_0, \epsilon) \times \tilde{Y}_2^u(\tilde{y}_2, \epsilon)$, which is totally disconnected as each term in the product is totally disconnected.
\end{proof}

\begin{lemma}\label{Lem3main}
Suppose we have the commutative diagram \eqref{main_diagram} satisfying the hypotheses of Theorem \ref{main_thm}, $(Y_1,\psi_1)$ is the Smale space described in Lemma \ref{Lem1main}, and $(Y_2,\psi_2)$ is the Smale space described in Lemma \ref{Lem2main}. Using fibre products, define a Smale space
\begin{equation}\label{DoubleFibre}
(Y,\psi):=\{(y_2, x, y_1) \in \fib{(Y_2,\psi_2)}{\rho^{YX}_2}{\eta_2}{(X, \varphi)} \fib{}{\eta_1}{\rho^{YX}_1}{(Y_1, \psi_1)} \mid \pi^{Y}_2(y_2) = \pi^{Y}_1(y_1)\},
\end{equation}
then
\begin{enumerate}
\item $Y$ has totally disconnected unstable sets
\item $\eta^Y_2 = P_1: Y \to Y_2$ is u-bijective
\item $\rho^{YX} = P_2: Y \to X$ is s-bijective
\item $\eta^Y_1 = P_3: Y \to Y_1$ is s-bijective
\end{enumerate}
\end{lemma}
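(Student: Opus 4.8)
The plan is to identify $(Y,\psi)$, up to conjugacy, with an ordinary (two-fold) fibre product in two different ways, and then to read off the four assertions from \cite[Theorem 2.5.13]{put} (fibre-product projections inherit s- and u-bijectivity) together with the argument already used in Lemmas \ref{Lem1main} and \ref{Lem2main} to transfer total disconnectedness of unstable sets along a u-bijective map.

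For the first identification, I would observe that in a point $(y_2,x,y_1)$ of $Y$ the coordinate $y_1$ is redundant: since $\pi^Y_1=P_1$ and $\rho^{YX}_1=P_2$ on $Y_1=\fib{(Y_0,\psi_0)}{\rho^{YX}_0}{\pi_1}{(X_1,\varphi_1)}$, the defining relations of $Y$ force $y_1=(\pi^Y_2(y_2),\eta_1(x))$; conversely, for every point $(y_2,x)$ of $A:=\fib{(Y_2,\psi_2)}{\rho^{YX}_2}{\eta_2}{(X,\varphi)}$ the triple $(y_2,x,(\pi^Y_2(y_2),\eta_1(x)))$ lies in $Y$ --- the only nontrivial point, $\rho^{YX}_0(\pi^Y_2(y_2))=\pi_1(\eta_1(x))$, follows from Lemma \ref{Lem2main} (which gives $\rho^{YX}_0\circ\pi^Y_2=\pi_2\circ\rho^{YX}_2$), the relation $\rho^{YX}_2(y_2)=\eta_2(x)$, and the commutativity of \eqref{main_diagram}. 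Hence $(y_2,x,y_1)\mapsto(y_2,x)$ is a continuous bijection $Y\to A$ intertwining the dynamics, hence a conjugacy, under which $\rho^{YX}=P_2$ becomes the projection $A\to X$ and $\eta^Y_2=P_1$ becomes the projection $A\to Y_2$. As $\rho^{YX}_2$ is s-bijective and $\eta_2$ is u-bijective, \cite[Theorem 2.5.13]{put} (and its u-analogue, as applied in Lemma \ref{Lem1main}) gives parts (3) and (2); part (1) then follows exactly as in Lemma \ref{Lem1main}, since $\eta^Y_2$ is u-bijective and $Y_2$ has totally disconnected unstable sets by Lemma \ref{Lem2main}.

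For part (4) I would use a second identification, in which it is instead the coordinate $x$ that is redundant. Given a point $(y_2,y_1)$ of $\fib{(Y_2,\psi_2)}{\pi^Y_2}{\pi^Y_1}{(Y_1,\psi_1)}$, the descriptions of $Y_1$ and $Y_2$ give $\pi_2(\rho^{YX}_2(y_2))=\rho^{YX}_0(\pi^Y_2(y_2))=\rho^{YX}_0(\pi^Y_1(y_1))=\pi_1(\rho^{YX}_1(y_1))$, so $(\rho^{YX}_2(y_2),\rho^{YX}_1(y_1))$ lies in $\fib{(X_2,\varphi_2)}{\pi_2}{\pi_1}{(X_1,\varphi_1)}$; since $\eta_2\times\eta_1$ is a conjugacy there is a unique $x\in X$ with $\eta_2(x)=\rho^{YX}_2(y_2)$ and $\eta_1(x)=\rho^{YX}_1(y_1)$, and then $(y_2,x,y_1)\in Y$. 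Thus $(y_2,x,y_1)\mapsto(y_2,y_1)$ is a conjugacy $Y\to\fib{(Y_2,\psi_2)}{\pi^Y_2}{\pi^Y_1}{(Y_1,\psi_1)}$, under which $\eta^Y_1=P_3$ becomes the projection onto $Y_1$; since $\pi^Y_2:Y_2\to Y_0$ is s-bijective by Lemma \ref{Lem2main}, \cite[Theorem 2.5.13]{put} yields part (4) (and re-derives part (2), $\pi^Y_1$ being u-bijective).

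The only place where something beyond bookkeeping happens is the bijectivity of these two forgetful maps: surjectivity of the first rests on the commutativity of the fixed square \eqref{main_diagram}, while surjectivity of the second is exactly where the hypothesis that $\eta_2\times\eta_1$ is a conjugacy --- and not merely onto --- is used. Accordingly I expect the main obstacle to be organizational rather than mathematical: three different copies of ``$Y_0$-data'' are in play (inside $Y_1$, inside $Y_2$, and the identifications $\pi^Y_1,\pi^Y_2$ between them), and one must keep careful track of which projection corresponds to which of the maps $\eta^Y_1,\eta^Y_2,\rho^{YX}$ under each of the two conjugacies.
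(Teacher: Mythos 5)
Your argument is correct, but it takes a different route from the paper. The paper proves the lemma by direct coordinate bookkeeping: it writes out a generic point $\left((y_0,\tilde{y}_2),x,(y_0,x_1)\right)$ of $Y$, proves (2) by lifting an unstable equivalence in $Y_2$ step by step using the u-bijectivity of $\pi_1$ and $\eta_2$ together with commutativity of \eqref{main_diagram}, declares (3) and (4) analogous (using the s-bijectivity of $\eta_1$, $\pi_2$ and $\tilde{\rho}^{YX}$), and deduces (1) from (2) exactly as you do. You instead exhibit two conjugacies, $Y \cong \fib{(Y_2,\psi_2)}{\rho^{YX}_2}{\eta_2}{(X,\varphi)}$ (the $y_1$-coordinate being redundant) and $Y \cong \fib{(Y_2,\psi_2)}{\pi^Y_2}{\pi^Y_1}{(Y_1,\psi_1)}$ (the $x$-coordinate being redundant), and then quote \cite[Theorem 2.5.13]{put} and its u-analogue, which is precisely how the paper itself handles Lemmas \ref{Lem1main} and \ref{Lem2main}; both identifications check out, including the verification $\rho^{YX}_0(\pi^Y_2(y_2))=\pi_1(\eta_1(x))$ for the first and the use of injectivity and surjectivity of $\eta_2\times\eta_1$ (plus continuity of its inverse) for the second. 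What your approach buys is a cleaner, citation-driven proof and two reusable structural identifications of $Y$; what it costs is that your derivation of (4) invokes the hypothesis that $\eta_2\times\eta_1$ is a conjugacy, which is legitimately available (the lemma assumes the hypotheses of Theorem \ref{main_thm}) but is not needed in the paper's direct argument, where (4) follows from the s-bijectivity of $\eta_1$ and $\tilde{\rho}^{YX}$ alone; so the paper's version of the lemma is marginally more economical in its use of hypotheses, while yours is shorter and more transparent about where \eqref{main_diagram} and the conjugacy assumption actually enter.
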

\begin{proof}
A generic element of $Y$ has the form $\left((y_0,\tilde{y}_2),x,(y'_0,x_1)   \right)$ where
\begin{equation}\nonumber
\rho^{YX}_0(y_0) = \pi_2(\tilde{\rho}^{YX}(\tilde{y}_2)), \rho^{YX}_0(y'_0) = \pi_1(x_1), y_0 = y_0', \tilde{\rho}^{YX}(\tilde{y}_2) = \eta_2(x), \text{ and } \eta_1(x) = x_1,
\end{equation}
which reduces to
\begin{equation}\label{formula2_54}
\pi_1(x_1) = \rho^{YX}_0(y_0) = \pi_2(\tilde{\rho}^{YX}(\tilde{y}_2)), \,\,\,\, \tilde{\rho}^{YX}(\tilde{y}_2) = \eta_2(x), \, \text{ and } \eta_1(x) = x_1.
\end{equation}
To prove (2), suppose that $\left((y_0,\tilde{y}_2),x,(y_0,x_1)   \right) \in Y$ and 
\[
\eta^Y_2\left((y_0,\tilde{y}_2),x,(y_0,x_1)   \right) = (y_0,\tilde{y}_2) \sim_u (y'_0,\tilde{y}'_2) \in Y_2,
\]
from which it follows that $\rho^{YX}_0(y'_0) = \pi_2(\tilde{\rho}^{YX}(\tilde{y}'_2))$.
Since $\pi_1(x_1) = \rho^{YX}_0(y_0) \sim_u \rho^{YX}_0(y'_0)$ and $\pi_1$ u-bijective, we have that there exists a unique $x'_1 \sim_u x$ such that $\pi_1(x'_1) = \rho^{YX}_0(y'_0)$. Similarly, since $\eta_2(x) = \tilde{\rho}^{YX}(\tilde{y}_2) \sim_u \tilde{\rho}^{YX}(\tilde{y}'_2)$ and $\eta_2$ u-bijective, we have that there exists a unique $x' \sim_u x$ such that $\eta_2(x') = \tilde{\rho}^{YX}(\tilde{y}'_2)$.

To complete the proof of (2), we need to show that $\left((y'_0,\tilde{y}'_2),x',(y'_0,x'_1)   \right) \in Y$. For this it suffices to show that $\eta_1(x') = x'_1$.  To this end, observe that 
\[
\pi_1(\eta_1(x')) = \pi_2(\eta_2(x')) = \pi_2(\tilde{\rho}^{YX}(\tilde{y}'_2)) = \rho^{YX}_0(y'_0) = \pi_1(x'_1) \quad \text{ and } \quad \eta_1(x') \sim_u \eta_1(x) = x_1 \stackrel{u}{\sim} x'_1,
\]
and since $\pi_1$ is u-bijective we have that $\eta_1(x') = x'_1$.

The proofs of (3) and (4) are analogous. Now (1) follows from (2) and the fact that $Y_2$ has totally disconnected unstable sets.
\end{proof}

\begin{lemma}\label{Lem4main}
Suppose $(Y,\psi)$, $(Y_0,\psi_0)$, $(Y_1,\psi_1)$, and $(Y_2,\psi_2)$ are as in Lemmas \ref{Lem1main}, \ref{Lem2main}, and \ref{Lem3main}. Then the following diagram commutes:
\begin{equation}\label{commutative_Y}
\begin{CD}
(Y,\psi) @>>\eta^Y_1> (Y_1,\psi_1) \\
@VV \eta^Y_2 V @VV \pi^Y_1 V \\
(Y_2,\psi_2) @>> \pi^Y_2 > (Y_0,\psi_0). \\
\end{CD}
\end{equation}
\end{lemma}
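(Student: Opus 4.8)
The plan is a direct verification on a generic point, using the explicit description of elements of $Y$ obtained in the proof of Lemma \ref{Lem3main}. Recall from there (after the simplification \eqref{formula2_54}) that a typical element of $Y$ has the form $\left((y_0,\tilde{y}_2),x,(y_0,x_1)\right)$, where the two occurrences of $y_0$ are forced to coincide by the defining constraint $\pi^Y_2(y_2)=\pi^Y_1(y_1)$ in \eqref{DoubleFibre}: indeed $\pi^Y_2 = P_1 \colon Y_2 \to Y_0$ and $\pi^Y_1 = P_1 \colon Y_1 \to Y_0$ simply extract the $Y_0$-coordinate of $y_2=(y_0,\tilde y_2)\in Y_2$ and of $y_1=(y_0,x_1)\in Y_1$ respectively.

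The first step is to trace the composition $\pi^Y_1\circ\eta^Y_1$ around the top and right edges of \eqref{commutative_Y}. Since $\eta^Y_1=P_3$, it sends $\left((y_0,\tilde{y}_2),x,(y_0,x_1)\right)$ to $(y_0,x_1)\in Y_1$, and then $\pi^Y_1=P_1$ sends this to $y_0\in Y_0$. The second step is to trace $\pi^Y_2\circ\eta^Y_2$ around the left and bottom edges: $\eta^Y_2=P_1$ sends $\left((y_0,\tilde{y}_2),x,(y_0,x_1)\right)$ to $(y_0,\tilde y_2)\in Y_2$, and then $\pi^Y_2=P_1$ sends this to $y_0\in Y_0$. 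Both routes give $y_0$, so \eqref{commutative_Y} commutes.

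I expect no real obstacle here: the entire content is bookkeeping of which coordinate projection plays each role in the iterated fibre products of \eqref{DoubleFibre}, together with the observation that the extra constraint imposed there is precisely the equalizer condition $\pi^Y_2\circ\eta^Y_2=\pi^Y_1\circ\eta^Y_1$. From that point of view the square is just the outer square of a fibre product over $Y_0$ and commutes by construction, so one could alternatively give a point-free argument at this structural level; but the pointwise check above is the shortest to write down and dovetails with the notation already set up in Lemma \ref{Lem3main}.
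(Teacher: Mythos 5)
Your proposal is correct and matches the paper's own argument: both trace a generic point $\left((y_0,\tilde{y}_2),x,(y_0,x_1)\right)$ of $Y$ (in the form established in the proof of Lemma \ref{Lem3main}) through the two compositions, observing that $\pi^Y_1\circ\eta^Y_1$ and $\pi^Y_2\circ\eta^Y_2$ both return the common $Y_0$-coordinate $y_0$, which is forced to agree by the constraint $\pi^Y_2(y_2)=\pi^Y_1(y_1)$ in \eqref{DoubleFibre}. The extra structural remark about the equalizer condition is a fine aside but does not change the substance.
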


\begin{proof}
In the proof of Lemma \ref{Lem3main} we have shown that
\begin{align*}
\pi_1^Y \circ \eta_1^Y \left((y_0,\tilde{y}_2),x,(y_0,x_1)   \right) &= \pi_1^Y(y_0,x_1) = y_0 \quad \text{ and } \\ 
\pi_2^Y \circ \eta_2^Y \left((y_0,\tilde{y}_2),x,(y_0,x_1)   \right) &= \pi_2^Y(y_0,\tilde{y}_2) = y_0,
\end{align*}
proving the diagram commutes.
\end{proof}

Exchanging the roles of the s-bijective and u-bijective maps as needed in Lemmas \ref{Lem1main}, \ref{Lem2main}, \ref{Lem3main}, and \ref{Lem4main}, we adapt the proofs to obtain the following construction of a commutative diagram of Smale spaces. Let $(Z_0, \zeta_0)$ have totally disconnected stable sets such that $\rho^{ZX}_0: Z_0 \to X_0$ is u-bijective. Using $(Z_0, \zeta_0)$, we define Smale spaces
\begin{align*}
(Z_2, \zeta_2)&:= \fib{(Z_0, \zeta_0)}{\rho^{ZX}_0}{\pi_2}{(X_2, \varphi_2)} \quad \text{ and }\\
(Z_1, \zeta_1)&:= \fib{(Z_0, \zeta_0)}{\rho^{ZX}_0}{\pi_1 \circ \tilde{\rho}^{ZX}}{(\tilde{Z}_1, \tilde{\zeta}_1)}.
\end{align*}
These Smale spaces lead to the fibre product
\begin{equation}\nonumber
(Z, \zeta):=\{(z_2, x, z_1) \in \fib{(Z_2,\zeta_2)}{\rho^{ZX}_2}{\eta_2}{(X, \varphi)} \fib{}{\eta_1}{\rho^{ZX}_1}{(Z_1, \zeta_1)} \mid \pi^{Z}_2(z_2) = \pi^{Z}_1(z_1)\},
\end{equation}
and we obtain a commutative diagram
\begin{equation}\label{commutative_Z}
\begin{CD}
(Z,\zeta) @>>\eta^Z_1> (Z_1,\zeta_1) \\
@VV \eta^Z_2 V @VV \pi^Z_1 V \\
(Z_2,\zeta_2) @>> \pi^Z_2 > (Z_0,\zeta_0) \\
\end{CD}
\end{equation}
where $\pi_1^Z$ and $\eta_2^Z$ are u-bijective, and $\pi_2^Z$ and $\eta_1^Z$ are s-bijective.

A direct consequence of our constructions are the following s/u-bijective pairs:
\begin{align}
\rho &= (Y, \psi, \rho^{YX}, Z, \zeta, \rho^{ZX}) \quad \quad \,\, \text{ for $(X,\varphi)$}, \\
\rho_0 &= (Y_0, \psi_0, \rho^{YX}_0, Z_0, \zeta_0, \rho^{ZX}_0) \quad \text{ for $(X_0,\varphi_0)$}, \\
\rho_1 &= (Y_1, \psi_1, \rho^{YX}_1, Z_1, \zeta_1, \rho^{ZX}_1) \quad \text{ for $(X_1,\varphi_1)$}, \text{ and} \\
\rho_2 &= (Y_2, \psi_2, \rho^{YX}_2, Z_2, \zeta_2, \rho^{ZX}_2) \quad \text{ for $(X_2,\varphi_2)$}.
\end{align}
Combining these s/u-bijective pairs with the commutative diagrams \eqref{commutative_Y} and \eqref{commutative_Z} gives rise to the commutative cube of Smale spaces described below in \eqref{3d_diagram}.

\begin{lemma}
Suppose we have the following commutative diagram of Smale spaces
\begin{equation}\label{3d_diagram}
\begin{tikzpicture}[
back line/.style={densely dotted},
cross line/.style={preaction={draw=white, -,
line width=6pt}}]
\matrix (m) [matrix of math nodes,
row sep=2em, column sep=2em,
text height=1.5ex,
text depth=0.25ex]{
& (Y,\psi) & & (X,\varphi) & & (Z,\zeta) \\
(Y_2,\psi_2) & & (X_2,\varphi_2) & & (Z_2,\zeta_2) \\
& (Y_1,\psi_1) & & (X_1,\varphi_1) & & (Z_1,\zeta_1) \\
(Y_0,\psi_0) & & (X_0,\varphi_0) & & (Z_0,\zeta_0) \\
};
\path[->]
(m-1-2) edge node[above,pos=0.35] {$\rho^{YX}$} (m-1-4)
(m-1-2) edge [back line] node[left,pos=0.65] {$\eta^Y_1$} (m-3-2)
(m-1-4) edge [back line] node[left,pos=0.65] {$\eta_1$} (m-3-4)
(m-1-2) edge node[above,pos=0.65] {$\eta^Y_2$} (m-2-1)
(m-1-4) edge node[above,pos=0.65] {$\eta_2$} (m-2-3)
(m-2-1) edge node[above,pos=0.65] {$\rho^{YX}_2$} (m-2-3)
(m-2-1) edge node[left,pos=0.65] {$\pi^Y_2$} (m-4-1)
(m-3-2) edge [back line] node[above,pos=0.35] {$\rho^{YX}_1$} (m-3-4)
(m-3-2) edge [back line] node[above,pos=0.65] {$\pi^Y_1$} (m-4-1)
(m-4-1) edge node[above,pos=0.35] {$\rho^{YX}_0$} (m-4-3)
(m-3-4) edge [back line] node[above,pos=0.65] {$\pi_1$} (m-4-3)
(m-2-3) edge [cross line] node[left,pos=0.65] {$\pi_2$} (m-4-3)
(m-1-6) edge [cross line] node[above,pos=0.35] {$\rho^{ZX}$} (m-1-4) 
(m-1-6) edge [cross line] node[above,pos=0.65] {$\eta^Z_2$} (m-2-5)
(m-1-6) edge [cross line] node[left,pos=0.65] {$\eta^Z_1$} (m-3-6)
(m-3-6) edge [cross line] node[above,pos=0.65] {$\pi^Z_1$} (m-4-5)
(m-4-5) edge [cross line] node[above,pos=0.35] {$\rho^{ZX}_0$} (m-4-3)
(m-2-5) edge [cross line] node[above,pos=0.35] {$\rho^{ZX}_2$} (m-2-3)
(m-3-6) edge [back line] node[above,pos=0.35] {$\rho^{ZX}_1$} (m-3-4) 
(m-2-5) edge [cross line] node[left,pos=0.65] {$\pi^Z_2$} (m-4-5); 
\end{tikzpicture}
\end{equation}
where the spaces and maps are described in the previous lemmas. Then we have the following maps on homology:
$$
\eta_1^{s} \circ \eta_2^{s*}: H_N^s(X_2,\varphi_2) \to H_N^s(X_1,\varphi_1) 
$$
and
$$
\pi_1^{s*} \circ \pi_2^{s}: H_N^s(X_2,\varphi_2) \to H_N^s(X_1,\varphi_1). 
$$
\end{lemma}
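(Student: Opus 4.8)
The plan is to produce each of the four homology maps $\pi_2^s$, $\pi_1^{s*}$, $\eta_2^{s*}$, $\eta_1^s$ by a single application of Theorem~\ref{naturality} (equivalently Theorem~\ref{mod5.4.1}) to an appropriate face of the cube \eqref{3d_diagram}, and then to compose. Concretely, $\pi_2^s\colon H_N^s(X_2,\varphi_2)\to H_N^s(X_0,\varphi_0)$ comes from the triple $(\pi_2,\pi^Y_2,\pi^Z_2)$, which consists of s-bijective factor maps by the hypotheses of Theorem~\ref{main_thm}, Lemma~\ref{Lem2main}, and the s/u-bijectivity assertions accompanying \eqref{commutative_Z}; the map $\pi_1^{s*}\colon H_N^s(X_0,\varphi_0)\to H_N^s(X_1,\varphi_1)$ comes from the u-bijective triple $(\pi_1,\pi^Y_1,\pi^Z_1)$ (Lemma~\ref{Lem1main} and \eqref{commutative_Z}); $\eta_1^s\colon H_N^s(X,\varphi)\to H_N^s(X_1,\varphi_1)$ from the s-bijective triple $(\eta_1,\eta^Y_1,\eta^Z_1)$ (Lemma~\ref{Lem3main}(4) and \eqref{commutative_Z}); and $\eta_2^{s*}\colon H_N^s(X_2,\varphi_2)\to H_N^s(X,\varphi)$ from the u-bijective triple $(\eta_2,\eta^Y_2,\eta^Z_2)$ (Lemma~\ref{Lem3main}(2) and \eqref{commutative_Z}). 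The commuting square needed as input to Theorem~\ref{naturality} is in each case a face of \eqref{3d_diagram}; these commute by Lemma~\ref{Lem4main}, its $Z$-analogue, and the fibre-product definitions.

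The one hypothesis of Theorem~\ref{naturality} that requires checking is the conjugacy condition on the relevant product map. For $\pi_2^s$ and $\pi_1^{s*}$ this is essentially automatic: in the $s$-bijective case the condition is that $\rho^{ZX}_2\times\pi^Z_2\colon(Z_2,\zeta_2)\to\fib{(X_2,\varphi_2)}{\pi_2}{\rho^{ZX}_0}{(Z_0,\zeta_0)}$ be a conjugacy, and since $(Z_2,\zeta_2)$ was \emph{defined} to be $\fib{(Z_0,\zeta_0)}{\rho^{ZX}_0}{\pi_2}{(X_2,\varphi_2)}$ with $\rho^{ZX}_2=P_2$, $\pi^Z_2=P_1$, this is just the coordinate interchange $(z_0,x_2)\mapsto(x_2,z_0)$; the same applies to $\rho^{YX}_1\times\pi^Y_1$ for $\pi_1^{s*}$. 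For $\eta_1^s$ the product map is $\rho^{ZX}\times\eta^Z_1\colon(Z,\zeta)\to\fib{(X,\varphi)}{\eta_1}{\rho^{ZX}_1}{(Z_1,\zeta_1)}$, $(z_2,x,z_1)\mapsto(x,z_1)$, and for $\eta_2^{s*}$ it is $\rho^{YX}\times\eta^Y_2\colon(Y,\psi)\to\fib{(X,\varphi)}{\eta_2}{\rho^{YX}_2}{(Y_2,\psi_2)}$, $(y_2,x,y_1)\mapsto(x,y_2)$. I expect these two to be the only nontrivial (if still routine) point: one checks bijectivity by exhibiting the inverse explicitly. For $\rho^{ZX}\times\eta^Z_1$, the defining fibre-product relations of $(Z,\zeta)$ force the missing coordinate of a preimage of $(x,z_1)$ to be $z_2=(\pi^Z_1(z_1),\eta_2(x))$, and one verifies $z_2\in(Z_2,\zeta_2)$ and $\pi^Z_2(z_2)=\pi^Z_1(z_1)$ using $\pi_1\circ\eta_1=\pi_2\circ\eta_2$ from \eqref{main_diagram}; the argument for $\rho^{YX}\times\eta^Y_2$ is identical with $y_1=(\pi^Y_2(y_2),\eta_1(x))$. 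Since all four maps visibly intertwine the dynamics, producing two-sided inverses completes the verification.

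Finally, to phrase the conclusion in terms of the invariants $H_N^s(X_i,\varphi_i)$ rather than the groups $H_N^s(\rho_i)$ of the specific pairs constructed above, I would cite \cite[Theorem 5.5.1]{put} for the canonical identification and Theorem~\ref{main_naturality} for the independence of the induced maps from the chosen s/u-bijective pairs. Composing then gives the two maps $\eta_1^s\circ\eta_2^{s*}$ and $\pi_1^{s*}\circ\pi_2^s$ from $H_N^s(X_2,\varphi_2)$ to $H_N^s(X_1,\varphi_1)$ as asserted; their equality is not part of this lemma and is established in the subsequent lemmas leading to Theorem~\ref{main_thm}.
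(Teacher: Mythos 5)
Your proposal is correct and takes essentially the same route as the paper: the paper's proof is a one-line assertion that the constructions of the preceding lemmas make each relevant face of the cube \eqref{3d_diagram} satisfy the hypotheses of Theorem \ref{mod5.4.1} (equivalently Theorem \ref{naturality}), so that the four triples $(\pi_2,\pi^Y_2,\pi^Z_2)$, $(\pi_1,\pi^Y_1,\pi^Z_1)$, $(\eta_1,\eta^Y_1,\eta^Z_1)$, $(\eta_2,\eta^Y_2,\eta^Z_2)$ induce maps on homology which are then composed, exactly as you do. Your explicit face-by-face verification of the conjugacy conditions (coordinate flips for the $\pi_2$ and $\pi_1$ faces, forced missing coordinates for the $\eta_1$ and $\eta_2$ faces via the defining relations of $Y$ and $Z$), using the one-sided hypothesis of Theorem \ref{naturality}, is a slightly more careful rendering of what the paper leaves implicit.
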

\begin{proof}
Our constructions guarantee that all the relevant commutative diagrams satisfy the conditions of Theorem \ref{mod5.4.1}, which then gives the result.
\end{proof}

It remains to show that the two maps on homology in the previous lemma are equal.

\begin{lemma}
 For $L,M \geq 0$, define the SFTs $(\Sigma_{L,M}(\rho), \sigma)$, $(\Sigma_{L,M}(\rho_0), \sigma)$, $(\Sigma_{L,M}(\rho_1), \sigma)$, and $(\Sigma_{L,M}(\rho_2), \sigma)$ as in \cite[Defn 2.6.4]{put}. Then the maps defined via
\begin{eqnarray*}
(\eta^{\Sigma}_1)_{L,M} &=& (\eta_1^Y \circ P_1)_L \times (\eta_1^Z \circ P_2)_M \\
(\eta^{\Sigma}_2)_{L,M} &=& (\eta_2^Y \circ P_1)_L \times (\eta_2^Z \circ P_2)_M \\
(\pi^{\Sigma}_1)_{L,M} &=&  (\pi_1^Y \circ P_1)_L \times (\pi_1^Y \circ P_2)_M \\
(\pi^{\Sigma}_2)_{L,M} &=&  (\pi_2^Y \circ P_1)_L \times (\pi_2^Y \circ P_2)_M.\\
\end{eqnarray*}
are well-defined. Moreover, $(\pi^{\Sigma}_1)_{L,M}$ and $(\eta^{\Sigma}_2)_{L,M}$ are u-bijective, $(\pi^{\Sigma}_2)_{L,M}$ and $(\eta^{\Sigma}_1)_{L,M}$ are s-bijective, and
\[
\begin{CD}
(\Sigma_{L,M}(\rho), \sigma) @>>(\eta^{\Sigma}_1)_{L,M}> (\Sigma_{L,M}(\rho_1), \sigma) \\
@VV (\eta^{\Sigma}_2)_{L,M} V @VV (\pi^{\Sigma}_1)_{L,M} V \\
(\Sigma_{L,M}(\rho_2), \sigma) @>> (\pi^{\Sigma}_2)_{L,M} > (\Sigma_{L,M}(\rho_0), \sigma) \\
\end{CD}
\]
is a commutative diagram and 
\[
(\eta^{\Sigma}_2)_{L,M} \times (\eta^{\Sigma}_1)_{L,M}: (\Sigma_{L,M}(\rho), \sigma) \to \fib{(\Sigma_{L,M}(\rho_2), \sigma)}{(\pi^{\Sigma}_2)_{L,M}}{(\pi^{\Sigma}_1)_{L,M}}{(\Sigma_{L,M}(\rho_1), \sigma)}
\]
is a conjugacy.
\end{lemma}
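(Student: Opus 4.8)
The plan is to verify, in turn, that each of the four maps is a well-defined factor map of the stated resolving type, that the displayed square of SFTs commutes, and that the displayed product map is bijective; since all spaces are shifts of finite type (hence compact Hausdorff) and every map in sight intertwines the shifts, bijectivity of the last map upgrades to a conjugacy for free, so no metric estimates are needed. First fix notation: a generic element of $\Sigma_{L,M}(\rho)$ is $\big((y_l)_{l=0}^L,(z_m)_{m=0}^M\big)$ with $y_l\in Y$, $z_m\in Z$ and $\rho^{YX}(y_l)=\rho^{ZX}(z_m)$ for all $l,m$, and the four maps act coordinate-wise; e.g.\ $(\eta^{\Sigma}_1)_{L,M}$ sends this tuple to $\big((\eta^Y_1(y_l))_l,(\eta^Z_1(z_m))_m\big)$. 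Well-definedness — that the image indeed satisfies the constraint defining $\Sigma_{L,M}(\rho_1)$ — follows by running $\rho^{YX}(y_l)=\rho^{ZX}(z_m)$ through the commuting top faces $\rho^{YX}_1\circ\eta^Y_1=\eta_1\circ\rho^{YX}$ and $\rho^{ZX}_1\circ\eta^Z_1=\eta_1\circ\rho^{ZX}$ of the cube \eqref{3d_diagram}, and the other three maps are handled identically; commutativity of the displayed square is then immediate from the commutativity of the front faces \eqref{commutative_Y} and \eqref{commutative_Z}. For the resolving-type claims one argues directly, say for $(\eta^{\Sigma}_1)_{L,M}$: stable equivalence on a fibre product of Smale spaces is coordinate-wise, so since $\eta^Y_1$ and $\eta^Z_1$ are s-bijective the map is injective on each stable class and every tuple stably equivalent to the image is reached coordinate-wise; the only point to check is that the lifted $Y$- and $Z$-coordinates still satisfy the coupling constraint $\rho^{YX}=\rho^{ZX}$, which holds because the two candidate values lie in a single stable class of $X$ and have the same image under the s-bijective map $\eta_1$. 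The same argument, with the roles of stable/unstable and of the component maps adjusted, shows $(\pi^{\Sigma}_2)_{L,M}$ is s-bijective and $(\eta^{\Sigma}_2)_{L,M},(\pi^{\Sigma}_1)_{L,M}$ are u-bijective; alternatively one invokes Theorem \ref{mod5.4.1} applied to the triples $(\eta_1,\eta^Y_1,\eta^Z_1)$, $(\pi_2,\pi^Y_2,\pi^Z_2)$, $(\eta_2,\eta^Y_2,\eta^Z_2)$, $(\pi_1,\pi^Y_1,\pi^Z_1)$, which by the preceding lemma satisfy its hypotheses.

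The substance of the statement is the conjugacy assertion, and the plan is to reduce it to the spaces $(Y,\psi)$ and $(Z,\zeta)$. Built into the definition of $(Y,\psi)$ in Lemma \ref{Lem3main} (and of $(Z,\zeta)$) is the fact that
\[
\eta^Y_2\times\eta^Y_1\colon(Y,\psi)\to\fib{(Y_2,\psi_2)}{\pi^Y_2}{\pi^Y_1}{(Y_1,\psi_1)}
\qquad\text{and}\qquad
\eta^Z_2\times\eta^Z_1\colon(Z,\zeta)\to\fib{(Z_2,\zeta_2)}{\pi^Z_2}{\pi^Z_1}{(Z_1,\zeta_1)}
\]
are conjugacies: for $(y_2,y_1)$ in the first fibre product the cube gives $\pi_2(\rho^{YX}_2(y_2))=\rho^{YX}_0(\pi^Y_2(y_2))=\rho^{YX}_0(\pi^Y_1(y_1))=\pi_1(\rho^{YX}_1(y_1))$, so $(\rho^{YX}_2(y_2),\rho^{YX}_1(y_1))\in\fib{(X_2,\varphi_2)}{\pi_2}{\pi_1}{(X_1,\varphi_1)}$, and since $\eta_2\times\eta_1$ is a conjugacy there is a unique $x\in X$ with $\eta_2(x)=\rho^{YX}_2(y_2)$ and $\eta_1(x)=\rho^{YX}_1(y_1)$; this $x$ is forced to be the middle coordinate of the (hence unique) preimage $(y_2,x,y_1)\in Y$, and the $Z$-statement is identical.

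To finish I would lift coordinate-wise. Take a point $\big(((\bar y_l),(\bar z_m)),((a_l),(b_m))\big)$ of $\fib{\Sigma_{L,M}(\rho_2)}{(\pi^{\Sigma}_2)_{L,M}}{(\pi^{\Sigma}_1)_{L,M}}{\Sigma_{L,M}(\rho_1)}$; the fibre-product condition says $\pi^Y_2(\bar y_l)=\pi^Y_1(a_l)$ and $\pi^Z_2(\bar z_m)=\pi^Z_1(b_m)$, so the reduction above supplies, for each $l$, a unique $y_l\in Y$ with $\eta^Y_2(y_l)=\bar y_l$, $\eta^Y_1(y_l)=a_l$, and for each $m$ a unique $z_m\in Z$ with $\eta^Z_2(z_m)=\bar z_m$, $\eta^Z_1(z_m)=b_m$. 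The one remaining point is that $\big((y_l),(z_m)\big)\in\Sigma_{L,M}(\rho)$, i.e.\ $\rho^{YX}(y_l)=\rho^{ZX}(z_m)$; pushing both sides through $\eta_2$ and through $\eta_1$ and using the cube together with $\rho^{YX}_2(\bar y_l)=\rho^{ZX}_2(\bar z_m)$ and $\rho^{YX}_1(a_l)=\rho^{ZX}_1(b_m)$ shows that $\eta_2\times\eta_1$ identifies $\rho^{YX}(y_l)$ with $\rho^{ZX}(z_m)$, whence they are equal because $\eta_2\times\eta_1$ is injective. This gives surjectivity, uniqueness of the lifts gives injectivity, and the resulting bijection — continuous between compact Hausdorff spaces and commuting with the shifts — is the desired conjugacy. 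Functoriality-type bookkeeping, if needed, then goes exactly as in \cite[Theorem 5.4.1]{put}.

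I expect the only real difficulty to be organizational rather than conceptual: one must keep straight which maps are s-bijective and which are u-bijective as they are propagated through the tower of fibre products, and — the step that genuinely uses the full hypothesis of Theorem \ref{main_thm} — one must notice that at the $\Sigma_{L,M}$ level the defining constraint couples the $Y$- and $Z$-coordinates, so the coordinate-wise lifting above cannot be closed off using only that $\eta^Y_2\times\eta^Y_1$ and $\eta^Z_2\times\eta^Z_1$ are conjugacies; it needs the injectivity of $\eta_2\times\eta_1$ on $X$ itself. Once the commutative cube \eqref{3d_diagram} has been assembled (which is the real work, carried out in the earlier lemmas), everything here is a routine diagram chase.
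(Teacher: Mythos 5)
Your argument is, at bottom, the same diagram chase as the paper's, and it is correct: well-definedness and commutativity of the square come from the cube \eqref{3d_diagram} (equivalently \eqref{commutative_Y} and \eqref{commutative_Z}); the resolving claims come from s/u-bijectivity of the component maps plus an injectivity argument that closes the coupling constraint; and the conjugacy uses ontoness of $\eta_2\times\eta_1$ to manufacture the middle $X$-coordinate and its injectivity both to verify $\rho^{YX}(y_l)=\rho^{ZX}(z_m)$ and to get uniqueness. Where you genuinely differ is in organization: you first isolate the statements that $\eta^Y_2\times\eta^Y_1\colon (Y,\psi)\to \fib{(Y_2,\psi_2)}{\pi^Y_2}{\pi^Y_1}{(Y_1,\psi_1)}$ and $\eta^Z_2\times\eta^Z_1$ are conjugacies (a fact the paper never states explicitly) and then lift coordinate-wise, whereas the paper works throughout with explicit coordinates of the iterated fibre products (points of the form $\left(((y_0,\tilde{y}_2),x,(y_0,x_1)),((z_0,x_2),x,(z_0,\tilde{z}_1))\right)$), writes the resolving-type verification out only for $L=M=0$, and proves surjectivity/injectivity of $(\eta^{\Sigma}_2)_{L,M}\times(\eta^{\Sigma}_1)_{L,M}$ directly, the injectivity by a case analysis. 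Your modularization is cleaner and makes visible exactly where the hypothesis on $\eta_2\times\eta_1$ enters; the paper's computation has the advantage of recording the explicit relations \eqref{eq1:lem57}--\eqref{eq2:lem57} that it reuses.

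Two smaller points. First, you announce that each of the four maps is a ``well-defined factor map of the stated resolving type'' but never argue surjectivity; the paper does (for $(\eta^{\Sigma}_1)_{0,0}$ it lifts a point of $\Sigma_{0,0}(\rho_1)$ using ontoness of $\eta_1$ and $\tilde{\rho}^{YX}$), and this is not cosmetic, since the lemma feeds into Theorem \ref{mod3.5.11}, which is stated for s/u-bijective \emph{factor} maps. Note also that ontoness of the $\eta^{\Sigma}$'s does not follow formally from your conjugacy statement alone: it reduces to ontoness of $(\pi^{\Sigma}_2)_{L,M}$ (resp.\ $(\pi^{\Sigma}_1)_{L,M}$), which requires its own short lifting argument (choose $x_2\in\pi_2^{-1}(x_0)$ and lift it through $\tilde{\rho}^{YX}$, say). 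Second, the parenthetical alternative of deducing the resolving properties from Theorem \ref{mod5.4.1} cites the wrong result: that theorem concerns induced maps on homology, not s/u-bijectivity of the $\Sigma_{L,M}$-level maps; your direct argument is the right one and matches what the paper does.
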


\begin{proof}

We first prove that $(\eta^{\Sigma}_1)_{L,M}$ is well-defined and s-bijective. For simplicity of notation we prove the $M=L=0$ case, and note that the general case is similar.

To see the map is well-defined, note that a typical element of $\Sigma_{0,0}(\rho)$ has the form
\[
\left(((y_0,\tilde{y}_2),x,(y_0,x_1)),((z_0,x_2),x,(z_0,\tilde{z}_1)) \right)
\]
where
\begin{equation}\label{eq1:lem57}
\pi_1(x_1) = \rho^{YX}_0(y_0) = \pi_2(\tilde{\rho}^{YX}(\tilde{y}_2)), \ \ \tilde{\rho}^{YX}(\tilde{y}_2) = \eta_2(x),  \ \ \eta_1(x) = x_1.
\end{equation}
and
\begin{equation}\label{eq2:lem57}
\pi_2(x_2) = \rho^{ZX}_0(z_0) = \pi_1(\tilde{\rho}^{ZX}(\tilde{z}_1)), \ \ \tilde{\rho}^{ZX}(\tilde{z}_1) = \eta_1(x),  \ \ \eta_2(x) = x_2.
\end{equation}
The definition of $(\eta_1^\Sigma)_{0,0}$ implies that 
\[
(\eta^{\Sigma}_1)_{0,0}\left(((y_0,\tilde{y}_2),x,(y_0,x_1)),((z_0,x_2),x,(z_0,\tilde{z}_1)) \right) = ((y_0,x_1),(z_0,\tilde{z}_1)),
\]
We must show that $((y_0,x_1),(z_0,\tilde{z}_1)) \in \Sigma_{0,0}(\rho_1)$. That is, we must show that 
\[
\pi_1(x_1) = \rho^{YX}_0(y_0), \ \ \rho^{ZX}_0(z_0) = \pi_1(\tilde{\rho}^{ZX}(\tilde{z}_1)), \ \ x_1 = \tilde{\rho}^{ZX}(\tilde{z}_1).
\]
However, these equalities are immediate from \eqref{eq1:lem57} and \eqref{eq2:lem57}.

Next, we must show that $(\eta^{\Sigma}_1)_{(0,0)}$ is a factor map (i.e., that it commutes with dynamics and is surjective); we only give the details of the proof that it is surjective. Let $((y_0,x_1),(z_0,\tilde{z}_1))$ be an element in $\Sigma_{0,0}(\rho_1)$. Since $\eta_1$ and $\tilde{\rho}^{YX}$ are onto, there exists $x\in X$ and $\tilde{y}_2 \in \tilde{Y}_2$ such that $\eta_1(x)=x_1$ and $\tilde{\rho}^{YX}(\tilde{y}_2)=\eta_2(x)$. 

We must show that 
\[
\left(((y_0,\tilde{y}_2),x,(y_0,x_1)),((z_0,\eta_2(x)),x,(z_0,\tilde{z}_1)) \right) \in \Sigma_{0,0}(\rho).
\]
All of the required equalities (i.e.,  \eqref{eq1:lem57} and \eqref{eq2:lem57}) follow trivially or from a short computation. As a prototypical example, we show that $\pi_1(x_1) = \rho^{YX}_0(y_0) = \pi_2(\tilde{\rho}^{YX}(\tilde{y}_2))$; proofs of the other equalities are left to the reader. That $\pi_1(x_1)=\rho^{YX}_0(y_0)$ follows trivially from $((y_0,x_1),(z_0,\tilde{z}_1)) \in \Sigma_{0,0}(\rho_1)$. On the other hand, that $\pi_1(x_1)=  \pi_2(\tilde{\rho}^{YX}(\tilde{y}_2))$ follows from
\[ \pi_1(x_1)=(\pi_1\circ \eta_1)(x)=(\pi_2\circ \eta_2)(x)=\pi_2(\tilde{\rho}^{YX}(\tilde{y}_2)).\]

Next, we must show that $(\eta^{\Sigma}_1)_{(0,0)}$ is s-bijective (i.e., that, for each element in $\Sigma_{0,0}(\rho)$, $\left(((y_0,\tilde{y}_2),x,(y_0,x_1)),((z_0,x_2),x,(z_0,\tilde{z}_1))) \right)$ , the restriction of $(\eta^{\Sigma}_1)_{(0,0)}$ to 
\[ X^s\left(((y_0,\tilde{y}_2),x,(y_0,x_1)),((z_0,x_2),x,(z_0,\tilde{z}_1)) \right) \]
is a bijective map from 
\[ X^s\left(((y_0,\tilde{y}_2),x,(y_0,x_1)),((z_0,x_2),x,(z_0,\tilde{z}_1)) \right) \]
to 
\[ X^s\left((\eta^{\Sigma}_1)_{(0,0)}((y_0,\tilde{y}_2),x,(y_0,x_1)),((z_0,x_2),x,(z_0,\tilde{z}_1)) \right). \]

As the reader can verify, showing the map is injective is equivalent to showing that if 
\[\left(((y_0,\tilde{y}_2),x,(y_0,x_1)),((z_0,x_2),x,(z_0,\tilde{z}_1)) \right)  \stackrel{s}{\sim} \left(((y_0,\tilde{y}^{\prime}_2),x^{\prime},(y_0,x_1)),((z_0,x^{\prime}_2),x^{\prime},(z_0,\tilde{z}_1)) \right), 
\]
then they are equal. To show this latter statement, note that since $\eta_1$ is s-bijective and $\eta_1(x)=x_1=\eta_1(x^{\prime})$, we have that $x=x^{\prime}$. Then, $x_2^{\prime}=\eta_2(x^{\prime})=\eta_2(x)=x_2$. Finally, we must show that $\tilde{y}_2=\tilde{y}^{\prime}_2$. Using the fact that $\tilde{\rho}^{YX}$ is s-bijective, we need only show that $\tilde{\rho}^{YX}(\tilde{y}_2)=\tilde{\rho}^{YX}(\tilde{y}^{\prime}_2)$; however, this follows since $x=x^{\prime}$ and the equalities in  \eqref{eq1:lem57}. This completes the proof that the restriction is injective.

Finally, we show that the map (restricted to each stable equivalence class) is surjective. In other words, given 
\[ \left(((y_0,\tilde{y}_2),x,(y_0,x_1)),((z_0,x_2),x,(z_0,\tilde{z}_1)) \right) \in \Sigma_{0,0}(\rho) \]
and
\[
((y_0,x_1),(z_0,\tilde{z}_1)) \stackrel{s}{\sim} ((y'_0,x'_1),(z'_0,\tilde{z}'_1)) \in \Sigma_{0,0}(\rho_1)
\]
we must produce a preimage of  $((y'_0,x'_1),(z'_0,\tilde{z}'_1))$ which is in 
\[ X^s\left(((y_0,\tilde{y}_2),x,(y_0,x_1)),((z_0,x_2),x,(z_0,\tilde{z}_1)) \right). \]
Since $\eta_1$ is s-bijective and $\eta_1(x)=x_1 \stackrel{s}{\sim}x'_1$, there exists $x'\in X$ such that $x \stackrel{s}{\sim}x'$ and $\eta_1(x')=x'_1$. Similarly, since $\tilde{\rho}^{YX}$ is s-bijective and $\tilde{\rho}^{YX}(\tilde{y}_2)= \eta_2(x) \stackrel{s}{\sim} \eta_2(x')$, there exists $\tilde{y}'_2$ such that $\tilde{y}'_2 \stackrel{s}{\sim} \tilde{y}_2$ and $\tilde{\rho}^{YX}(\tilde{y}'_2)=\eta_2(x')$.
Finally, it is clear that 
\[\left(((y'_0,\tilde{y}'_2),x',(y'_0,x'_1)),((z'_0,\eta_2(x')),x',(z'_0,\tilde{z}'_1)) \right)\]
is a preimage provided that it is actually in $ \Sigma_{0,0}(\rho)$. However, this fact follows in the same way as in the proof that the map $(\eta^{\Sigma}_1)_{(0,0)}$ was surjective. 

The proofs that $(\pi^{\Sigma}_1)_{L,M}$ and $(\eta^{\Sigma}_2)_{L,M}$ are u-bijective, and that $(\pi^{\Sigma}_2)_{L,M}$ is s-bijective are analogous and are omitted. Furthermore, routine arguments show that the diagram commutes.

We now prove that $(\eta^{\Sigma}_2)_{L,M} \times (\eta^{\Sigma}_1)_{L,M}$ is surjective. A typical element of 
\[
\fib{(\Sigma_{L,M}(\rho_2), \sigma)}{(\pi^{\Sigma}_2)_{L,M}}{(\pi^{\Sigma}_1)_{L,M}} {(\Sigma_{L,M}(\rho_1), \sigma)}
\]
has the form
\begin{equation}\label{eq4:lem57}
\left((y_0^{(l)}, \tilde{y}_2^{(l)})_{l=0}^L, (z_0^{(m)},x_2^{(m)})_{m=0}^M , ((y^{(l)}_0)',x^{(l)}_1)_{l=0}^L, ((z^{(m)}_0)',\tilde{z}^{(m)}_1)_{m=0}^M \right),
\end{equation}
where $x_1^{(i)} = \tilde{\rho}^{Z}(\tilde{z}_1^{(j)})$ for all $i, j$, $x_2^{(i)} = \tilde{\rho}^{Y}(\tilde{y}_2^{(j)})$ for all $i, j$, $y_0^{(i)} = (y_0^{(i)})'$, $z_0 = (z_0^{(i)})'$ (as well as several other relations). So we can rewrite \eqref{eq4:lem57} as
\[
\left((y_0^{(l)}, \tilde{y}_2^{(l)})_{l=0}^L, (z_0^{(m)},x_2)_{m=0}^M , ((y^{(l)}_0),x_1)_{l=0}^L, ((z^{(m)}_0),\tilde{z}^{(m)}_1)_{m=0}^M \right).
\]
Since $\pi_2(\tilde{\rho}^Y(\tilde{y}_2^{(i)})) = \rho^{YX}_0(y_0^{(i)}) = \pi_1(x_1^{(i)})$, and 
\[
\eta_2 \times \eta_1: (X,\varphi) \to \fib{(X_2,\varphi_2)}{\pi_2}{\pi_1}{(X_1,\varphi_1)}
\]
is onto, there exists $x \in X$ such that for each $i$ we have $\eta_2(x) = \tilde{\rho}^Y(\tilde{y}_2^{(i)})$ and $\eta_1(x) = x_1$. Then we have
\[
\left( ((y^{(l)}_0,\tilde{y^{(l)}}_2),x,(y^{(l)}_0,x_1))_{l=0}^L , ((z^{(m)}_0,x_2),x,(z^{(m)}_0,\tilde{z^{(m)}}_1))_{m=0}^M \right) \in (\Sigma_{L,M}(\rho), \sigma),
\]
and this maps (under $(\eta^{\Sigma}_2)_{L,M} \times (\eta^{\Sigma}_1)_{L,M}$) to the generic point in \eqref{eq4:lem57}, as desired.

Finally, we show $(\eta^{\Sigma}_2)_{L,M} \times (\eta^{\Sigma}_1)_{L,M}$ is injective. Suppose
\[
\left( ((y^{(l)}_0,\tilde{y^{(l)}}_2),x,(y^{(l)}_0,x_1^{(l)}))_{l=0}^L , ((z^{(m)}_0,x_2^{(m)}),x,(z^{(m)}_0,\tilde{z^{(m)}}_1))_{m=0}^M \right) \in \Sigma_{L,M}(\rho)
\]
and
\[
\left( ((y^{(l)'}_0,\tilde{y^{(l)'}}_2),x',(y^{(l)'}_0,x_1^{(l)'}))_{l=0}^L , ((z^{(m)'}_0,x_2^{(m)'}),x',(z^{(m)'}_0,\tilde{z^{(m)'}}_1))_{m=0}^M \right) \in \Sigma_{L,M}(\rho)
\]
are two points that are not equal, but are equal after applying $\eta^{\Sigma}_2$.  We will show that they are not equal after applying $\eta^{\Sigma}_1$.
\begin{align*}
(\eta^{\Sigma}_1)_{L,M} \left( ((y^{(l)}_0,\tilde{y^{(l)}}_2),x,(y^{(l)}_0,x_1^{(l)}))_{l=0}^L , ((z^{(m)}_0,x_2^{(m)}),x \right.&\left.,(z^{(m)}_0,\tilde{z^{(m)}}_1))_{m=0}^M \right)\\
&=\left((y^{(l)}_0,x^{(l)}_1)_{l=0}^L, (z^{(m)}_0,\tilde{z^{(m)}}_1)_{m=o}^M \right) \\
(\eta^{\Sigma}_1)_{L,M}\left( ((y^{(l)'}_0,\tilde{y^{(l)'}}_2),x',(y^{(l)'}_0,x_1^{(l)'}))_{l=0}^L , ((z^{(m)'}_0,x_2^{(m)'}),x' \right.& \left. ,(z^{(m)'}_0,\tilde{z^{(m)'}}_1))_{m=0}^M \right) \\
&=\left((y^{(l)'}_0,x^{(l)'}_1)_{l=0}^L, (z^{(m)'}_0,\tilde{z^{(m)'}}_1)_{m=o}^M \right).
\end{align*}
Since these points are not equal to begin with, but are equal after applying $(\eta^{\Sigma}_2)_{L,M}$ we know that one of the following must hold.
\begin{enumerate}
\item $x^{(l)}_1 \neq x^{(l)'}_1$ for some $l$
\item $\tilde{z^{(m)}}_1 \neq \tilde{z^{(m)'}}_1$ for some $m$
\item $x \neq x'$
\end{enumerate}
If either of the first two hold, the result follows immediately.  If the third holds, then by hypothesis we have $\eta_2(x) = x_2 = x_2' = \eta_2(x')$, and it follows that $x^{(l)}_1 = \eta_1(x) \neq \eta_1(x') = x^{(l)'}_1$ and the result follows.
\end{proof}

We are now able to prove the main result of the section.

\begin{proof}[Proof of Theorem \ref{main_thm}]
By Theorem \ref{mod3.5.11}, for each $L,M$ we have
\[
(\eta_1^{\Sigma})^{s}_{L,M} \circ (\eta_2^{\Sigma})^{s*}_{L,M} = (\pi_1^{\Sigma})^{s*}_{L,M} \circ (\pi_2^{\Sigma})^{s}_{L,M}: D^s(\Sigma_{L,M}(\rho_2),\sigma) \to D^s(\Sigma_{L,M}(\rho_1),\sigma).
\]
Since 
\[
\eta_1^{s} \circ \eta_2^{s*}: H_N^s(X_2,\varphi_2) \to H_N^s(X_1,\varphi_1) 
\]
is defined by the composition $(\eta_1^{\Sigma})^{s}_{L,M} \circ (\eta_2^{\Sigma})^{s*}_{L,M}$ and
\[
\pi_1^{s*} \circ \pi_2^{s}: H_N^s(X_2,\varphi_2) \to H_N^s(X_1,\varphi_1). 
\]
is defined by the composition $(\pi_1^{\Sigma})^{s*}_{L,M} \circ (\pi_2^{\Sigma})^{s}_{L,M}$,
we can conclude that 
\[
\eta_1^{s} \circ \eta_2^{s*} = \pi_1^{s*} \circ \pi_2^{s}: H_N^s(X_2,\varphi_2) \to H_N^s(X_1,\varphi_1),
\]
the desired result.
\end{proof}

\section{A summary of our results for the unstable homology theory}\label{unstable_results}
In the preceding sections, only $H^s(X,\phi)$ was considered. The analogous results for $H^u(X,\phi)$ are stated in this section to facilitate easy referencing. The proofs are analogous to the $H^s(X,\phi)$ results and are omitted.

\begin{thm}
Suppose that 
\[
\begin{CD}
(\Sigma, \sigma) @>>\eta_1> (\Sigma_1, \sigma) \\
@VV \eta_2 V @VV \pi_1 V \\
(\Sigma_2, \sigma) @>> \pi_2 > (\Sigma_0, \sigma)\\
\end{CD}
\]
is a commutative diagram of non-wandering shifts of finite type in which $\eta_1$ and $\pi_2$ are s-bijective factor maps, and $\eta_2$ and $\pi_1$ are u-bijective factor maps. Moreover, suppose $\eta_2 \times \eta_1 : (\Sigma, \sigma) \to \fib{(\Sigma_2, \sigma)}{\pi_2}{\pi_1}{(\Sigma_1, \sigma)}$ is a conjugacy.
Then
\[
\eta_2^{u} \circ \eta_1^{u*}= \pi_2^{u*} \circ \pi_1^{u} : H_N^u(\Sigma_1,\sigma) \to H_N^u(\Sigma_2,\sigma).
\]
\end{thm}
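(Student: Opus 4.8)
The plan is to deduce this from Putnam's Theorem~\ref{mod3.5.11} (that is, \cite[Theorem~3.5.11]{put}) by reversing time. First recall that a shift of finite type $(\Sigma,\sigma)$ is itself a zero-dimensional Smale space and that, using the trivial s/u-bijective pair $(\Sigma,\sigma,\id,\Sigma,\sigma,\id)$ — for which $\Sigma_{L,M}$ is conjugate to $\Sigma$ for every $L,M$ and the face maps $\delta$ become identities — one has $H^u_N(\Sigma,\sigma)=D^u(\Sigma,\sigma)$ when $N=0$ and $H^u_N(\Sigma,\sigma)=0$ otherwise, with the induced homomorphisms $\eta^u$ and $\eta^{u*}$ coinciding with the Krieger maps of Theorem~\ref{Krieger_functorial}; see \cite[Section~3]{put}. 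Hence the cases $N\neq 0$ are vacuous, and it suffices to establish
\[
\eta_2^{u}\circ\eta_1^{u*}=\pi_2^{u*}\circ\pi_1^{u}: D^u(\Sigma_1,\sigma)\to D^u(\Sigma_2,\sigma)
\]
at the level of Krieger's dimension groups.

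Next I would replace $\sigma$ by $\sigma^{-1}$ throughout. This leaves the four spaces, the four maps, the non-wandering hypothesis, and the commutativity of the square untouched; but, since the stable sets of $(\Sigma,\sigma^{-1})$ are precisely the unstable sets of $(\Sigma,\sigma)$ and conversely, each of $\eta_1$ and $\pi_2$ (s-bijective for $\sigma$) becomes u-bijective for $\sigma^{-1}$, while $\eta_2$ and $\pi_1$ become s-bijective for $\sigma^{-1}$. The fibre-product space is unchanged, so $\eta_2\times\eta_1$ is still a conjugacy. Reflecting the square across its main diagonal — i.e. interchanging the names $\Sigma_1\leftrightarrow\Sigma_2$, $\eta_1\leftrightarrow\eta_2$, $\pi_1\leftrightarrow\pi_2$ — gives the commutative diagram
\[
\begin{CD}
(\Sigma,\sigma^{-1}) @>>\eta_2> (\Sigma_2,\sigma^{-1}) \\
@VV \eta_1 V @VV \pi_2 V \\
(\Sigma_1,\sigma^{-1}) @>> \pi_1 > (\Sigma_0,\sigma^{-1}) \\
\end{CD}
\]
in which (with respect to $\sigma^{-1}$) $\eta_2$ and $\pi_1$ are s-bijective, $\eta_1$ and $\pi_2$ are u-bijective, and $\eta_1\times\eta_2 : (\Sigma,\sigma^{-1})\to\fib{(\Sigma_1,\sigma^{-1})}{\pi_1}{\pi_2}{(\Sigma_2,\sigma^{-1})}$ is a conjugacy. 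These are exactly the hypotheses of Theorem~\ref{mod3.5.11}.

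Applying Theorem~\ref{mod3.5.11} to this diagram yields
\[
\eta_2^{s}\circ\eta_1^{s*}=\pi_2^{s*}\circ\pi_1^{s}: D^s(\Sigma_1,\sigma^{-1})\to D^s(\Sigma_2,\sigma^{-1}),
\]
where every superscript is now taken with respect to $\sigma^{-1}$. It remains to transport this back to $\sigma$. By the constructions in \cite[Sections~2.2 and 3.3--3.5]{put}, unstable equivalence for $\sigma$ is stable equivalence for $\sigma^{-1}$, so $D^u(\Sigma_i,\sigma)=D^s(\Sigma_i,\sigma^{-1})$ canonically; a factor map is u-bijective for $\sigma$ exactly when it is s-bijective for $\sigma^{-1}$; and under the above identification the homomorphisms $\eta^s$ and $\eta^{s*}$ computed for $\sigma^{-1}$ become $\eta^u$ and $\eta^{u*}$ computed for $\sigma$ (compare parts (1)--(2) with parts (3)--(4) of Theorem~\ref{Krieger_functorial}). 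The displayed identity therefore becomes $\eta_2^{u}\circ\eta_1^{u*}=\pi_2^{u*}\circ\pi_1^{u}: D^u(\Sigma_1,\sigma)\to D^u(\Sigma_2,\sigma)$, as required.

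The only step that is not purely formal is this final identification: one must check, against Putnam's definitions, that Krieger's unstable dimension group and its two induced homomorphisms really are obtained from the stable ones by passing to $\sigma^{-1}$. This is essentially built into those definitions, but it should be verified before being invoked. An alternative that bypasses the identification entirely is to repeat Putnam's proof of \cite[Theorem~3.5.11]{put} verbatim, interchanging the words ``stable''/``unstable'' and ``s-bijective''/``u-bijective'' throughout — the route indicated by the remark that the proofs in this section are analogous to the stable case.
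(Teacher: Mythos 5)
Your proposal is correct, but it takes a different route from the paper. The paper simply omits the proof of this unstable statement, declaring it analogous to the stable case; that is, the intended argument is to re-run Putnam's proof of Theorem 3.5.11 (with the added conjugacy hypothesis) with the words stable/unstable and s-bijective/u-bijective interchanged throughout. You instead \emph{deduce} the unstable statement formally from the already-established stable Theorem \ref{mod3.5.11}: first you reduce $H^u_N$ to Krieger's $D^u$ for shifts of finite type via the trivial s/u-bijective pair (for which the fibre products in the conjugacy hypotheses are just graphs of maps, so the induced maps on $H^u_0$ are the Krieger maps and the higher groups vanish), and then you apply time reversal $\sigma\mapsto\sigma^{-1}$, which swaps stable with unstable and s-bijective with u-bijective while preserving non-wandering, commutativity, and the conjugacy of the product map. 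What your route buys is that no argument has to be duplicated and the s/u symmetry is made explicit; what it costs is the need to verify the identification $D^u(\Sigma,\sigma)\cong D^s(\Sigma,\sigma^{-1})$ intertwining $\pi^u,\pi^{u*}$ with $\pi^s,\pi^{s*}$ — true and essentially built into Putnam's definitions, as you note, but it is the load-bearing step and you rightly flag it (your closing alternative, repeating the proof with s/u interchanged, is exactly the paper's intended proof). Two minor points: the computation $H^u_0(\Sigma,\sigma)\cong D^u(\Sigma,\sigma)$ and vanishing in other degrees is Putnam's Theorem 5.1.12 (Section 5 of \cite{put}), not Section 3; and the reduction step should explicitly record that the maps $\eta^u,\eta^{u*}$ at the homology level, defined via the trivial pairs, coincide with the Krieger maps of Theorem \ref{Krieger_functorial} — which is immediate from the construction but is what makes the reduction legitimate.
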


\begin{thm}\label{prelim_u_naturality}
Let $\pi = (Y,\psi,\pi_s,Z,\zeta,\pi_u)$ and $\pi' = (Y',\psi',\pi'_s,Z',\zeta',\pi'_u)$ be s/u-bijective pairs for the non-wandering Smale spaces $(X,\phi)$ and $(X',\phi')$ respectively.  Let $\eta = (\eta_X,\eta_Y,\eta_Z)$ be a triple of s-bijective factor maps, or a triple of u-bijective factor maps.
\begin{align*}
\eta_X&: (X,\phi) \to (X',\phi') \\
\eta_Y&: (Y,\psi) \to (Y',\psi') \\
\eta_Z&: (Z,\zeta) \to (Z',\zeta')
\end{align*}
such that the following diagram commutes.
\[
\begin{CD}
(Y,\psi) @>>\pi_s> (X,\varphi) @<<\pi_u< (Z,\zeta)\\
@VV \eta_Y V @VV \eta_X V @VV \eta_Z V\\
(Y',\psi') @>> \pi'_s > (X',\varphi') @<<\pi'_u< (Z',\zeta').\\
\end{CD}
\]
If $\eta$ is s-bijective, we assume $\pi_u \times \eta_Z: (Z, \zeta) \to \fib{(X, \phi)}{\eta_X}{\pi_u'}{(Z', \zeta')}$ is a conjugacy.
Then, for each $L,M \geq 0$ 
\[
\begin{CD}
\Sigma_{L,M}(\pi) @>>\eta_{L,M}> \Sigma_{L,M}(\pi') \\
@VV \delta_{,m} V @VV \delta_{,m} V \\
\Sigma_{L,M-1}(\pi) @>> \eta_{L,M-1} > \Sigma_{L,M-1}(\pi') \\
\end{CD}
\]
is a commutative diagram and $\eta_{L,M} \times \delta_{,m}: \Sigma_{L,M}(\pi) \to \fib{\Sigma_{L,M}(\pi')}{\delta'_{'m}}{\eta_{L,M-1}}{\Sigma_{L,M-1}(\pi)}$ is a conjugacy.
In particular, $\eta$ induces a chain map on the double complexes used to define $H^u(\pi)$ and $H^u(\pi')$. Similarly, if $\eta$ is u-bijective, we assume $\pi_s \times \eta_Y: (Y, \psi) \to \fib{(X, \phi)}{\eta_X}{\pi_s'}{(Y', \psi')}$ is a conjugacy.
Then, for each $L,M \geq 0$ 
\[
\begin{CD}
\Sigma_{L,M}(\pi) @>>\delta_{l,} > \Sigma_{L-1,M}(\pi) \\
@VV \eta_{L,M} V @VV \eta_{L-1,M} V \\
\Sigma_{L,M}(\pi') @>> \delta_{l,} > \Sigma_{L-1,M}(\pi') \\
\end{CD}
\]
is a commutative diagram and $\delta_{l,} \times \eta_{L,M}: \Sigma_{L,M}(\pi) \to \fib{\Sigma_{L-1,M}(\pi)}{\eta_{L-1,M}}{\delta_{l,}}{\Sigma_{L,M}(\pi')}$ is a conjugacy.
In particular, $\eta$ induces a chain map on the double complexes used to define $H^u(\pi)$ and $H^u(\pi')$.
\end{thm}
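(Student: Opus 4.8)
The plan is to follow the proof of Theorem \ref{naturality} essentially verbatim, replacing one algebraic input at the very end. The point is that the proof of Theorem \ref{naturality} separates cleanly into a geometric part -- verifying that the displayed squares of shifts of finite type commute and that the indicated fibre-product maps are conjugacies -- and an algebraic part, in which those conjugacies are fed into the pullback lemma for Krieger's dimension groups to obtain the chain-map identity. The geometric part is phrased entirely in terms of the spaces $\Sigma_{L,M}(\pi)$, $\Sigma_{L,M}(\pi')$ and the structure maps $\eta_{L,M}$, $\delta_{l,}$, $\delta_{,m}$, none of which depend on whether one is building $H^s$ or $H^u$; so it carries over unchanged.

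Concretely, in the s-bijective branch I would reproduce the $L=M=1$ surjectivity and injectivity computations from the proof of Theorem \ref{naturality} word for word to show that
\[
\eta_{L,M} \times \delta_{,m}: \Sigma_{L,M}(\pi) \to \fib{\Sigma_{L,M}(\pi')}{\delta'_{,m}}{\eta_{L,M-1}}{\Sigma_{L,M-1}(\pi)}
\]
is a conjugacy, invoking the hypothesis that $\pi_u \times \eta_Z$ is a conjugacy exactly where the earlier proof did; commutativity of the square and the fact that the map intertwines the dynamics are immediate from the definitions of the face maps. In the u-bijective branch one repeats the analogous computation, exchanging the roles of the $\delta_{l,}$ and $\delta_{,m}$ maps and using that $\pi_s \times \eta_Y$ is a conjugacy, to see that $\delta_{l,} \times \eta_{L,M}$ is a conjugacy. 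The general $(L,M)$ cases are handled exactly as the special case, only with bulkier notation.

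The single genuine change is the final step. In the stable double complex the boundary maps are built from $\delta_{l,}^{s}$ and $\delta_{,m}^{s*}$, and Theorem \ref{mod3.5.11} supplies $\delta_{,m}^{s*}\circ\eta_{L,M-1}^{s}=\eta_{L,M}^{s}\circ\delta_{,m}^{s*}$; in the unstable double complex the boundary maps are instead built from $\delta_{,m}^{u}$ (since $\delta_{,m}$ is u-bijective, hence covariant on $D^u$ by Theorem \ref{Krieger_functorial}(2)) and $\delta_{l,}^{u*}$ (since $\delta_{l,}$ is s-bijective, hence contravariant on $D^u$ by Theorem \ref{Krieger_functorial}(4)), so the dual identity is required. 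I would apply the unstable pullback lemma (the first theorem of this section, the unstable analogue of Theorem \ref{mod3.5.11}) to the square
\[
\begin{CD}
\Sigma_{L,M}(\pi) @>>\eta_{L,M}> \Sigma_{L,M}(\pi') \\
@VV \delta_{,m} V @VV \delta'_{,m} V \\
\Sigma_{L,M-1}(\pi) @>> \eta_{L,M-1} > \Sigma_{L,M-1}(\pi') \\
\end{CD}
\]
-- whose vertical maps are u-bijective, whose horizontal maps are s-bijective, and whose fibre-product map we have just shown to be a conjugacy -- to obtain the commutation of $\eta_{L,M}^{u*}$ with $\delta_{,m}^{u}$, and likewise the companion identity for the $\delta_{l,}$ direction in the u-bijective branch. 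Assembling these over all $L,M$ shows that the family $\eta_{L,M}^{u*}$ (respectively $\eta_{L,M}^{u}$) commutes with the unstable boundary maps $d^u$, i.e.\ forms a chain map of the double complexes computing $H^u(\pi)$ and $H^u(\pi')$, which is the assertion; functoriality of the construction then follows exactly as in the proof of \cite[Theorem 5.4.1]{put}.

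I do not expect a serious obstacle here: the substantive content -- the conjugacy computations -- is already present in the proof of Theorem \ref{naturality}, and the only care needed is the bookkeeping of which of the two families of face maps becomes covariant and which becomes contravariant upon passing from $D^s$ to $D^u$, so that the correct variant of the pullback lemma is invoked in each branch. This is precisely why the authors are entitled to say the proof is analogous and omit it.
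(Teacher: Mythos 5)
Your proposal is correct and is exactly the argument the paper has in mind: the paper omits the proof as ``analogous'' to Theorem \ref{naturality}, and your plan reproduces the same conjugacy computations verbatim and then substitutes the unstable pullback lemma (together with the covariance/contravariance bookkeeping for $D^u$ from Theorem \ref{Krieger_functorial}) in place of Theorem \ref{mod3.5.11} at the final step. No gaps.
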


\begin{thm}\label{Hu_naturality}
Let $\eta_X:(X,\phi) \to (X',\phi')$ be a factor map between non-wandering Smale spaces.  Consider the following diagrams 
\[
\begin{CD}
(Y,\psi) @>>\pi_s> (X,\varphi) @<<\pi_u< (Z,\zeta)\\
@VV \eta_Y V @VV \eta_X V @VV \eta_Z V\\
(Y',\psi') @>> \pi'_s > (X',\varphi') @<<\pi'_u< (Z',\zeta')\\
\end{CD}
\ \ \textrm{and} \ \ 
\begin{CD}
(\tilde{Y},\tilde{\psi}) @>>\tilde{\pi}_s> (X,\varphi) @<<\tilde{\pi}_u< (\tilde{Z},\tilde{\zeta})\\
@VV \tilde{\eta}_Y V @VV \eta_X V @VV \tilde{\eta}_Z V\\
(\tilde{Y}',\tilde{\psi}') @>> \tilde{\pi}'_s > (X',\varphi') @<<\tilde{\pi}'_u< (\tilde{Z}',\tilde{\zeta}')\\
\end{CD}
\]
\begin{enumerate}
\item If $\eta_X$ is u-bijective, and the above diagrams represent two different sets of data which both satisfy the hypotheses of Theorem \ref{prelim_u_naturality}, then
\[
\Theta' \circ \eta^u = \tilde{\eta}^u \circ \Theta
\]
where $\eta^u$ is defined via the the first diagram (via $\eta = (\eta_X,\eta_Y,\eta_Z$)), $\tilde{\eta}^u$ is defined via the second diagram (via $\tilde{\eta} = (\eta_X,\tilde{\eta_Y},\tilde{\eta_Z}$)), and $\Theta': H^u(\pi') \to H^u(\tilde{\pi}')$ and $\Theta: H^u(\pi) \to H^u(\tilde{\pi})$ are the isomorphisms described in the proof of \cite[Theorem 5.5.1]{put}.
\item If $\eta_X$ is s-bijective, and the above diagrams represent two different sets of data which both satisfy the hypotheses of Theorem \ref{naturality}, then
\[
\Theta \circ \eta^{u*} = \tilde{\eta}^{u*} \circ \Theta'
\]
where $\eta^{u*}$ is defined via the the first diagram (via $\eta = (\eta_X,\eta_Y,\eta_Z$)), $\tilde{\eta}^{u*}$ is defined via the second diagram (via $\tilde{\eta} = (\eta_X,\tilde{\eta_Y},\tilde{\eta_Z}$)), and $\Theta': H^u(\pi') \to H^u(\tilde{\pi}')$ and $\Theta: H^u(\pi) \to H^u(\tilde{\pi})$ are the isomorphisms described in the proof of \cite[Theorem 5.5.1]{put}.
\end{enumerate}
\end{thm}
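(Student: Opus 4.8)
The plan is to obtain Theorem~\ref{Hu_naturality} by running the proof of Theorem~\ref{main_naturality} through the formal duality that interchanges ``stable'' and ``unstable'': every $s$-bijective map becomes $u$-bijective (and conversely), the $Y$- and $Z$-components of an s/u-bijective pair trade places, the induced maps $\pi^{s},\pi^{s*},\delta_{l,}^{s},\delta_{,m}^{s*}$ on dimension groups become $\pi^{u},\pi^{u*},\delta_{,m}^{u},\delta_{l,}^{u*}$, and Theorem~\ref{prelim_u_naturality} is invoked wherever the $H^s$ argument appeals to Theorem~\ref{naturality}. Since $\Theta$ and $\Theta'$ are assembled in \cite[Theorem~5.5.1]{put} by changing one component of the s/u-bijective pair at a time, I would first reduce, by transitivity, to the two special cases in which the $Y$-component is fixed and only the $Z$-component changes, and in which the $Z$-component is fixed and only the $Y$-component changes; establishing the asserted identity of induced maps in each of these two cases then suffices.

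For the genuinely substantive case --- for both parts~(1) and~(2) this is the one in which the $Z$-component is held fixed and the $Y$-component varies, since then the coordinate projections between the two families of ``$Y$-built'' shifts are $s$-bijective and hence act on $D^u$ in the wrong-way direction --- I would form the fibre product $\fib{(Y,\psi)}{\pi_s}{\tilde{\pi}_s}{(\tilde{Y},\tilde{\psi})}$ over $(X,\varphi)$ (and likewise over $(X',\varphi')$), use it as the $Y$-part of a third s/u-bijective pair dominating both, and check that the relevant comparison maps are conjugacies onto the pertinent fibre products. Each such check is a short surjectivity/injectivity computation of precisely the form of those around diagram~\eqref{Diag:CD_proofNat} in the proof of Theorem~\ref{main_naturality}: surjectivity uses that the right-hand squares of both data diagrams satisfy the conjugacy hypothesis of Theorem~\ref{prelim_u_naturality}, and injectivity uses it once more. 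The mirrors of Lemmas~\ref{Lem1main}--\ref{Lem2main}, together with \cite[Theorem~2.5.13]{put}, record which of the many projections involved are $s$- and which are $u$-bijective. This produces a map on $H^u$ factoring through the intermediate pair; the projections $P_1,P_1',P_2,P_2'$ then induce the isomorphisms of \cite[Theorem~5.5.1]{put}, and chasing the resulting squares of induced maps --- applying Theorem~\ref{mod3.5.11} at each bidegree $(L,M)$ exactly as in the proof of Theorem~\ref{main_thm}, together with the functoriality used in the proof of Theorem~\ref{main_naturality} --- yields $\Theta'\circ\eta^u=\tilde{\eta}^u\circ\Theta$ in part~(1) and $\Theta\circ\eta^{u*}=\tilde{\eta}^{u*}\circ\Theta'$ in part~(2). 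The complementary case, with the $Y$-component fixed and the $Z$-component varying, is the easy one: all maps relating the two choices of $Z$-component are then $u$-bijective, so one can appeal directly to functoriality (Theorem~\ref{Krieger_functorial} at the level of shifts) without constructing any further fibre products.

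I expect the main obstacle to be organizational rather than computational: in the $H^u$-analogue of the commutative cube~\eqref{3d_diagram} one must keep track, for each arrow, of whether it is $s$-bijective or $u$-bijective --- hence whether it induces a push-forward ($\pi^u$, $\delta_{,m}^{u}$) or a wrong-way map ($\pi^{u*}$, $\delta_{l,}^{u*}$) on the relevant dimension group --- and one must verify at every step that the instance of the conjugacy hypothesis of Theorem~\ref{prelim_u_naturality} that the argument actually needs (conjugacy of $\pi_s\times\eta_Y$ when the triple is $u$-bijective, conjugacy of $\pi_u\times\eta_Z$ when it is $s$-bijective) is the one supplied by the hypotheses. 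Once this bookkeeping is set up, the conjugacy verifications are routine repetitions of the arguments already carried out in Section~\ref{naturality_homology}, and the proof closes exactly as that of Theorem~\ref{main_naturality}.
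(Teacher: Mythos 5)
Your proposal is correct and takes essentially the same approach as the paper: the paper omits the proof of this theorem entirely, declaring it the stable/unstable dual of Theorem \ref{main_naturality}, and that dualization is exactly what you carry out, including the correct identification that the substantive case is the one where the $Y$-component varies (the s-bijective projections acting contravariantly on $D^u$) and the correct matching of the conjugacy hypotheses of Theorem \ref{prelim_u_naturality} to each case. The only cosmetic slips are citing the stable pullback lemma (Theorem \ref{mod3.5.11}) and the proof of Theorem \ref{main_thm} where the unstable analogue stated at the beginning of Section \ref{unstable_results} is the result actually invoked, and the remark that the easy case needs no further fibre products (the fibre product of the two varying components is still used to define $\Theta$, though no conjugacy verifications are needed there).
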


\begin{thm}\label{main_thm_u}
Suppose 
\[
\begin{CD}
(X,\varphi) @>>\eta_1> (X_1,\varphi_1) \\
@VV \eta_2 V @VV \pi_1 V \\
(X_2,\varphi_2) @>> \pi_2 > (X_0,\varphi_0) \\
\end{CD}
\]
is a commutative diagram of non-wandering Smale spaces in which $\eta_1$ and $\pi_2$ are s-bijective factor maps, and $\eta_2$ and $\pi_1$ are u-bijective factor maps. Moreover, suppose $\eta_2 \times \eta_1 : (X, \varphi) \to \fib{(X_2, \varphi_2)}{\pi_2}{\pi_1} {(X_1, \varphi_1)}$ is a conjugacy.
Then
\[
\eta_2^{u} \circ \eta_1^{u*} = \pi_2^{u*} \circ \pi_1^{u}: H_N^u(X_1,\varphi_1) \to H_N^u(X_2,\varphi_2). 
\]
\end{thm}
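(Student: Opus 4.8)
The plan is to run the proof of Theorem~\ref{main_thm} in the unstable picture, exchanging the roles of the stable and unstable structure throughout. First I would fix an arbitrary s/u-bijective pair $\rho_0=(Y_0,\psi_0,\rho^{YX}_0,Z_0,\zeta_0,\rho^{ZX}_0)$ for $(X_0,\varphi_0)$ and reuse, essentially verbatim, the constructions of Lemmas~\ref{Lem1main}, \ref{Lem2main}, \ref{Lem3main}, \ref{Lem4main} together with their u-part analogues. This produces s/u-bijective pairs $\rho_1$, $\rho_2$, and $\rho$ for $(X_1,\varphi_1)$, $(X_2,\varphi_2)$, and $(X,\varphi)$, respectively, and assembles the associated maps into the commutative cube~\eqref{3d_diagram}. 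None of this part references the homology theory, so it transfers without change; as in the stable case, the construction also simplifies considerably in the special situation where $(X_2,\varphi_2,\pi_2,X_1,\varphi_1,\pi_1)$ is itself an s/u-bijective pair for $(X_0,\varphi_0)$, in which case $(X,\varphi)$ sits in the lower left corner of the double complex of $(X_0,\varphi_0)$.

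Next I would form, for each $L,M\ge 0$, the shifts of finite type $\Sigma_{L,M}(\rho)$, $\Sigma_{L,M}(\rho_0)$, $\Sigma_{L,M}(\rho_1)$, $\Sigma_{L,M}(\rho_2)$ and the maps $(\eta^{\Sigma}_1)_{L,M}$, $(\eta^{\Sigma}_2)_{L,M}$, $(\pi^{\Sigma}_1)_{L,M}$, $(\pi^{\Sigma}_2)_{L,M}$ exactly as in the proof of Theorem~\ref{main_thm}, and invoke the lemma there showing that $(\eta^{\Sigma}_1)_{L,M}$ and $(\pi^{\Sigma}_2)_{L,M}$ are s-bijective, $(\eta^{\Sigma}_2)_{L,M}$ and $(\pi^{\Sigma}_1)_{L,M}$ are u-bijective, the resulting square commutes, and $(\eta^{\Sigma}_2)_{L,M}\times(\eta^{\Sigma}_1)_{L,M}$ is a conjugacy onto the relevant fibre product. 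That lemma is purely a statement about Smale spaces and their maps and is insensitive to the subsequent choice of $D^s$ versus $D^u$, so it applies unchanged. Applying the unstable analogue of Theorem~\ref{mod3.5.11} recorded at the start of this section to this square then gives, for every $L,M\ge 0$,
\[
(\eta_2^{\Sigma})^{u}_{L,M}\circ(\eta_1^{\Sigma})^{u*}_{L,M}=(\pi_2^{\Sigma})^{u*}_{L,M}\circ(\pi_1^{\Sigma})^{u}_{L,M}:D^u(\Sigma_{L,M}(\rho_1),\sigma)\to D^u(\Sigma_{L,M}(\rho_2),\sigma).
\]

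To pass from this $D^u$-level identity to the claimed identity on $H^u_N$ I would use Theorem~\ref{prelim_u_naturality} and Theorem~\ref{Hu_naturality}: the former guarantees that the families above descend to chain maps on the double complexes computing $H^u_*(\rho_i)$, and the latter guarantees that the identifications $H^u_*(\rho_i)\cong H^u_*(X_i,\varphi_i)$ intertwine these chain maps with $\eta_1^{u*}$, $\eta_2^{u}$, $\pi_1^{u}$, $\pi_2^{u*}$ independently of the choices of auxiliary pairs, so that the displayed equality descends to $\eta_2^{u}\circ\eta_1^{u*}=\pi_2^{u*}\circ\pi_1^{u}$ on $H^u_N$. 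The step I expect to require the most care — and the only substantive difference from the stable argument — is the variance bookkeeping: on $H^u$ the s-bijective maps $\eta_1$ and $\pi_2$ induce the contravariant maps $(-)^{u*}$ while the u-bijective maps $\eta_2$ and $\pi_1$ induce the covariant maps $(-)^{u}$, so the order of composition in the identities above, and the directions of the isomorphisms $\Theta,\Theta'$ furnished by Theorem~\ref{Hu_naturality}, are reversed relative to their stable counterparts; the remaining work is checking that these reversals are mutually consistent, after which the argument closes exactly as in the proof of Theorem~\ref{main_thm}.
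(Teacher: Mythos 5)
Your proposal is correct and matches the paper's intent exactly: the paper omits the proof of this theorem, stating only that the unstable results are proved by arguments analogous to the stable ones, and your plan — reusing the cube construction and the $\Sigma_{L,M}$-level lemma verbatim, applying the unstable analogue of Theorem \ref{mod3.5.11}, and descending to $H^u_N$ via Theorems \ref{prelim_u_naturality} and \ref{Hu_naturality} with the variance of s- and u-bijective maps reversed — is precisely that analogous argument, including the correct direction bookkeeping $\eta_2^{u}\circ\eta_1^{u*}=\pi_2^{u*}\circ\pi_1^{u}:H^u_N(X_1,\varphi_1)\to H^u_N(X_2,\varphi_2)$.
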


\section{Examples}\label{examples}

In this section we provide examples showing the extent to which the conclusions of the theorems in this paper can fail without the hypotheses we have imposed.

Our first example shows that it is necessary that the map $\eta_2 \times \eta_1$ in Theorem \ref{mod3.5.11} is a conjugacy.

\begin{example}\label{2to1example}
Let $G$ and $H$ be the following directed graphs
\begin{center}
\begin{tikzpicture}
\node at (-2.5,0) {$G:$};
\node at (0,0) {$v_1$};
\node[vertex] (vertexe) at (0,0)   {$\quad$}
	edge [->,>=latex,out=140,in=220,loop,thick] node[left,pos=0.5]{$a_1$} (vertexg);
\node at (3,0) {$v_2$};
\node[vertex] (vertexa) at (3,0)  {$\quad$}
	edge [<-,>=latex,out=150,in=30,thick] node[auto,swap,pos=0.5]{$b_1$} (vertexe)
	edge [->,>=latex,out=210,in=330,thick] node[auto,pos=0.5]{$b_2$} (vertexe)
	edge [->,>=latex,out=40,in=320,loop,thick] node[auto,pos=0.5]{$a_2$} (vertexg);
\node at (7.5,0) {$H:$};
\node at (10,0) {$v$};
\node[vertex] (vertexv) at (10,0)   {$\quad$}
	edge [->,>=latex,out=40,in=320,loop,thick] node[auto,pos=0.5]{$b$} (vertexg)
	edge [->,>=latex,out=140,in=220,loop,thick] node[left,pos=0.5]{$a$} (vertexg);
\end{tikzpicture}
\end{center}
Suppose $\pi$ is the graph homomorphism which sends $v_1,v_2 \mapsto v$, $a_1,a_2 \mapsto a$, and $b_1,b_2 \mapsto b$.  Routine computations show that $\pi$ is both left-covering and right-covering, see \cite[Definition 2.5.16]{put}.  Abusing notation we also let $\pi$ denote the factor map $\pi: (\Sigma_G,\sigma) \to (\Sigma_H, \sigma)$.  It follows from \cite[Theorem 2.5.17]{put} that $\pi$ is both s-bijective and u-bijective. Thus, the diagram
\[
\begin{CD}
(\Sigma_G, \sigma) @>>\pi> (\Sigma_H, \sigma) \\
@VV \pi V @VV id V \\
(\Sigma_H, \sigma) @>> id > (\Sigma_H, \sigma)\\
\end{CD}
\]
satisfies all the hypotheses of Theorem \ref{mod3.5.11} except that $\pi \times \pi$ is not 1-to-1; it is 2-to-1. In this case $\pi^s \circ \pi^{s*}: D^s(\Sigma_H,\sigma) \to D^s(\Sigma_H,\sigma)$ is multiplication by 2, while $id^{s*}\circ id^s$ is the identity.

On the other hand, the diagram
\[
\begin{CD}
(\Sigma_G, \sigma) @>>id> (\Sigma_G, \sigma) \\
@VV id V @VV \pi V \\
(\Sigma_G, \sigma) @>> \pi > (\Sigma_H, \sigma)\\
\end{CD}
\]
satisfies all of the hypotheses of Theorem \ref{mod3.5.11} \emph{except} that $\pi \times \pi$ is not onto. In this case $id^s \circ id^{s*}:  D^s(\Sigma_G,\sigma) \to D^s(\Sigma_G,\sigma)$ is the identity, while $\pi^{s*} \circ \pi^s$ is multiplication by 2.
\end{example}

In the next example we modify the previous example slightly to show what can go wrong in Theorem \ref{naturality} if the product map is not a conjugacy.

\begin{example}
Let $G$, $H$, and $\pi$ be as in Example \ref{2to1example}, and consider the following commutative diagram.
\[
\begin{CD}
(\Sigma_H,\sigma) @>> id > (\Sigma_H,\sigma) @<< \pi < (\Sigma_G,\sigma)\\
@VV id V @VV id V @VV \pi V\\
(\Sigma_H,\sigma) @>> id > (\Sigma_H,\sigma) @<< id < (\Sigma_H,\sigma).\\
\end{CD}
\]
In the notation of Theorem \ref{naturality}, we have s/u-bijective pairs $\pi = (\Sigma_H,\sigma, id, \Sigma_G, \sigma, \pi)$ and $\pi' = (\Sigma_H,\sigma, id, \Sigma_H, \sigma, id)$, and the triple of s-bijective maps $\eta = (id, id, \pi)$. This data satisfies all the hypotheses of Theorem \ref{naturality} \emph{except} that $\pi \times \pi$ is not 1-to-1.
It is then straightforward to verify that in the diagram
\[
\begin{CD}
\Sigma_{0,1}(\pi) @>>\eta_{0,1}> \Sigma_{0,1}(\pi') \\
@VV \delta_{,1} V @VV \delta_{,1} V \\
\Sigma_{0,0}(\pi) @>> \eta_{0,0} > \Sigma_{0,0}(\pi') \\
\end{CD}
\]
the map $\delta_{'1} \times \eta_{0,1}$ is not 1-to-1, and hence $\eta^s: H^s(\pi) \to H^s(\pi')$ is not well defined. 
\end{example}

The previous example also lead to an example showing that, in considering the naturality of the Homology theory, one must be careful in the choice of s/u-bijective pairs (as in Theorem \ref{main_naturality}).

\begin{example}
Let 
\[
\begin{CD}
(\Sigma_G,\sigma) @>> id > (\Sigma_G,\sigma) @<< id < (\Sigma_G,\sigma)\\
@VV \pi V @VV \pi V @VV id V\\
(\Sigma_H,\sigma) @>> id > (\Sigma_H,\sigma) @<<\pi< (\Sigma_G,\sigma)\\
\end{CD}
\ \ \textrm{and} \ \ 
\begin{CD}
(\Sigma_G,\sigma) @>> id > (\Sigma_G,\sigma) @<< id < (\Sigma_G,\sigma)\\
@VV \pi V @VV \pi V @VV \pi V\\
(\Sigma_H,\sigma) @>> id > (\Sigma_H,\sigma) @<< id < (\Sigma_H,\sigma)\\
\end{CD}
\]
be two sets of data as in the statement of Theorem \ref{main_naturality}, but notice that the hypotheses are not fully satisfied, for example, $id \times id: (\Sigma_G,\sigma) \to \fib{(\Sigma_G,\sigma)}{\pi}{\pi}{(\Sigma_G,\sigma)}$ is not onto.

In this case, the ``isomorphism'' $\Theta'$ which appears in Theorem \ref{main_naturality} is the ``map'' on homology in the previous example, and is thus not well defined.
\end{example}

\end{document}